

\documentclass[final,3p]{elsarticle}

\usepackage{amsmath,amsthm,amssymb,mathrsfs}
\usepackage{enumerate}
\usepackage{slashed}
\usepackage{txfonts,dsfont}
\usepackage{aliascnt}
\usepackage[breaklinks,colorlinks]{hyperref}


\makeatletter
\newcommand{\autorefcheckize}[1]{%
  \expandafter\let\csname @@\string#1\endcsname#1%
  \expandafter\DeclareRobustCommand\csname relax\string#1\endcsname[1]{%
    \csname @@\string#1\endcsname{##1}\wrtusdrf{##1}}%
  \expandafter\let\expandafter#1\csname relax\string#1\endcsname
}
\makeatother

\theoremstyle{plain}

\newtheorem{theorem}{Theorem}[section]

\newaliascnt{lem}{theorem}
\newtheorem{lem}[lem]{Lemma}
 \aliascntresetthe{lem}

\newaliascnt{cor}{theorem}
\newtheorem{cor}[cor]{Corollary}
 \aliascntresetthe{cor}

\newaliascnt{prop}{theorem}
\newtheorem{prop}[prop]{Proposition}
 \aliascntresetthe{prop}

\newtheorem{known}{Theorem}

\newtheorem{conj}{Conjecture}

\theoremstyle{remark}

\newtheorem{rem}{Remark}[section]

\theoremstyle{definition}
\newtheorem{defn}{Definition}[section]

\numberwithin{equation}{section}

\newcommand{\abs}[1]{\left\lvert#1\right\rvert}
\newcommand{\set}[1]{\left\{#1\right\}}
\newcommand{\hin}[2]{\left\langle#1,#2\right\rangle}

\newcommand*{\To}{\longrightarrow}
\newcommand*{\Rmn}[1]{\uppercase\expandafter{\romannueral#1}}
\newcommand*{\dif}{\mathop{}\!\mathrm{d}}

\DeclareMathOperator{\Div}{div}

\journal{arXiv}

\allowdisplaybreaks

\begin{document}

\begin{frontmatter}

\title{Rigidity of closed CSL submanifolds in the unit sphere \tnoteref{LS}}

\author[whu]{Yong Luo}
\ead{yongluo@whu.edu.cn}

\author[whu]{Linlin Sun \corref{sll1}}
\ead{sunll@whu.edu.cn}

\tnotetext[LS]{Yong Luo was supported by the National Natural Science Foundation of China (Grant Nos. 11501421, 11771339). Linlin Sun was supported by the National Natural Science Foundation of China (Grant No. 11801420)  and Fundamental Research Funds for the Central Universities (Grant No. 2042018kf0044).}
\address[whu]{School of Mathematics and Statistics \& Computational Science Hubei Key Laboratory, Wuhan University, 430072 Wuhan, China}

\cortext[sll1]{Corresponding author.}

\begin{abstract}
A contact stationary Legendrian submanifold (briefly, CSL submanifold) is a stationary point of the volume functional of Legendrian submanifolds in a Sasakian manifold. Much effort has been paid in the last two decades to construct examples of such manifolds, mainly by geometers using various geometric methods. But we have rare knowledge about their geometric properties till now. Recently, Y. Luo (\cite{ Luo2, Luo1}) proved that a closed CSL surface in $\mathbb{S}^5$ with the square length of its second fundamental form belonging to $[0,2]$ must be totally geodesic or be a flat minimal Legendrian torus, which generalizes a related gap theorem of minimal Legendrian surface due to Yamaguchi et al. (\cite{YKM}). In this paper, we will study the general dimensional case of this result.

\end{abstract}

\begin{keyword}
53C24 \sep 53C40

 \MSC[2010] gap theorem \sep contact Legendrian submanifolds

\end{keyword}

\end{frontmatter}


\section{Intruduction}
\subsection{CSL submanifolds in a Sasakian manifold}
Let $(\bar M^{2n+1},\bar\alpha, \bar g_{\bar\alpha},\bar J)$ be a $(2n+1)$-dimensional contact metric manifold with contact structure $\bar\alpha$, associated metric $\bar g_{\bar\alpha}$ and almost complex structure $\bar J$. Assume that $(M,g)$ is an $n$-dimensional  compact Legendrian submanifold of $\bar M^{2n+1}$ with the metric $g$ induced from $\bar g_{\bar\alpha}$. The volume of $M$ is defined by $V(M)\coloneqq\int_{M}\dif\mu_{g},$ where $\dif\mu_{g}$ is the volume form of $g$. A \emph{contact stationary Legendrian submanifold} (briefly, CSL submanifold) of $\bar M^{2n+1}$ is a Legendrian submanifold of $\bar M^{2n+1}$ which is a stationary point of $V$ with respect to contact deformations. In other words, a CSL submanifold is a stationary point of variation of the volume functional among Legendrian submanifolds. The Euler-Lagrange equation for a CSL submanifold $M$ is (\cite{CLU, Ir})
\begin{align*}
\Div_g(\bar J\mathbf{H})=0,
\end{align*}
where $\Div_g$ is the divergence operator with respect to $g$ and $\mathbf{H}$ is the mean curvature vector of $M$ in $\bar M^{2n+1}.$

\begin{rem}
The notion of CSL submanifold was first defined by Iriyeh in \cite{Ir}  and Castro et al. in  \cite{CLU} independently, where they used the name of Legendrian minimal Legendrian submanifold and contact minimal Legendrian submanifold, respectively. In this paper we prefer to use the name of CSL submanifold.
\end{rem}

 The study of CSL submanifolds was motivated by the study of Hamiltonian minimal Lagrangian (briefly, HSL) submanifolds, which was first studied by Ou (\cite{Oh90, Oh93}). A HSL submanifold in a K\"ahler manifold is a Lagrangian submanifold which is a stationary point of the volume functional under Hamiltonian deformations. By \cite{Re}, Legendrian submanifolds in a Sasakian manifold $\bar M^{2n+1}$ can be seem as links of Lagrangian submanifolds in the cone $C\bar M^{2n+1}$, which is a K\"ahler manifold with proper metric and complex structure. In fact, a close relation between CSL submanifolds and HSL  submanifolds was found by Iriyeh \cite{Ir} and Castro et al. \cite{CLU}. Precisely, they independently proved that $C(M)$ is a HSL submanifold in $\mathbb{C}^n(n\geq2)$ if and only if $M$ is a CSL submanifold in $\mathbb{S}^{2n-1}$ and $M$ is a CSL submanifold in $\mathbb{S}^{2n+1}(n\geq1)$ if and only if $\Pi(M)$ is a HSL submanifold in $\mathbb{CP}^n$, where $\Pi:\mathbb{S}^{2n+1}\to \mathbb{CP}^n$ is the Hopf fibration.

From the definition we see that CSL submanifolds are natural generalization of minimal Legendrian submanifolds.  The study of (nonminimal) CSL submanifolds of $\mathbb{S}^{2n+1}$ was relatively recent endeavor. For $n=1$, by \cite{Ir}, CSL curves in $\mathbb{S}^3$ are the so called $(p,q)$ curves discovered by Schoen and Wolfson in \cite{SW}, where $p,q$ are relatively prime integers. For $n=2$, since harmonic 1-form on a 2-sphere  must be trivial, CSL 2-sphere in $\mathbb{S}^5$ must be minimal and so must be the equatorial 2-spheres by Yau's result (\cite{Yau}). There are a lot of contact stationary doubly periodic surfaces form $\mathbb{R}^2$ to $\mathbb{S}^5$ by lifting H\'elein and Romon's examples (\cite{HR02}) and more CSL surfaces (mainly tori) were constructed in \cite{BuC, Ha, HR05, Ir, Ma, MaS, Mi03, Mi08} etc.. And for general dimension,  examples were constructed in \cite{Bu, CHX, Do, DoH, JLS, Lee, Mi04, Oh93} etc..
\subsection{Gap phenomenon of closed minimal submanifolds in the unit sphere}
In the theory of minimal submanifolds, the following Simons' integral inequality and
pinching theorem due to Simons (\cite{Si}), Lawson (\cite{La}) and Chern et al. (\cite{CCK}) are well-known.
\begin{known}[Simons, Lawson, Chern-Do Carmo-Kobayashi]
Let $M^n$ be a compact minimal submanifold in a unit sphere $\mathbb{S}^{n+p}$ and $\mathbf{B}$ the second fundamental form of $M$ in $\mathbb{S}^{n+p}$. Then we have
\begin{align*}
\int_M\abs{\mathbf{B}}^2\left(\dfrac{n}{2-\frac{1}{p}}-\abs{\mathbf{B}}^2\right)d\mu\leq0.
\end{align*}
In particular, if $0\leq \abs{\mathbf{B}}^2\leq \frac{n}{2-\frac{1}{p}},$
then either $\abs{\mathbf{B}}^2=0$ or $\abs{\mathbf{B}}^2=\frac{n}{2-\frac{1}{p}}$ and $M$ is the Clifford hypersurface or the Veronese surface in $\mathbb{S}^4$.
\end{known}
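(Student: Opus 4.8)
The natural approach is the Bochner--Simons method. The plan is to compute the Laplacian of the pointwise density $S\coloneqq\abs{\mathbf{B}}^2$ and then integrate over the closed manifold $M$. Fix a local orthonormal frame, write $h_{ij}^\alpha$ for the components of $\mathbf{B}$ with $i,j$ tangent and $\alpha=1,\dots,p$ normal, and let $A_\alpha=(h_{ij}^\alpha)$ be the associated shape operators; minimality gives $\trace A_\alpha=0$, so each $A_\alpha$ is symmetric and traceless. Starting from the Weitzenb\"ock-type identity $\tfrac12\Delta S=\abs{\nabla\mathbf{B}}^2+\sum_{\alpha,i,j}h_{ij}^\alpha\,\Delta h_{ij}^\alpha$ and using the Codazzi equation to commute covariant derivatives together with the fact that $\mathbb{S}^{n+p}$ has constant sectional curvature $1$, the first step is to establish Simons' identity
\begin{align*}
\frac12\Delta S=\abs{\nabla\mathbf{B}}^2+nS-\sum_{\alpha,\beta}\abs{[A_\alpha,A_\beta]}^2-\sum_{\alpha,\beta}\bigl(\trace(A_\alpha A_\beta)\bigr)^2,
\end{align*}
where $[A_\alpha,A_\beta]=A_\alpha A_\beta-A_\beta A_\alpha$.

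The second ingredient is the purely algebraic estimate of Chern--do Carmo--Kobayashi: for symmetric traceless $n\times n$ matrices $A_1,\dots,A_p$ one has
\begin{align*}
\sum_{\alpha,\beta}\abs{[A_\alpha,A_\beta]}^2+\sum_{\alpha,\beta}\bigl(\trace(A_\alpha A_\beta)\bigr)^2\leq\left(2-\frac1p\right)S^2,\qquad S=\sum_\alpha\trace(A_\alpha^2).
\end{align*}
I would prove this by first treating $p=1$ (where the commutator term vanishes and both sides reduce to $S^2$), and for $p\ge2$ by combining the commutator bound $\abs{[A_\alpha,A_\beta]}^2\le2\abs{A_\alpha}^2\abs{A_\beta}^2$ with an estimate of the Gram matrix $\bigl(\trace(A_\alpha A_\beta)\bigr)_{\alpha,\beta}$, balancing the two sums so as to extract the factor $2-\frac1p$.

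Combining the two, since $M$ is closed the divergence theorem gives $\int_M\Delta S=0$, so substituting the algebraic bound into the integrated Simons identity yields
\begin{align*}
\int_M\abs{\nabla\mathbf{B}}^2+\left(2-\frac1p\right)\int_M S\left(\frac{n}{2-\frac1p}-S\right)\leq0.
\end{align*}
Discarding the nonnegative gradient term gives the asserted integral inequality. For the rigidity statement, assume $0\le S\le\frac{n}{2-1/p}$ pointwise; then the integrand $S\bigl(\tfrac{n}{2-1/p}-S\bigr)$ is nonnegative yet has nonpositive integral, so it vanishes identically and, simultaneously, $\nabla\mathbf{B}\equiv0$. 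At each point $S=0$ or $S=\frac{n}{2-1/p}$, and continuity together with connectedness of $M$ forces one of these globally: $S\equiv0$ means $M$ is totally geodesic.

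In the borderline case $S\equiv\frac{n}{2-1/p}$ all the inequalities above are equalities, so the second fundamental form is parallel and the Chern--do Carmo--Kobayashi bound is saturated at every point. The main obstacle is precisely this classification step. One analyzes the equality case of the algebraic inequality by simultaneous diagonalization, which shows that saturation forces either $p=1$, whence $S\equiv n$ and the parallel second fundamental form together with the eigenvalue structure identifies $M$ as a Clifford minimal hypersurface $S^k(\sqrt{k/n})\times S^{n-k}(\sqrt{(n-k)/n})$; or $n=p=2$, whence $S\equiv\frac43$ and $M$ is the Veronese surface in $\mathbb{S}^4$. Establishing that the parallel second fundamental form genuinely yields these homogeneous examples --- especially recognizing the Veronese embedding from its curvature data --- is the delicate part, typically completed via the theory of parallel submanifolds or a direct reconstruction of the immersion.
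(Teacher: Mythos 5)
The paper does not prove this statement: it is quoted as Theorem A, a known result, with the proof deferred to the cited references of Simons, Lawson, and Chern--do Carmo--Kobayashi. (The closest material in the paper is the Legendrian Simons' identity in Section 2 and the estimate $\tfrac12\Delta\abs{\sigma}^2\geq\abs{\nabla\sigma}^2+(n+1)\abs{\sigma}^2-\tfrac32\abs{\sigma}^4$ used in the proof of Theorem 5.2, which is the analogous argument in the Legendrian setting.) Your proposal reconstructs the standard argument of the original sources correctly: Simons' identity, the algebraic lemma, integration over the closed manifold, and the equality analysis. One caveat on the algebraic step: bounding the commutator sum by $2\sum_{\alpha\neq\beta}N(A_\alpha)N(A_\beta)$ and the Gram term by $S^2$ \emph{separately} only yields $\bigl(3-\tfrac2p\bigr)S^2$, which is too weak. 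The factor $2-\tfrac1p$ requires first rotating the normal frame so that the Gram matrix $\bigl(\trace(A_\alpha A_\beta)\bigr)$ is diagonal, whence that term equals $\sum_\alpha N(A_\alpha)^2$ exactly and Cauchy--Schwarz $\sum_\alpha N(A_\alpha)^2\geq S^2/p$ finishes; your phrase ``balancing the two sums'' gestures at this but the diagonalization is the essential trick and should be made explicit. The equality discussion is likewise right in outline: for $p\geq2$ saturation of Cauchy--Schwarz forces all $N(A_\alpha)$ equal and nonzero, and saturation of the commutator bound for two nonzero shape operators forces $n=2$, leading to the Veronese surface, while $p=1$ gives $S\equiv n$ and the Clifford hypersurfaces.
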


\begin{rem}
Usually we call the number $\frac{n}{2-\frac{1}{p}}$ the first gap of minimal submanifolds in a sphere because Chern conjectured that the set of numbers which are the square length of second fundamental form of compact minimal submanifolds in a sphere is discrete.
\end{rem}

From the classification of compact minimal submanifolds with $\abs{\mathbf{B}}^2=\frac{n}{2-\frac{1}{p}}$ we see that the first gap is not optimal except when $p=1$ or $n=p=2$.
It is interesting to sharpen the first gap for other cases of higher codimension.
In this direction there are many studies by several authors (see \cite{BKSS, Ga, Sh}) and finally Li-Li (\cite{LiLi}) and Chen-Xu (\cite{CX})
independently proved the following theorem.
\begin{known}[Li-Li, Chen-Xu]\label{thm:Li-Li}
Let $M^n$ be a compact minimal submanifold in a unit sphere. Assume that $\abs{\mathbf{B}}^2\leq\frac{2n}{3}$, then either $\abs{\mathbf{B}}^2=0$ and $M$ is totally geodesic or $n=2, \abs{\mathbf{B}}^2=\frac{4}{3}$ and $M$ is the Veronese surface in $\mathbb{S}^4$.
\end{known}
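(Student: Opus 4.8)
Since this is a classification theorem quoted from the literature, I sketch the standard Simons-type proof due to Li--Li and Chen--Xu. Write $S\coloneqq\abs{\mathbf{B}}^2$ and let $A_\alpha$ ($1\le\alpha\le p$) denote the shape operators, which are trace-free symmetric endomorphisms because $M$ is minimal. The first observation is that for codimension $p\le 2$ nothing new is needed: since $\frac{2n}{3}\le\frac{n}{2-\frac1p}$, with equality exactly when $p=2$, the hypothesis $S\le\frac{2n}{3}$ already places us inside the Simons--Lawson--Chern--do~Carmo--Kobayashi theorem quoted above. For $p=1$ it forces $S=0$, because the Clifford value $S=\frac{n}{2-1}=n$ is excluded by $\frac{2n}{3}<n$; for $p=2$ it forces either $S=0$ or $S=\frac{2n}{3}$ with $M$ a Clifford hypersurface or the Veronese surface, and since the Clifford case is a hypersurface ($p=1$) it must be the Veronese surface, which then has $n=2$ and $S=\frac43$. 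Thus the genuine work lies in codimension $p\ge 3$, where the classical constant $2-\frac1p$ is not sharp enough.

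For $p\ge 3$ the plan is to refine the algebraic term in Simons' identity
\[
\frac12\Delta S=\abs{\nabla\mathbf{B}}^2+nS-\left(\sum_{\alpha,\beta}\norm{[A_\alpha,A_\beta]}^2+\sum_{\alpha,\beta}\left(\trace(A_\alpha A_\beta)\right)^2\right).
\]
The decisive step is the sharp pointwise inequality
\[
\sum_{\alpha,\beta}\norm{[A_\alpha,A_\beta]}^2+\sum_{\alpha,\beta}\left(\trace(A_\alpha A_\beta)\right)^2\le\tfrac32 S^2,
\]
which strictly improves the constant $2-\frac1p$ for every $p\ge 3$. I would prove it by diagonalizing the positive semidefinite Gram matrix $\bigl(\trace(A_\alpha A_\beta)\bigr)$, bounding the second sum by $S^2$ through the elementary spectral estimate $\sum_i\mu_i^2\le(\sum_i\mu_i)^2=S^2$, and controlling the commutator sum via the sharp estimate $\norm{[A,B]}^2\le 2\norm{A}^2\norm{B}^2$ coupled to a joint optimization over the admissible configurations of the $A_\alpha$. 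Alongside the inequality I must track its equality case: equality would force a rigid Veronese-type configuration of the shape operators, and the crucial point for this codimension range is to verify that such a configuration cannot occur once $p\ge 3$.

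Finally, integrating the identity over the closed manifold annihilates the left-hand side, and the sharp inequality together with $S\le\frac{2n}{3}$ gives $0\ge\int_M\bigl(\abs{\nabla\mathbf{B}}^2+S(n-\frac32 S)\bigr)\,d\mu$ with a pointwise nonnegative integrand; hence $\nabla\mathbf{B}\equiv 0$ and $S\in\{0,\frac{2n}{3}\}$ at every point, so $S$ is constant by connectedness. Since for $p\ge 3$ the value $S=\frac{2n}{3}>0$ would force equality in the sharp inequality and thus the excluded Veronese configuration, we conclude $S\equiv 0$, i.e. $M$ is totally geodesic. Combining this with the $p\le 2$ discussion yields the stated dichotomy. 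I expect the only genuinely hard part to be the sharp algebraic inequality together with the verification that its equality case is realized solely by the $n=p=2$ Veronese data; the analytic ingredients (Simons' formula and integration by parts on the closed manifold) are entirely routine.
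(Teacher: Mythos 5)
The paper does not prove this statement at all: it is quoted as a known theorem with a pointer to Li--Li and Chen--Xu, so there is no internal proof to measure you against. Your outline does follow the route of those papers (Simons' identity plus a sharpened algebraic estimate on the shape operators), and your reduction of the cases $p=1,2$ to the Simons--Lawson--Chern--do Carmo--Kobayashi theorem is correct.

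For $p\ge3$, however, your sketch has two genuine gaps. The decisive inequality
\begin{align*}
\sum_{\alpha,\beta}\norm{[A_\alpha,A_\beta]}^2+\sum_{\alpha,\beta}\left(\trace(A_\alpha A_\beta)\right)^2\le\dfrac32\,S^2
\end{align*}
is exactly the main lemma of Li--Li, and the method you indicate does not produce it: combining $\norm{[A,B]}^2\le2\norm{A}^2\norm{B}^2$ with the Gram-matrix estimate gives at best $\left(2-\frac1p\right)S^2$ (in a frame diagonalizing $\left(\trace(A_\alpha A_\beta)\right)$), i.e.\ nothing beyond the classical constant, which exceeds $\frac32 S^2$ for every $p\ge2$. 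Reaching $\frac32$ requires a genuinely coupled estimate of the two sums --- this is the entire content of Li--Li's paper --- and ``a joint optimization over the admissible configurations'' names that problem rather than solving it. Second, your claim that the equality configuration cannot occur once $p\ge3$ is false: equality with $S>0$ forces exactly two nonzero shape operators forming the $2\times2$ Veronese pair with all others vanishing, and this is realized for every $p\ge2$ by the Veronese surface sitting in a totally geodesic $\mathbb{S}^4\subseteq\mathbb{S}^{2+p}$. Consequently you cannot conclude $S\equiv0$ for $p\ge3$; the correct output of the equality analysis is $n=2$ and $M$ the Veronese surface, which happens to be the second alternative the theorem allows, so the statement survives, but the reasoning you give for the final step is wrong and the hard lemma it rests on is left unproved.
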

\begin{rem}
\autoref{thm:Li-Li} was generalized a little bit by Lu in \cite{Lu}.
\end{rem}
Legendrian submanifold is a special class of submanifolds, whose tangent bundle is isometric with its normal bundle.  Hence one may hope to solve the first gap problem for such class of submanifolds. This was done when $n=2$ (see \cite{YKM}).
 \begin{known}[Yamaguchi-Kon-Miyahara]\label{legendrian mimimal}
 If $\Sigma$ is a closed minimal Legendrian surface of the unit sphere $\mathbb{S}^5$ and $0\leq \abs{\mathbf{B}}^2\leq 2$, then $\abs{\mathbf{B}}^2$ is identically 0 or 2.
 \end{known}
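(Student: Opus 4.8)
The plan is to exploit the fact that, for a Legendrian surface, the entire second fundamental form is encoded in a single totally symmetric, trace-free cubic form which, for a \emph{minimal} Legendrian surface, becomes (the real part of) a holomorphic cubic differential. The gap will then follow from the Gauss equation together with the maximum principle, without appealing to an integral Simons inequality.

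First I would record the pointwise algebraic structure. Writing $\bar J$ for the almost complex structure and $\xi$ for the Reeb field, the Sasakian identity $\bar\nabla_X(\bar J W)=\bar J\bar\nabla_X W+\bar g_{\bar\alpha}(X,W)\xi$ (valid for $W\perp\xi$) shows, after a short computation using $\langle Y,\xi\rangle=0=\langle \bar JY,Z\rangle$ for $X,Y,Z$ tangent to $\Sigma$, that the cubic form
\[
h(X,Y,Z):=\hin{\mathbf B(X,Y)}{\bar J Z}
\]
is symmetric in $Y,Z$; since $\mathbf B$ is symmetric, $h$ is totally symmetric in $X,Y,Z$. The normal part along $\xi$ vanishes, $\hin{\mathbf B(X,Y)}{\xi}=0$, so $\mathbf B$ is determined by $h$, while minimality is exactly the trace-freeness $\sum_i h(e_i,e_i,\cdot)=0$. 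A direct count in an orthonormal frame $e_1,e_2$ then gives $S:=\abs{\mathbf B}^2=4\bigl(h_{111}^2+h_{112}^2\bigr)$.

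Next I would pass to a local conformal coordinate $z$, write $g=e^{2u}\abs{dz}^2$, and set $\phi:=h(\partial_z,\partial_z,\partial_z)$ and $\Phi:=\phi\,dz^3$. Total symmetry, together with the Codazzi equation (the ambient $\mathbb{S}^5$ being a space form) and trace-freeness (minimality), forces $\partial_{\bar z}\phi=0$, so $\Phi$ is a \emph{holomorphic} cubic differential, and one checks $S=16\,\abs{\Phi}_g^2$. Wherever $S>0$, holomorphicity makes $\log\abs{\phi}^2$ harmonic, so
\[
\Delta_g\log S=\Delta_g\log\abs{\Phi}_g^2=-6\,\Delta_g u=6K,
\]
$K$ being the Gauss curvature. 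The Gauss equation in $\mathbb{S}^5$ gives $K=1+\hin{\mathbf B(e_1,e_1)}{\mathbf B(e_2,e_2)}-\abs{\mathbf B(e_1,e_2)}^2=1-\tfrac12 S$, whence, on $\{S>0\}$,
\[
\Delta_g\log S=6-3S.
\]

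Finally I would close with the maximum principle. If $S\equiv0$ then $\Sigma$ is totally geodesic. Otherwise $S$ attains a positive maximum at some $q$; the pinching $S\le2$ makes $6-3S\ge0$, so $\log S$ is subharmonic on the connected open set $\{S>0\}$ (the zero set of the holomorphic $\Phi$ being discrete, its complement is connected). The interior maximum forces $\log S$, hence $S$, to be constant on $\{S>0\}$; therefore $S$ has no zeros and $\Delta_g\log S\equiv0$, giving $S\equiv2$. Thus $\abs{\mathbf B}^2$ is identically $0$ or $2$. The main obstacle is the second step: verifying rigorously that Codazzi together with minimality and the Legendrian symmetry yields $\partial_{\bar z}\phi=0$, and pinning down the exact constant in $\Delta_g\log S=6K$. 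Once the sharp identity $\Delta_g\log S=6-3S$ is in hand the pinching argument is immediate, and it is precisely this identity---rather than the Simons inequality, whose pointwise reaction term $\tfrac32 S^2$ yields only the weaker threshold $\tfrac43$---that produces the optimal gap $2$.
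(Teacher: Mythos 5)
Your proof is correct, and it is essentially the standard argument underlying the cited result: the paper itself does not reprove this theorem of Yamaguchi--Kon--Miyahara (it is quoted from \cite{YKM}), but the identity you obtain from the holomorphic cubic differential is exactly the paper's \autoref{lem:bochner_B}, namely $\Delta\log\abs{\mathbf{B}}=3\kappa$ away from the isolated zeros of $\mathbf{B}$, which combined with the Gauss equation $2\kappa=2-\abs{\mathbf{B}}^2$ and the maximum principle gives the dichotomy just as you describe. One small quibble with your closing remark: for Legendrian surfaces the Simons-type reaction term is $(n+1)\abs{\sigma}^2-\tfrac32\abs{\sigma}^4$ (see the paper's proof of \autoref{thm:main1}), whose threshold $\tfrac{2(n+1)}{3}$ also equals $2$ when $n=2$, so the integral route is not as lossy here as you suggest, though your pointwise identity is indeed the cleaner way to the sharp gap.
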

 \begin{rem}
\autoref{legendrian mimimal} is inspired by Yau's Lagrangian version of this result (see \cite[Theorem 7]{Yau}).
 \end{rem}
 The higher dimensional case of this problem is largely remained open and one may see \cite{DV, YMI} for related results.
\subsection{Main results}
Besides effort made to obtain the existence of CSL submanifolds, people are also  interested in  understanding the properties of these examples. See \cite{HM, Ka} and \cite{Ono} for progress in this direction. To understand the geometry of CSL submanifolds and inspired by the first gap problem of closed minimal submanifolds in the unit sphere, Luo (\cite{Luo2, Luo1}) studied the first gap problem of CSL surfaces and get the following result.
\begin{known}[Luo]\label{thm:Luo}
Let $\Sigma$ be a closed contact stationary Legendrian surface in $\mathbb{S}^5$. Assume that $0\leq \abs{\mathbf{B}}^2\leq 2,$
then $\Sigma$ is either totally geodesic or $\abs{\mathbf{B}}^2=2$ and $\Sigma$ is a flat minimal Legendrian torus.
\end{known}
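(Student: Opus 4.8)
The plan is to reduce the statement to the minimal case handled by \autoref{legendrian mimimal} by exploiting two structural features of a Legendrian surface $\Sigma$ in $\mathbb{S}^5$: the special symmetry of its second fundamental form and the Gauss--Bonnet theorem. For a Legendrian submanifold the bundle map $\bar J$ identifies $T\Sigma$ with a subbundle of the normal bundle, the trilinear form $h(X,Y,Z)\coloneqq\hin{\mathbf{B}(X,Y)}{\bar J Z}$ is totally symmetric, and the mean curvature vector satisfies $\hin{\mathbf{H}}{\bar\xi}=0$, so that $\bar J\mathbf{H}$ is tangent to $\Sigma$. First I would introduce the mean curvature $1$-form $\sigma_{\mathbf{H}}(X)\coloneqq\hin{\mathbf{H}}{\bar J X}$, noting that $\bar J\mathbf{H}=-\sigma_{\mathbf{H}}^{\sharp}$, and record the standard structural fact that for a Legendrian submanifold of a Sasakian space form $\sigma_{\mathbf{H}}$ is closed. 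The Euler--Lagrange equation $\Div_g(\bar J\mathbf{H})=0$ is then precisely the co-closedness of $\sigma_{\mathbf{H}}$, so for a CSL surface $\sigma_{\mathbf{H}}$ is a harmonic $1$-form on $\Sigma$.

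Next I would extract the topological type of $\Sigma$ from the pinching hypothesis. The Gauss equation in the ambient space form of constant curvature $1$ gives, for a surface, $2K=2+\abs{\mathbf{H}}^2-\abs{\mathbf{B}}^2$, where $K$ is the Gauss curvature. Under $\abs{\mathbf{B}}^2\le 2$ this yields $2K\ge\abs{\mathbf{H}}^2\ge 0$, so $K\ge 0$ everywhere; Gauss--Bonnet then forces $\chi(\Sigma)\ge 0$, whence $\Sigma$ is a $2$-sphere or a torus.

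On the sphere the space of harmonic $1$-forms is trivial, so $\sigma_{\mathbf{H}}\equiv 0$, i.e.\ $\Sigma$ is minimal; \autoref{legendrian mimimal} then gives $\abs{\mathbf{B}}^2\equiv 0$ or $\abs{\mathbf{B}}^2\equiv 2$. The latter would force $K\equiv 1-\tfrac12\abs{\mathbf{B}}^2\equiv 0$, contradicting $\int_\Sigma K=4\pi>0$; hence $\abs{\mathbf{B}}^2\equiv 0$ and $\Sigma$ is totally geodesic. On the torus $\chi(\Sigma)=0$, so $\int_\Sigma K=0$; combined with $K\ge 0$ this gives $K\equiv 0$, and then $2+\abs{\mathbf{H}}^2-\abs{\mathbf{B}}^2=0$ together with $\abs{\mathbf{B}}^2\le 2$ forces $\mathbf{H}\equiv 0$ and $\abs{\mathbf{B}}^2\equiv 2$. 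Thus $\Sigma$ is a flat minimal Legendrian torus, which completes the dichotomy.

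The main obstacle is the minimal case on the sphere: showing that a minimal Legendrian $2$-sphere with $\abs{\mathbf{B}}^2\le 2$ must be totally geodesic is exactly the content of \autoref{legendrian mimimal}, whose proof rests on a Simons-type integral inequality adapted to the Legendrian setting. Everything else is bookkeeping once the harmonicity of $\sigma_{\mathbf{H}}$ and the Gauss identity are in place. A secondary technical point I would verify carefully is the closedness of $\sigma_{\mathbf{H}}$ in the Sasakian space form (the contact analogue of the Dazord--Morvan formula), together with the orientability of $\Sigma$ needed to exclude the Klein bottle, after which the argument above closes.
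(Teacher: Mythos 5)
Your argument is correct, but it is a genuinely different and more elementary route than the one the paper takes. The paper does not reprove \autoref{thm:Luo} directly; it subsumes it in the $n=2$ case of \autoref{thm:main0}, which is proved under the weaker pinching $\abs{\mathbf{B}}^2\le 2+\abs{\mathbf{H}}^2$ by running the Bochner formula of \autoref{lem:bochner} for the harmonic field $J\mathbf{H}$: integrating $\tfrac12\Delta\abs{\mathbf{H}}^2=\abs{\nabla(J\mathbf{H})}^2+\kappa\abs{\mathbf{H}}^2$ gives $\nabla(J\mathbf{H})\equiv 0$ and $\kappa\abs{\mathbf{H}}^2\equiv0$, and the non-minimal case is then settled by the substantial classification in \autoref{prop1} (surfaces with parallel $J\mathbf{H}$ are Calabi tori, via Simons' identity). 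You instead use only the harmonicity of the mean curvature form (closedness is exactly \autoref{mean curvature form}, co-closedness is the CSL equation), the Gauss identity $2K=2+\abs{\mathbf{H}}^2-\abs{\mathbf{B}}^2$, and Gauss--Bonnet: $K\ge0$ forces genus $0$ or $1$; on $\mathbb{S}^2$ the vanishing of $H^1$ gives minimality and \autoref{legendrian mimimal} plus $\int_\Sigma K=4\pi$ forces total geodesy; on the torus $K\equiv0$ together with $\abs{\mathbf{B}}^2\le2$ forces $\mathbf{H}\equiv0$ and $\abs{\mathbf{B}}^2\equiv2$ directly, without ever invoking the Bochner formula or \autoref{prop1}. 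What you buy is brevity and the observation that on the torus the CSL condition is not even needed once $K\equiv0$ is known; what you give up is scope: your argument is tied to the hypothesis $\abs{\mathbf{B}}^2\le2$ (under $\abs{\mathbf{B}}^2\le2+\abs{\mathbf{H}}^2$ the torus step only yields equality in the pinching, not $\mathbf{H}=0$, so the non-minimal Calabi tori of \autoref{thm:main0} are invisible to it), and being Gauss--Bonnet-based it does not generalize to $n\ge3$ as the Bochner approach does. Two small points to tie off: the exclusion of genus $\ge2$ and of $\mathbb{RP}^2$ goes through as you say, but the flat Klein bottle has $\chi=0$ and is not removed by your Euler-characteristic argument alone, so you should either assume orientability (as is implicit in the statement) or pass to the orientation double cover and then rule out the non-orientable quotient; and in the sphere case you could equally cite Yau's theorem that a minimal Legendrian $2$-sphere is equatorial, which the paper itself records in the introduction.
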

\begin{rem}
A flat minimal Legendrian torus in $\mathbb{S}^5$ must be a generalized Clifford torus, which also is a minimal Calabi torus stated in the appendix. For  details we refer to \cite[page 853]{Ha}.
\end{rem}
 The study toward the first gap problem for submanifolds satisfying a fourth order quasi-elliptic nonlinear equation was first carried out by Li. In \cite{Li1, Li2} and \cite{Li02}, Li proved several gap theorems for Willmore submanifolds in a sphere.

In this paper we are aiming to further study this kind of problem. We will not only generalize \autoref{thm:Luo} in dimension 2, but also prove such result in higher dimensions. The main results of this manuscript are the following:
\begin{theorem}\label{thm:main0}
Suppose $M^n (n\geq2)$ is a closed contact stationary Legendrian  submanifold of $\mathbb{S}^{2n+1}$ and
\begin{align}\label{eq:basic}
\abs{\mathbf{B}}^2\leq\dfrac{(n-1)(n+2)}{n}+\dfrac{n^2+3n-2}{2n^2}\abs{\mathbf{H}}^2-\dfrac{(n-1)(n-2)\abs{\mathbf{H}}\sqrt{4n+\abs{\mathbf{H}}^2}}{2n^2}.
\end{align}
\begin{enumerate}
\item If $n=2$, then $M$ is either totally geodesic or a Calabi torus stated in the appendix.
\item If $n\geq3$, then $M$ is either minimal or a Calabi product Legendrian immersion of a totally geodesic Legendrian immersion and a point, stated in the appendix.
\end{enumerate}
\end{theorem}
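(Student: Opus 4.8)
\emph{Proof sketch.} The strategy is to set up a Simons-type integral identity for the second fundamental form, adapted to the Legendrian structure, and then to remove the non-minimal contributions by exploiting the contact stationary condition. The whole argument rests on two structural facts. First, because $\mathbb{S}^{2n+1}$ is a space form and $M$ is Legendrian, the Codazzi equation forces the cubic form $h_{ijk}\coloneqq\hin{\mathbf{B}(e_i,e_j)}{\bar J e_k}$ to be totally symmetric in all three indices, with $\abs{\mathbf{B}}^2=\abs{h}^2$ and mean curvature $H_k=\sum_i h_{iik}$. Second, the CSL equation $\Div_g(\bar J\mathbf{H})=0$ is equivalent to $\sum_i\nabla_iH_i=0$, while the Legendrian condition (via the Lagrangian cone in $\mathbb{C}^{n+1}$) makes the mean curvature one-form closed, i.e. $\nabla_iH_j=\nabla_jH_i$; together these say that the mean curvature one-form is harmonic. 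I would then split off the trace, writing $h_{ijk}=\mathring h_{ijk}+\tfrac{1}{n+2}(H_i\delta_{jk}+H_j\delta_{ik}+H_k\delta_{ij})$ with $\mathring h$ totally symmetric and trace-free, so that $\abs{\mathbf{B}}^2=\abs{\mathring h}^2+\tfrac{3}{n+2}\abs{\mathbf{H}}^2$.

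Next I would compute the pointwise Simons identity $\tfrac12\Delta\abs{\mathbf{B}}^2=\abs{\nabla\mathbf{B}}^2+\hin{\mathbf{B}}{\nabla^2\mathbf{H}}+n\abs{\mathbf{B}}^2-P$, where $P$ is the quartic expression in $h$ produced by the Gauss equation. Integrating over the closed manifold $M$ makes the left-hand side vanish; the Hessian term $\int_M\hin{\mathbf{B}}{\nabla^2\mathbf{H}}$ is then integrated by parts twice, and here the symmetry $\nabla_iH_j=\nabla_jH_i$ and the trace condition $\sum_i\nabla_iH_i=0$ convert it into gradient terms in $\nabla\mathbf{B}$ and $\nabla\mathbf{H}$ plus further quartic terms, which I would reorganize with a refined Kato inequality for symmetric cubic forms so that the net gradient contribution is nonnegative. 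The crux is then to estimate $P$ from above by an expression in $\abs{\mathring h}^2$, $\abs{\mathbf{H}}^2$ and the cubic contraction $\sum_{ijk}\mathring h_{ijk}\hat H_i\hat H_j\hat H_k$ along the unit mean curvature direction $\hat H$: the full symmetry of $h$ controls all contractions by $\abs{\mathring h}$, and the cubic contraction is bounded by an Okumura-type inequality whose extremal value carries the factor $\tfrac{n-2}{\sqrt{n(n-1)}}$. This $(n-2)$ factor is exactly what produces the coefficient $(n-1)(n-2)$ in \eqref{eq:basic} and what makes the square-root term disappear when $n=2$, so that the hypothesis then relaxes to $\abs{\mathbf{B}}^2\le 2+\abs{\mathbf{H}}^2$, generalizing the bound $\abs{\mathbf{B}}^2\le2$ of \autoref{thm:Luo}.

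Assembling these estimates yields an integral inequality of the form $0\ge\int_M\bigl(\abs{\nabla\mathbf{B}}^2-c\,\abs{\nabla\abs{\mathbf{B}}}^2\bigr)\dif\mu_g+\int_M\abs{\mathring h}^2\,\Phi\,\dif\mu_g$, where $\Phi$ is a quadratic expression in $\abs{\mathbf{B}}$ and $\abs{\mathbf{H}}$; solving $\Phi\ge0$ for $\abs{\mathbf{B}}^2$ returns precisely the right-hand side of \eqref{eq:basic}, the relevant root being where the discriminant $\abs{\mathbf{H}}^2(4n+\abs{\mathbf{H}}^2)$ enters. Under the hypothesis \eqref{eq:basic} every integrand is nonnegative, so all of them vanish: $\nabla\mathbf{B}\equiv0$ (hence $\mathbf{B}$ is parallel and $\abs{\mathbf{B}}^2$, $\abs{\mathbf{H}}^2$ are constant), and $\abs{\mathring h}^2\,\Phi\equiv0$ with equality throughout the Kato and Okumura steps. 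Since $\abs{\mathbf{H}}^2$ is constant, either $\mathbf{H}\equiv0$, in which case $M$ is minimal and for $n=2$ \autoref{legendrian mimimal} identifies it as totally geodesic or the (minimal) Calabi torus; or $\mathbf{H}$ vanishes nowhere and $\Phi\equiv0$ holds with equality in the Okumura inequality, which forces the parallel cubic form to have the extremal eigenvalue type $(\lambda,\mu,\dots,\mu)$. This singles out the mean curvature direction and pins down $M$ as the corresponding model in the appendix: a Calabi torus when $n=2$, and a Calabi product Legendrian immersion of a totally geodesic Legendrian immersion and a point when $n\ge3$.

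The main obstacle I expect is the sharp quartic estimate: producing the bound on $P$ with the \emph{exact} constants that reproduce \eqref{eq:basic}, since the coefficients $\tfrac{n^2+3n-2}{2n^2}$ and $\tfrac{(n-1)(n-2)}{2n^2}$ emerge only from a careful optimization over trace-free symmetric cubic forms coupled to a fixed direction $\hat H$, with no slack to spare; interlacing this optimization with the Kato refinement so that the gradient terms stay nonnegative is the delicate part. The secondary difficulty is the final classification, namely translating the rigid pointwise algebraic type of a parallel cubic form into the explicit geometric models of the appendix.
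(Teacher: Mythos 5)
Your proposal takes a genuinely different route from the paper, and the central step does not work. You propose to run a Simons-type integral inequality for $\tfrac12\Delta\abs{\mathbf{B}}^2$, absorb the Hessian term $\hin{\mathbf{B}}{\nabla^2\mathbf{H}}$ by parts, and estimate the quartic term $P$ sharply enough (via Kato and Okumura-type inequalities) that the resulting pinching condition is exactly \eqref{eq:basic}, concluding $\nabla\mathbf{B}\equiv0$ everywhere. But in the minimal case \eqref{eq:basic} reduces to $\abs{\mathbf{B}}^2\leq\frac{(n-1)(n+2)}{n}$, and your conclusion ($\mathbf{B}$ parallel, with equality analysis pinning down the algebraic type) would then classify all closed minimal Legendrian submanifolds in this range. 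That is precisely the authors' Conjecture 1, which they state as open; their \autoref{thm:main0} deliberately concludes only ``minimal or Calabi product'' for $n\geq3$, leaving the minimal case unclassified. The known quartic estimates of the form $P\leq\frac32\abs{\sigma}^4$ (used in the paper's own \autoref{thm:main1}, following Li--Li and Chen--Xu) only reach the constant $\frac{2(n+1)}{3}$, which is strictly smaller than $\frac{(n-1)(n+2)}{n}=n+1-\frac2n$ for $n\geq4$; no Okumura-type refinement is known that closes this gap, and your sketch does not supply one. So the step ``solving $\Phi\geq0$ returns precisely the right-hand side of \eqref{eq:basic}'' is where the argument breaks down.

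The paper's actual mechanism is different and much lighter: since $M$ is CSL, the mean curvature form is a harmonic $1$-form, so the Bochner formula gives $\frac12\Delta\abs{\mathbf{H}}^2=\abs{\nabla(J\mathbf{H})}^2+Ric(J\mathbf{H},J\mathbf{H})$. The hypothesis \eqref{eq:basic} is shown (\autoref{lem:main0}) to be \emph{pointwise equivalent} to $Ric(J\mathbf{H},J\mathbf{H})\geq0$ --- this is an estimate on a quadratic expression $n-1+\sum_j\sigma_{11j}\mu_j-\sum_{j,k}\sigma_{1jk}^2$ in the single direction $e_1=J\mathbf{H}/\abs{\mathbf{H}}$, not on the full quartic term, which is exactly why the larger constant $\frac{(n-1)(n+2)}{n}$ and the term $\abs{\mathbf{H}}\sqrt{4n+\abs{\mathbf{H}}^2}$ (from completing a square) appear. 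Integration then forces $\nabla(J\mathbf{H})=0$ and $Ric(J\mathbf{H},J\mathbf{H})=0$; the equality analysis yields the algebraic form \eqref{eq:basic_limit} with $1+\lambda_1\lambda_2-\lambda_2^2=0$, and only at this final stage is Simons' identity invoked, to show $\Delta\lambda_1=0$ and hence that $\lambda_1,\lambda_2$ are constant, identifying $M$ with the Calabi product. Your identification of the totally symmetric cubic form, the harmonicity of the mean curvature form, and the trace decomposition are all correct and are indeed used by the paper; what is missing is the realization that the pinching controls the Ricci curvature along $J\mathbf{H}$ rather than the full Simons quartic.
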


\begin{theorem}\label{thm:main}
If $M^n (n\geq3)$ is a closed contact stationary Legendrian  submanifold of $\mathbb{S}^{2n+1}$ and
\begin{align}\label{eq:main}
\abs{\mathbf{B}}^2\leq\dfrac{4(n-1)}{n}+\dfrac{3n-2}{n^2}\abs{\mathbf{H}}^2,
\end{align}
then $M$ is totally geodesic.
\end{theorem}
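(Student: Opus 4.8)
The plan is to deduce \autoref{thm:main} from \autoref{thm:main0}. Writing $t=\abs{\mathbf{H}}^2$ and subtracting the right-hand side of \eqref{eq:main} from that of \eqref{eq:basic}, a short computation collecting the constant term and the coefficients of $t$ gives
\[
\frac{(n-1)(n-2)}{2n^2}\Big((2n+t)-\sqrt{t(4n+t)}\Big).
\]
Since $(2n+t)^2-t(4n+t)=4n^2>0$, this difference is strictly positive for every $n\ge3$. On one hand this shows that the right-hand side of \eqref{eq:main} does not exceed that of \eqref{eq:basic}, so \eqref{eq:main} implies \eqref{eq:basic} and \autoref{thm:main0} applies: $M$ is either minimal or a Calabi product Legendrian immersion of a totally geodesic Legendrian immersion and a point. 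On the other hand, the Calabi product is precisely the equality case of \eqref{eq:basic}, so along it $\abs{\mathbf{B}}^2$ equals the right-hand side of \eqref{eq:basic}, which by the strict positivity above is larger than the right-hand side of \eqref{eq:main}; this contradicts \eqref{eq:main}. Hence the Calabi product is excluded and only the minimal case survives.

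It remains to show that a closed minimal Legendrian $M^n$ with $\abs{\mathbf{B}}^2\le\frac{4(n-1)}{n}$ (the content of \eqref{eq:main} when $\mathbf{H}\equiv0$) is totally geodesic. Here I would use that for a Legendrian submanifold of $\mathbb{S}^{2n+1}$ the cubic form $h_{ijk}=\hin{\mathbf{B}(e_i,e_j)}{\bar J e_k}$ and its covariant derivative are totally symmetric, and that minimality makes $h$ trace-free. Inserting this symmetry into the Simons identity for $\frac12\Delta\abs{\mathbf{B}}^2$ on the round sphere, in place of the generic-codimension estimate behind \autoref{thm:Li-Li}, should yield a refined inequality
\[
\tfrac12\Delta\abs{\mathbf{B}}^2\ge\abs{\nabla\mathbf{B}}^2+c_n\,\abs{\mathbf{B}}^2\Big(\tfrac{4(n-1)}{n}-\abs{\mathbf{B}}^2\Big),\qquad c_n>0.
\]
Integrating over the closed manifold $M$ and using $\abs{\mathbf{B}}^2\le\frac{4(n-1)}{n}$ forces $\nabla\mathbf{B}\equiv0$ and $\abs{\mathbf{B}}^2\big(\frac{4(n-1)}{n}-\abs{\mathbf{B}}^2\big)\equiv0$, so $M$ is either totally geodesic or a parallel Legendrian submanifold with $\abs{\mathbf{B}}^2\equiv\frac{4(n-1)}{n}$.

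The main obstacle is the borderline model $\abs{\mathbf{B}}^2\equiv\frac{4(n-1)}{n}$. I expect the rigidity built into the sharp quartic estimate for trace-free totally symmetric cubic forms to pin down this model and to show that it occurs only in dimension $n=2$, namely the flat minimal Legendrian torus appearing in \autoref{thm:Luo}; this matches the restriction $n\ge3$ in the statement, under which the borderline example does not exist, leaving only the totally geodesic case. Establishing the sharp quartic estimate together with its rigidity is the technical heart of the argument, and it is precisely here that the total symmetry of the Legendrian cubic form—unavailable for general submanifolds—is indispensable.
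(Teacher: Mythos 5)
Your reduction to the minimal case is correct and is essentially the paper's own argument: the difference of the two right-hand sides is $\frac{(n-1)(n-2)}{2n^2}\bigl((2n+t)-\sqrt{t(4n+t)}\bigr)>0$, which is exactly the case $\varepsilon=1$ of the Young inequality $2\abs{\mathbf{H}}\sqrt{4n+\abs{\mathbf{H}}^2}\le\abs{\mathbf{H}}^2+\bigl(4n+\abs{\mathbf{H}}^2\bigr)$ used in \autoref{thm:epsilon}; since equality there would force $\abs{\mathbf{H}}^2=4n+\abs{\mathbf{H}}^2$, the inequality \eqref{eq:basic} is strict, the Calabi product (which realizes equality in \eqref{eq:basic}) is excluded, and $M$ must be minimal with $\abs{\mathbf{B}}^2\le\frac{4(n-1)}{n}$.

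The gap is in the minimal case. The inequality $\frac12\Delta\abs{\mathbf{B}}^2\ge\abs{\nabla\mathbf{B}}^2+c_n\abs{\mathbf{B}}^2\bigl(\frac{4(n-1)}{n}-\abs{\mathbf{B}}^2\bigr)$ on which your argument rests is not proved: you say it ``should'' follow from a sharp quartic estimate for trace-free totally symmetric cubic forms that you also do not establish, and you explicitly defer this as the ``technical heart''. As written this is a conjecture, not a proof, and it is not the right tool anyway: no Legendrian-specific refinement is needed. The paper (proof of \autoref{thm:main1}) uses the ordinary algebraic estimate of Chern--do Carmo--Kobayashi/Li--Li, $\sum_{i,j}\abs{[A_i,A_j]}^2+\sum_{i,j}\hin{A_i}{A_j}^2\le\frac32\abs{\sigma}^4$, which combined with the Legendrian Simons identity gives $\frac12\Delta\abs{\sigma}^2\ge\abs{\nabla\sigma}^2+(n+1)\abs{\sigma}^2-\frac32\abs{\sigma}^4$, i.e.\ pinching constant $\frac{2(n+1)}{3}$. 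The elementary observation that closes the proof --- and which is missing from your sketch --- is that $\frac{4(n-1)}{n}\le\frac{2(n+1)}{3}$ if and only if $(n-2)(n-3)\ge0$, hence for all $n\ge3$. Integration over the closed $M$ then yields $\sigma\equiv0$ or $\abs{\sigma}^2\equiv\frac{2(n+1)}{3}$, and the latter is ruled out by the rigidity part of the Li--Li/Chen--Xu theorem, which forces $n=2$; note that for $n=3$ the two constants coincide, so this borderline case genuinely occurs in the hypothesis and must be excluded by citing that classification, not by strictness. Your guess that the borderline model exists only in dimension $2$ is correct, but it is an input you must quote or prove, not a consequence of the restriction $n\ge3$.
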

\begin{rem} According to examples of Calabi product Legendrian immersion of a totally geodesic Legendrian immersion and a point, we see that both \autoref{thm:main0} and \autoref{thm:main} are optimal.
\end{rem}

Since Hamiltonian minimal submanifolds in $\mathbb{CP}^n$ could be seen as  CSL submanifolds in $\mathbb{S}^{2n+1}$ by the Hopf projection, which is a local isometric map, our results could be translated to related results for Hamiltonian minimal submanifolds in $\mathbb{CP}^n$, which we would like to omit here.

\textbf{Organization:} In section 2 we give some preliminaries on the Sasakian geometry, CSL submanifolds in a sphere and prove several important lemmas which will be useful in the remaining sections. In section 3 and 4 we give a complete proof of \autoref{thm:main0}. Actually in section 3 we get  stronger results in the surface case. \autoref{thm:main} is proved in section 5. In section 6 we prove more results and propose several conjectures. In the Appendix , we state the examples which are not only used in the statement of our theorems, but also illustrate that both \autoref{thm:main0} and \autoref{thm:main} are optimal.
\section{Preliminaries}
In this section we recall some basic material from contact geometry and submanifold geometry. For more information we refer to \cite{Bl, Xi}.
\subsection{Contact Manifolds}
\begin{defn}
A contact manifold $\bar M$ is an odd dimensional manifold with a one form $\bar\alpha$ such that $\bar\alpha\wedge(\dif\bar\alpha)^n\neq0$, where $\dim \bar M=2n+1$.
\end{defn}
Assume now that $(\bar M,\bar \alpha)$ is a given contact manifold of dimension $2n+1$. Then $\bar\alpha$ defines a $2n$-dimensional vector bundle over $\bar M$, where the fibre at each point $p\in\bar M$ is given by
$$\bar\xi_p=\ker\bar\alpha_p.$$
Sine $\bar\alpha\wedge (\dif\bar\alpha)^n$ defines a volume form on $\bar M$, we see that
$$\bar\omega\coloneqq \dif\bar\alpha$$
is a closed nondegenerate 2-form on $\bar\xi\oplus\bar\xi$ and hence it defines a symplectic product on $\bar\xi$ such that $(\bar\xi,\bar\omega|_{\bar\xi\oplus\bar\xi})$ becomes a symplectic vector bundle. A consequence of this fact is that there exists an almost complex bundle structure
$$\tilde{J}:\bar\xi\to\bar\xi$$
compatible with $\dif\bar\alpha$, i.e. a bundle endomorphism satisfying:
\\(1) $\tilde{J}^2=-\mathrm{id}_{\bar\xi}$,
\\(2) $\dif\bar\alpha(\tilde{J}X,\tilde{J}Y)=\dif\bar\alpha(X,Y)$ for all $X,Y\in\bar\xi$,
\\(3) $\dif\bar\alpha(X,\tilde{J}X)>0$ for $X\in\bar\xi\setminus {0}$.

Since $\bar M$ is an odd dimensional manifold, $\bar\omega$ must be degenerate on $T\bar M$, and so we obtains a line bundle $\bar\eta$ over $\bar M$ with fibres
$$\bar\eta_p\coloneqq \set{V\in T_p\bar M | \omega(V,W)=0, \forall  W\in\bar\xi_p}.$$
\begin{defn}
The Reeb vector field $\bar{\mathbf{R}}$ is the section of $\bar\eta$ such that $\bar\alpha(\mathbf{\bar R})=1$.
\end{defn}

Thus $\bar\alpha$ defines a splitting of $T\bar M$ into a line bundle $\bar\eta$ with the canonical section $\mathbf{\bar R}$ and a symplectic vector bundle $(\bar\xi,\bar\omega|\bar\xi\oplus\bar\xi)$. We denote the projection along $\bar\eta$ by $\bar\pi$, i.e.
\begin{align*}
\bar\pi:T\bar M\to\bar \xi,\quad V\mapsto\bar\pi(V)\coloneqq V-\bar\alpha(V)\mathbf{\bar R}.
\end{align*}
Using this projection we extend the almost complex structure $\tilde{J}$ to a section $\bar J\in\Gamma(T^*\bar M\otimes T\bar M)$ by setting
$$\bar J(V)\coloneqq\tilde{J}(\pi(V)),$$
for $V\in T\bar M$.

We call $\bar J$ an almost complex structure of the contact manifold $\bar M$.
\begin{defn}
Let $(\bar M,\bar \alpha)$ be a contact manifold, a submanifold $M$ of $(\bar M,\bar\alpha)$ is called an isotropic submanifold if $T_xM\subseteq\bar\xi_x$ for all $x\in M$.
\end{defn}
For algebraic reasons the dimension of an isotropic submanifold of a $(2n+1)$-dimensional contact manifold can not be bigger than $n$.
\begin{defn}
An isotropic submanifold $M\subseteq(\bar M,\bar\alpha)$ of maximal possible dimension $n$ is called a Legendrian submanifold.
\end{defn}
\subsection{Sasakian manifolds}
Let $(\bar M,\bar \alpha)$ be a contact manifold, with the almost complex structure $\bar J$ and Reeb field $\mathbf{\bar R}$. A Riemannian metric $\bar g_{\bar \alpha}$ defined on $\bar M$ is said to be associated, if it satisfies the following three conditions:
\\(1) $\bar g_{\bar\alpha}(\mathbf{\bar R},\mathbf{\bar R})=1$,
\\(2) $\bar g_{\bar \alpha}(V,\mathbf{\bar R})=0$, $\forall  V\in\bar\xi$,
\\(3) $\bar\omega(V,\bar JW)=\bar g_{\bar\alpha}(V,W)$, $\forall V,W\in\bar\xi$.\

We should mention here that on any contact manifold there exists an associated metric on it.

Sasakian manifolds are the odd dimensional analogue of K\"ahler manifolds. They are defined as follows.
\begin{defn}
A contact manifold $(\bar M,\bar\alpha)$ with an associated metric $\bar g_{\bar\alpha}$ is called Sasakian, if the cone $C\bar M$ equipped with the following extended metric $\bar{\bar{g}}$
\begin{align*}
(C\bar M,\bar{\bar{g}})=\left(\mathbb{R}_+\times \bar M,dr^2+r^2\bar g_{\bar\alpha}\right)
\end{align*}
is K\"ahler w.r.t the following canonical almost complex structure $\bar{\bar J}$ on $TC\bar M=\mathbb{R}\oplus\langle\mathbf{\bar R}\rangle\oplus\bar\xi:$
$$\bar{\bar J}(r\partial r)=\mathbf{\bar R},\quad \bar{\bar J}(\mathbf{\bar R})=-r\partial r.$$
Furthermore if $\bar g_{\bar\alpha}$ is Einstein, $\bar M$ is called a Sasakian  Einstein manifold.
\end{defn}
We recall several lemmas which are well known in Sasakian geometry. These lemmas will be used in the next section.
\begin{lem}\label{Reeb}
Let $(\bar M,\bar\alpha,\bar g_{\bar\alpha},\bar J)$ be a Sasakian manifold. Then
\begin{align*}
\bar{\nabla}_X\mathbf{\bar R}=\bar JX.
\end{align*}
for $X,Y\in TM$, where $\bar{\nabla}$ is the Levi-Civita connection on $(\bar M,\bar g_{\bar\alpha})$.
\end{lem}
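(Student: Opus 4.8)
The plan is to exploit the defining property of a Sasakian manifold, namely that the metric cone $(C\bar M,\bar{\bar g})=(\mathbb{R}_+\times\bar M,\,dr^2+r^2\bar g_{\bar\alpha})$ is K\"ahler with respect to $\bar{\bar J}$, and to push the parallelism $\bar{\bar\nabla}\,\bar{\bar J}=0$ down to $\bar M$, where $\bar{\bar\nabla}$ is the Levi-Civita connection of $\bar{\bar g}$. The first ingredient I would record is the relation between $\bar{\bar\nabla}$ and $\bar\nabla$. Since $\bar{\bar g}$ is a warped product with warping function $r$, the standard cone formulas
\[
\bar{\bar\nabla}_{\partial_r}\partial_r=0,\qquad \bar{\bar\nabla}_{\partial_r}X=\bar{\bar\nabla}_X\partial_r=\tfrac1r X,\qquad \bar{\bar\nabla}_XY=\bar\nabla_XY-r\,\bar g_{\bar\alpha}(X,Y)\,\partial_r
\]
hold for all $X,Y$ tangent to $\bar M$; these I would verify directly from the Koszul formula, which is routine.

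Next I would apply the K\"ahler identity $\bar{\bar\nabla}_X(\bar{\bar J}\mathbf{\bar R})=\bar{\bar J}(\bar{\bar\nabla}_X\mathbf{\bar R})$ to the Reeb field, using $\bar{\bar J}\mathbf{\bar R}=-r\partial_r$. For the left-hand side, since $X$ is tangent to $\bar M$ we have $X(r)=0$, so $\bar{\bar\nabla}_X(-r\partial_r)=-r\,\bar{\bar\nabla}_X\partial_r=-X$ by the second cone formula. For the right-hand side I would first note that $\bar\nabla_X\mathbf{\bar R}$ lies in the contact distribution $\bar\xi$, because $\bar g_{\bar\alpha}(\bar\nabla_X\mathbf{\bar R},\mathbf{\bar R})=\tfrac12 X\bigl(\bar g_{\bar\alpha}(\mathbf{\bar R},\mathbf{\bar R})\bigr)=0$ as $\mathbf{\bar R}$ has unit length. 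The third cone formula together with $\bar g_{\bar\alpha}(X,\mathbf{\bar R})=\bar\alpha(X)$ gives $\bar{\bar\nabla}_X\mathbf{\bar R}=\bar\nabla_X\mathbf{\bar R}-r\,\bar\alpha(X)\,\partial_r$, and since $\bar{\bar J}$ restricts to $\tilde J=\bar J$ on $\bar\xi$ while $\bar{\bar J}\partial_r=\tfrac1r\mathbf{\bar R}$, I obtain $\bar{\bar J}(\bar{\bar\nabla}_X\mathbf{\bar R})=\bar J(\bar\nabla_X\mathbf{\bar R})-\bar\alpha(X)\mathbf{\bar R}$.

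Equating the two sides and decomposing $X=\bar\pi(X)+\bar\alpha(X)\mathbf{\bar R}$ cancels the $\mathbf{\bar R}$-components and leaves $-\bar\pi(X)=\bar J(\bar\nabla_X\mathbf{\bar R})$. Applying $\bar J$ once more and using $\bar J^2=-\mathrm{id}$ on $\bar\xi$ together with $\bar\nabla_X\mathbf{\bar R}\in\bar\xi$ gives $\bar\nabla_X\mathbf{\bar R}=\bar J(\bar\pi(X))=\bar JX$, which is exactly the assertion. I expect the only delicate point to be the bookkeeping of the three mutually orthogonal summands $\mathbb{R}\partial_r$, $\langle\mathbf{\bar R}\rangle$ and $\bar\xi$ under $\bar{\bar J}$ — in particular keeping track of the $\bar\alpha(X)$ terms produced by the non-horizontal part of $X$ — rather than any substantial geometric input; once the cone connection formulas are in hand, the remainder of the computation is essentially forced.
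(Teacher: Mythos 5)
Your proof is correct. Note that the paper itself offers no proof of this lemma --- it is quoted as ``well known in Sasakian geometry'' without even a reference --- so there is no argument of the authors' to compare against; your derivation from the K\"ahler cone is the natural one here precisely because the paper \emph{defines} the Sasakian condition by requiring $(C\bar M,\bar{\bar g})$ to be K\"ahler for $\bar{\bar J}$. The three ingredients you use all check out: the warped-product connection formulas for $dr^2+r^2\bar g_{\bar\alpha}$ are standard and verifiable from Koszul; the fact that $\bar\nabla_X\mathbf{\bar R}\perp\mathbf{\bar R}$ (hence lies in $\bar\xi$, since the associated metric makes the splitting $T\bar M=\langle\mathbf{\bar R}\rangle\oplus\bar\xi$ orthogonal) is exactly what lets you apply $\bar J^2=-\mathrm{id}$ at the last step; and the bookkeeping $\bar{\bar J}\partial_r=\tfrac1r\mathbf{\bar R}$, $\bar{\bar J}\mathbf{\bar R}=-r\partial_r$, $\bar{\bar J}|_{\bar\xi}=\tilde J$ is handled consistently, with the $\bar\alpha(X)\mathbf{\bar R}$ terms cancelling as you claim. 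The one convention worth stating explicitly if you write this up is that $X$ is extended to the cone as an $r$-independent lift, so that $X(r)=0$ and $[X,\partial_r]=0$; with that said, the computation is, as you put it, essentially forced.
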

\begin{lem}\label{mean curvature form}
Let $M$ be a Legendrian submanifold in a Sasakian Einstein manifold $(\bar M,\bar\alpha,\bar g_{\bar\alpha},\bar J)$, then the mean curvature form $\bar\omega(\mathbf{H},\cdot)|_M$ defines a closed one form on $M$.
\end{lem}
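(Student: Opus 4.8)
The plan is to identify the mean curvature form with the metric dual of a tangent vector field and then show that its exterior derivative reduces, via the Codazzi equation, to the restriction of the ambient Ricci form, which vanishes under the Einstein hypothesis. First I would record that, since the associated metric satisfies $\bar\alpha=\bar g_{\bar\alpha}(\cdot,\mathbf{\bar R})$, the mean curvature vector $\mathbf{H}$ of a Legendrian $M$ lies in $\bar J(TM)$: using \autoref{Reeb}, for tangent vectors $X,Y$ one has $\bar g_{\bar\alpha}(\bar\nabla_XY,\mathbf{\bar R})=-\bar g_{\bar\alpha}(Y,\bar JX)=0$, because $\bar JX$ is normal to the Legendrian $M$, so the $\mathbf{\bar R}$-component of $\mathbf{B}$ vanishes and $\mathbf{H}$ has no Reeb part. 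Condition (3) for the associated metric then gives, for tangent $X$,
\begin{align*}
\bar\omega(\mathbf{H},X)=\bar g_{\bar\alpha}(\bar J\mathbf{H},X),
\end{align*}
so $\sigma_{\mathbf{H}}\coloneqq\bar\omega(\mathbf{H},\cdot)|_M$ is dual to the tangent field $\bar J\mathbf{H}$, and closedness of $\sigma_{\mathbf{H}}$ amounts to the symmetry $\nabla_iH_j=\nabla_jH_i$, where $H_k=\bar g_{\bar\alpha}(\mathbf{H},\bar Je_k)$ for a local orthonormal tangent frame $\{e_k\}$.

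Next I would introduce the cubic form $C(X,Y,Z)\coloneqq\bar g_{\bar\alpha}(\mathbf{B}(X,Y),\bar JZ)$ and prove it is totally symmetric. Symmetry in the first two arguments is the symmetry of $\mathbf{B}$; symmetry in the last two follows from the Sasakian identity $(\bar\nabla_X\bar J)Y=\bar g_{\bar\alpha}(X,Y)\mathbf{\bar R}-\bar\alpha(Y)X$, since on tangent vectors the correction term is a multiple of $\mathbf{\bar R}$ and dies upon pairing with a tangent vector. Writing $H_j=\sum_kC(e_k,e_k,e_j)$, the antisymmetrised derivative $\nabla_iH_j-\nabla_jH_i$ becomes a trace of the antisymmetrised covariant derivative of $C$.

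The core of the argument is the Codazzi equation, which I would use to rewrite $\nabla_iC_{kkj}=\nabla_kC_{ikj}+\bar g_{\bar\alpha}(\bar R(e_i,e_k)e_k,\bar Je_j)$ modulo $\bar\nabla\bar J$-terms that are again proportional to $\mathbf{\bar R}$ and vanish on $M$. Summing over $k$ and using the total symmetry of $C$ to cancel the $\nabla_kC_{ikj}$ traces, I obtain
\begin{align*}
\nabla_iH_j-\nabla_jH_i=\sum_k\left[\bar g_{\bar\alpha}(\bar R(e_i,e_k)e_k,\bar Je_j)-\bar g_{\bar\alpha}(\bar R(e_j,e_k)e_k,\bar Je_i)\right].
\end{align*}
I would then complete this tangential contraction to the full ambient Ricci tensor $\overline{\mathrm{Ric}}(e_i,\bar Je_j)$ by adding the contractions over the directions $\bar Je_k$ and $\mathbf{\bar R}$: the Reeb term vanishes because $\bar R(X,\mathbf{\bar R})\mathbf{\bar R}=X$ for $X\in\bar\xi$ while $e_i\perp\bar Je_j$, and the $\bar Je_k$-contraction is controlled by the $\bar J$-invariance of the transverse Sasakian curvature. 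This expresses $\nabla_iH_j-\nabla_jH_i$ as the antisymmetric part of $\overline{\mathrm{Ric}}(e_i,\bar Je_j)$. Finally, the Sasakian Einstein condition gives $\overline{\mathrm{Ric}}=2n\,\bar g_{\bar\alpha}$ on $\bar\xi$, so this antisymmetric part is a multiple of $\bar g_{\bar\alpha}(e_i,\bar Je_j)=-\bar\omega(e_i,e_j)$, which vanishes identically on the Legendrian $M$; hence $\nabla_iH_j=\nabla_jH_i$ and $\sigma_{\mathbf{H}}$ is closed.

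The step I expect to be the main obstacle is the bookkeeping of the Sasakian correction terms: one must verify carefully, using \autoref{Reeb} and the identity for $\bar\nabla\bar J$, that every contribution along the Reeb direction $\mathbf{\bar R}$ cancels, and in particular that the $\bar Je_k$-contraction of the ambient curvature genuinely assembles with the tangential contraction into the restricted Ricci tensor rather than some spurious tensor. A cleaner but less self-contained alternative is to pass to the K\"ahler cone $C\bar M$, in which $C(M)$ is Lagrangian and the Sasakian Einstein hypothesis makes the cone Ricci-flat; the classical identity that the Lagrangian mean curvature form equals the restricted Ricci form then forces $\dif\sigma_{\mathbf{H}}=0$ and descends to $M$. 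I would take the direct computation as the primary proof and keep the cone argument as a conceptual cross-check.
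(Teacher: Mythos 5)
The paper never proves this lemma: it is stated with a pointer to \cite[Proposition A.2]{Le} and \cite[Lemma 2.8]{Sm}, so there is no internal proof to compare against. Your direct computation is, in outline, exactly the argument of those references. The preliminary steps are all sound: $\mathbf{B}$ has no Reeb component since $\bar g_{\bar\alpha}(\bar\nabla_XY,\bar{\mathbf{R}})=-\bar g_{\bar\alpha}(Y,\bar JX)=0$ on a Legendrian; consequently $\bar\omega(\mathbf{H},\cdot)|_M$ is the metric dual of the tangent field $\bar J\mathbf{H}$ (up to sign, the paper's $1$-form $\mu$); the cubic form $C$ is totally symmetric because $(\bar\nabla_X\bar J)Y$ is proportional to $\bar{\mathbf{R}}$ for tangent $X,Y$; and the Codazzi equation in a general ambient manifold produces precisely the curvature contraction you write, with the $\bar\nabla\bar J$-corrections pairing to zero against the Reeb-free $\mathbf{B}$. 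The symmetry of $\sum_k\nabla_kC_{ikj}$ in $i,j$ then isolates $\dif\mu(e_i,e_j)$ as the antisymmetrised tangential curvature trace.

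The one step that is not yet executed is the one you flag yourself: promoting $\sum_k\bigl[\bar g_{\bar\alpha}(\bar R(e_i,e_k)e_k,\bar Je_j)-(i\leftrightarrow j)\bigr]$ to the (vanishing) antisymmetric part of $\overline{\mathrm{Ric}}(e_i,\bar Je_j)$. The Reeb contribution does vanish as you say, but the contraction over the $\bar Je_k$ directions is not handled by ``$\bar J$-invariance'' for free: in a Sasakian (as opposed to K\"ahler) manifold $\bar R$ commutes with $\bar J$ only up to zeroth-order terms built from $\bar g_{\bar\alpha}$, $\bar\omega$ and $\bar\alpha$, and one must check that after contraction these produce only multiples of $\bar g_{\bar\alpha}(e_i,\bar Je_j)$ and $\bar\omega(e_i,e_j)$, which vanish on a Legendrian. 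That bookkeeping is the actual content of the cited lemmas, so what remains is execution rather than strategy. Your cone alternative closes it cleanly and legitimately: since $\overline{\mathrm{Ric}}(\bar{\mathbf{R}},\bar{\mathbf{R}})=2n$ always holds on a $(2n+1)$-dimensional Sasakian manifold, the Einstein constant is forced to be $2n$, the cone is Ricci-flat K\"ahler, the Lagrangian identity $\dif\bigl(\bar\omega(\mathbf{H},\cdot)\bigr)=\rho|_{C(M)}=0$ applies, and the statement descends to $M$.
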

For a proof of this lemma we refer to \cite[Proposition A.2]{Le} or \cite[lemma 2.8]{Sm}. In fact they proved this result under a weaker assumption that
 $(\bar M,\bar\alpha,\bar g_{\bar\alpha},\bar J)$ is a weakly Sasakian Einstein manifold, where weakly Einstein means that $\bar g_{\bar\alpha}$ is Einstein
 only when restricted to the contact hyperplane $\ker\bar\alpha$.
\begin{lem}
Let $M$ be a Legendrian submanifold in a Sasakian manifold $(\bar M,\bar\alpha,\bar g_{\bar\alpha},\bar J)$ and $\mathbf{B}$ be the second fundamental form of $M$ in $\bar M$. Then we have
\begin{align*}
\bar g_{\bar\alpha}(\mathbf{B}(X,Y),\mathbf{\bar R})=0,
\end{align*}
for any $X,Y\in TM$.
\end{lem}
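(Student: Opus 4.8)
The plan is to compute the $\mathbf{\bar R}$-component of $\mathbf{B}$ directly from the Gauss formula, exploiting that the Reeb field is normal to $M$ together with \autoref{Reeb}. First I would observe that since $M$ is Legendrian we have $T_xM\subseteq\bar\xi_x=\ker\bar\alpha_x$, and condition (2) in the definition of an associated metric gives $\bar g_{\bar\alpha}(V,\mathbf{\bar R})=0$ for every $V\in\bar\xi$; hence $\mathbf{\bar R}$ is a normal vector field along $M$. Writing $\mathbf{B}(X,Y)=(\bar\nabla_XY)^{\perp}$ for $X,Y\in TM$, where $\perp$ denotes the component normal to $M$ in $\bar M$ and the tangential part of $\bar\nabla_XY$ is $\bar g_{\bar\alpha}$-orthogonal to $\mathbf{\bar R}$, I obtain
\begin{align*}
\bar g_{\bar\alpha}(\mathbf{B}(X,Y),\mathbf{\bar R})=\bar g_{\bar\alpha}(\bar\nabla_XY,\mathbf{\bar R}).
\end{align*}

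Next I would differentiate the identity $\bar g_{\bar\alpha}(Y,\mathbf{\bar R})=0$ (valid because $Y\in\bar\xi$) along $X$. By compatibility of $\bar\nabla$ with $\bar g_{\bar\alpha}$ this yields
\begin{align*}
\bar g_{\bar\alpha}(\bar\nabla_XY,\mathbf{\bar R})=-\bar g_{\bar\alpha}(Y,\bar\nabla_X\mathbf{\bar R})=-\bar g_{\bar\alpha}(Y,\bar JX),
\end{align*}
where the last equality is exactly \autoref{Reeb}. Thus the claim reduces to showing $\bar g_{\bar\alpha}(Y,\bar JX)=0$ for all $X,Y\in TM$.

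The decisive step, and the only place where the full strength of the Legendrian (isotropic) hypothesis enters beyond $\mathbf{\bar R}\perp TM$, is the vanishing of $\bar\omega$ on $TM$. Since $T_xM\subseteq\ker\bar\alpha$, the pullback of $\bar\alpha$ to $M$ vanishes identically, so the pullback of $\bar\omega=\dif\bar\alpha$ also vanishes, i.e. $\bar\omega(X,Y)=0$ for all $X,Y\in TM$. Because $X\in\bar\xi$ implies $\bar JX\in\bar\xi$, I may apply condition (3) of the associated metric in the form $\bar\omega(V,W)=-\bar g_{\bar\alpha}(V,\bar JW)$, obtained from $\bar\omega(V,\bar JW)=\bar g_{\bar\alpha}(V,W)$ by replacing $W$ with $-\bar JW$, taking $V=Y$ and $W=X$, to get $\bar g_{\bar\alpha}(Y,\bar JX)=-\bar\omega(Y,X)=\bar\omega(X,Y)=0$. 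Combining the three displays gives $\bar g_{\bar\alpha}(\mathbf{B}(X,Y),\mathbf{\bar R})=0$. I expect no serious obstacle: the computation is short, and the only thing that requires care is the correct sign conventions relating $\bar\omega$, $\bar J$ and $\bar g_{\bar\alpha}$ on the contact hyperplane $\bar\xi$.
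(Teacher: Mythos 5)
Your proof is correct. The paper states this lemma without proof (it is recalled as a standard fact of Sasakian geometry), and your argument is exactly the standard one: reduce $\bar g_{\bar\alpha}(\mathbf{B}(X,Y),\mathbf{\bar R})$ to $-\bar g_{\bar\alpha}(Y,\bar JX)$ via metric compatibility and $\bar\nabla_X\mathbf{\bar R}=\bar JX$, then kill that term using the vanishing of $\bar\omega=\dif\bar\alpha$ on the isotropic submanifold together with the compatibility $\bar\omega(V,\bar JW)=\bar g_{\bar\alpha}(V,W)$ and $\bar J^2=-\mathrm{id}_{\bar\xi}$; the signs all check out, so the proposal fills the omitted proof correctly.
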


In particular this lemma implies that the mean curvature $\mathbf{H}$ of $M$ is orthogonal to the Reeb field $\mathbf{\bar R}$.
\begin{lem}\label{commute of J}
For any $Y,Z\in \ker\bar\alpha$, we have
\begin{align*}
\bar g_{\bar\alpha}(\bar{\nabla}_X(\bar JY),Z)=\bar g_{\bar\alpha}(\bar J\bar{\nabla}_XY,Z).
\end{align*}
\end{lem}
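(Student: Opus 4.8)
The identity to be proved is the statement that the contact-distribution component of $(\bar\nabla_X\bar J)Y$ vanishes as soon as $Y$ lies in $\bar\xi=\ker\bar\alpha$. The plan is to rewrite the left-hand side as a single covariant derivative of the tensor $\bar J$, reduce the claim to the fundamental Sasakian structure equation, and then use the two hypotheses $Y,Z\in\ker\bar\alpha$ to annihilate the two terms that survive. First I would observe that, by the definition of the covariant derivative of a $(1,1)$-tensor,
\begin{align*}
\bar g_{\bar\alpha}(\bar\nabla_X(\bar J Y),Z)-\bar g_{\bar\alpha}(\bar J\bar\nabla_XY,Z)=\bar g_{\bar\alpha}\bigl((\bar\nabla_X\bar J)Y,Z\bigr),
\end{align*}
so the lemma is equivalent to showing $\bar g_{\bar\alpha}\bigl((\bar\nabla_X\bar J)Y,Z\bigr)=0$ for all $Y,Z\in\bar\xi$.

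Next I would invoke the defining structure equation of a Sasakian manifold, namely
\begin{align*}
(\bar\nabla_X\bar J)Y=\bar g_{\bar\alpha}(X,Y)\,\mathbf{\bar R}-\bar\alpha(Y)\,X,
\end{align*}
understood up to the overall sign fixed by the convention of \autoref{Reeb}. This is the infinitesimal form of the hypothesis that the metric cone $C\bar M$ is K\"ahler; one obtains it either by differentiating the relation $\bar\nabla_X\mathbf{\bar R}=\bar JX$ of \autoref{Reeb} and combining it with the compatibility of $\bar J$ and $\bar g_{\bar\alpha}$, or by quoting the standard reference \cite{Bl}. Pairing this equation with $Z$ yields
\begin{align*}
\bar g_{\bar\alpha}\bigl((\bar\nabla_X\bar J)Y,Z\bigr)=\bar g_{\bar\alpha}(X,Y)\,\bar g_{\bar\alpha}(\mathbf{\bar R},Z)-\bar\alpha(Y)\,\bar g_{\bar\alpha}(X,Z).
\end{align*}

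Finally I would dispatch the two terms. Since $Y\in\ker\bar\alpha$, the coefficient $\bar\alpha(Y)$ of the second term vanishes; since $Z\in\ker\bar\alpha=\bar\xi$, condition~(2) for the associated metric gives $\bar g_{\bar\alpha}(\mathbf{\bar R},Z)=0$, so the first term vanishes as well. Hence the right-hand side is zero, which is exactly the asserted identity, and the sign ambiguity above is immaterial because both surviving terms drop out regardless of it. I expect the only genuinely non-elementary ingredient to be the structure equation for $\bar\nabla\bar J$: starting from metric compatibility and the skew-symmetry of $\bar J$ alone, one can only reduce the statement to a symmetry relation between the derivatives of $Y$ and of $\bar JZ$, so it is precisely the Sasakian hypothesis (equivalently, the K\"ahlerity of the cone) that closes the argument.
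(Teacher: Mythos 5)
Your proof is correct. The paper states this lemma without proof, listing it among facts ``well known in Sasakian geometry,'' so there is no argument in the text to compare against; your reduction to $\bar g_{\bar\alpha}\bigl((\bar\nabla_X\bar J)Y,Z\bigr)=0$ followed by the Sasakian structure equation $(\bar\nabla_X\bar J)Y=\bar g_{\bar\alpha}(X,Y)\,\mathbf{\bar R}-\bar\alpha(Y)\,X$ and the two hypotheses $\bar\alpha(Y)=0$, $\bar g_{\bar\alpha}(\mathbf{\bar R},Z)=0$ is exactly the standard route. One minor caution: the structure equation does not follow just by differentiating $\bar\nabla_X\mathbf{\bar R}=\bar JX$ from \autoref{Reeb} (that identity alone characterizes only the weaker K-contact condition in dimensions $\geq 5$), so of the two justifications you offer, quoting \cite{Bl} is the one to rely on.
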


A most canonical example of Sasakian Einstein manifolds is the standard odd dimensional sphere $\mathbb{S}^{2n+1}$.

\emph{The standard sphere $\mathbb{S}^{2n+1}$.}
 Let $\mathbb{C}^{n+1}=\mathbb{R}^{2n+2}$ be the Euclidean space with coordinates $(x_1,...,x_{n+1},y_1,...,y_{n+1})$ and $\mathbb{S}^{2n+1}$ be the standard unit sphere in $\mathbb{R}^{2n+2}$. Define
$$\alpha_0=\dfrac{1}{2}\sum_{j+1}^{n+1}\left(x_j\dif y_j-y_j\dif x_j\right),$$
then
$$\bar\alpha\coloneqq\alpha_0|_{\mathbb{S}^{2n+1}}$$
defines a contact one form on $\mathbb{S}^{2n+1}$. Assume that $g_0$ is the standard metric on $\mathbb{R}^{2n+2}$ and $J$ is the standard complex structure of $\mathbb{C}^{n+1}$. We define
$$\bar g_{\bar\alpha}=g_0|_{\mathbb{S}^{2n+1}}, \bar J=J|_{\mathbb{S}^{2n+1}},$$
then $(\mathbb{S}^{2n+1},\bar\alpha,\bar g_{\bar\alpha},\bar J)$ is a Sasakian Einstein manifold with associated metric $\bar g_{\bar\alpha}$. Its contact hyperplane is characterized by
$$\ker\bar\alpha_x=\set{Y\in T_x\mathbb{S}^{2n+1}|\hin{Y}{J\mathbf{x}}=0}.$$

\subsection{CSL submanifolds in the unit sphere}
Assume $\phi:M^n\To\mathbb{S}^{2n+1}\subseteq \mathbb{C}^{n+1} $ is a Legendrian immersion. Let $\mathbf{\mathbf{B}}$ be the second fundamental form, $\mathbf{A}^{\nu}$ be the shape operator with respect to the norm vector $\nu\in T^{\bot}M$ and $\mathbf{H}$ be the mean curvature vector. The shape operator $\mathbf{A}^{\nu}$ is a symmetric operator on the tangent bundle and satisfies the following  Weingarten equations
\begin{align*}
\hin{\mathbf{B}(X,Y)}{\nu}=\hin{\mathbf{A}^{\nu}(X)}{Y},\quad\forall X, Y\in TM, \nu\in T^{\bot}M.
\end{align*}
The Gauss equations, Codazzi equations and Ricci equations are given by
\begin{align*}
R(X,Y,Z,W)=&\hin{X}{Z}\hin{Y}{W}-\hin{X}{W}\hin{Y}{Z}+\hin{\mathbf{B}(X,Z)}{\mathbf{B}(Y,W)}-\hin{\mathbf{B}(X,W)}{\mathbf{B}(Y,Z)},\\
\left(\nabla_{X}^{\bot}\mathbf{B}\right)(Y,Z)=&\left(\nabla_{Y}^{\bot}\mathbf{B}\right)(X,Z),\\
R^{\bot}(X,Y,\mu,\nu)=&\hin{\mathbf{A}^{\mu}(X)}{\mathbf{A}^{\nu}(Y)}-\hin{\mathbf{A}^{\mu}(Y)}{\mathbf{A}^{\nu}(X)},
\end{align*}
where $X, Y, Z, W\in TM, \mu, \nu\in T^{\bot}M$.

Let $\set{e_i}$ be a local orthonormal frame of $M$.  Then $\set{Je_i, J\phi}$ is a local orthonormal frame of the normal bundle $T^{\perp}M$, where $J$ is the complex structure of $\mathbb{C}^{n+1}$.
 Recall that $M$ is CSL if and only if
\begin{align*}
\Div(J\mathbf{H})=0.
\end{align*}
It is obvious that $M$ is CSL when $M$ is minimal.

Notice that for all $X, Y,Z\in\Gamma(TM)$, by \autoref{Reeb} we see
\begin{align*}
(\nabla_Z^{\perp}\mathbf{B})(X,Y)=&-J(\nabla_Z(J\mathbf{B}))(X,Y)+\hin{Z}{J\mathbf{B}(X,Y)}J\phi.
\end{align*}
Thus,
\begin{align*}
\nabla^{\perp}_X\mathbf{H}=&-J\nabla_X(J\mathbf{H})+\hin{X}{J\mathbf{H}}J\phi,\\
\Div(J\mathbf{H})=&-\sum_{i=1}^n\hin{\nabla^{\perp}_{e_i}\mathbf{H}}{Je_i}.
\end{align*}
As an immediate consequence, there is no  closed  non-minimal CMC Legendrian submanifold in $\mathbb{S}^{2n+1}$. Moreover $M$ is CSL iff
\begin{align*}
\sum_{i=1}^n\hin{\nabla^{\perp}_{e_i}\mathbf{H}}{Je_i}=0.
\end{align*}

Set $\sigma_{ijk}\coloneqq\hin{\mathbf{B}(e_i,e_j)}{Je_k}$ and $\mu_j\coloneqq\hin{\mathbf{H}}{Je_j}=\sum_{i=1}^n\sigma_{iij}(1\leq i,j,k\leq n)$, then
\begin{align*}
\abs{\mathbf{B}}^2=\abs{\sigma}^2,\quad\abs{\mathbf{H}}^2=\abs{\mu}^2,\\
\abs{\nabla^{\perp}\mathbf{B}}^2=\abs{\nabla\sigma}^2+\abs{\sigma}^2,\quad\abs{\nabla^{\perp}\mathbf{H}}^2=\abs{\nabla\mu}^2+\abs{\mu}^2.
\end{align*}
Moreover by \autoref{commute of J}, the Codazzi equation and \autoref{mean curvature form} we have
\begin{align*}
\sigma_{ijk}=\sigma_{jik}=\sigma_{ikj},\quad\sigma_{ijk,l}=\sigma_{ijl,k},\\
\dif\mu=0,\quad\delta\mu=\Div\left(J\mathbf{H}\right).
\end{align*}
Therefore we have
\begin{lem}
 $M$ is CSL iff $\mu$ is a harmonic $1$-form iff  $J\mathbf{H}$ is a harmonic vector field.
\end{lem}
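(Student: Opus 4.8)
The plan is to assemble the two identities recorded immediately above and then invoke the Hodge--de Rham characterization of harmonic forms. First I would recall that on the closed manifold $M$ a $1$-form is harmonic exactly when it is simultaneously closed and coclosed, i.e. $d\mu=0$ and $\delta\mu=0$; this is the standard consequence of Hodge theory on a compact manifold without boundary. So the whole statement reduces to reading off when these two conditions hold for the mean curvature form $\mu$.

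We have already established, as a consequence of \autoref{mean curvature form} (closedness of the mean curvature form) together with \autoref{commute of J} and the Codazzi equation, that
\begin{align*}
d\mu=0,\qquad \delta\mu=\Div(J\mathbf{H}).
\end{align*}
Hence $d\mu=0$ holds unconditionally, and the only remaining requirement for harmonicity, namely $\delta\mu=0$, is by the second identity equivalent to $\Div(J\mathbf{H})=0$, which is precisely the defining CSL equation. I would therefore conclude at once that $\mu$ is harmonic iff $\delta\mu=0$ iff $M$ is CSL, which is the first of the two asserted equivalences.

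For the equivalence with the vector-field statement I would identify $J\mathbf{H}$ as a tangent field. Since the Reeb direction $J\phi$ is normal to $M$ while $\mathbf{H}$ is orthogonal to it (by the lemma giving $\hin{\mathbf{B}(X,Y)}{\mathbf{\bar R}}=0$), one may expand $\mathbf{H}=\sum_{j}\mu_j\,Je_j$ in the normal frame $\set{Je_j}$. Applying $J$ and using $J^2=-\mathrm{id}$ on $\ker\bar\alpha$ gives $J\mathbf{H}=-\sum_j\mu_j e_j$, so $J\mathbf{H}$ is, up to sign, the tangent vector field metrically dual to $\mu$, i.e. $J\mathbf{H}=-\mu^{\sharp}$. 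Consequently $J\mathbf{H}$ is a harmonic vector field exactly when its dual $1$-form $\mu$ is harmonic, closing the chain.

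The argument is entirely formal once the two displayed identities are in hand, so I do not expect a genuine obstacle here; the substantive work --- deriving $d\mu=0$ and $\delta\mu=\Div(J\mathbf{H})$ from \autoref{mean curvature form}, \autoref{commute of J}, and the Codazzi equation --- has already been carried out in the preceding discussion. The only point deserving a word of care is the interpretation of ``harmonic vector field'' as the sharp of a harmonic $1$-form, which is exactly what the identification $J\mathbf{H}=-\mu^{\sharp}$ legitimizes.
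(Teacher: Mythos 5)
Your proposal is correct and follows essentially the same route as the paper, which presents the lemma as an immediate consequence of the displayed identities $\dif\mu=0$ and $\delta\mu=\Div(J\mathbf{H})$ together with the Hodge characterization of harmonic forms and the identification of $J\mathbf{H}$ with the (sign-reversed) metric dual of $\mu$. The only cosmetic remark is that the sign in $J\mathbf{H}=-\mu^{\sharp}$ is irrelevant for harmonicity, as you note.
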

By using the Bochner formula for harmonic vector fields  (cf. \cite{Jo}), we get
\begin{lem}\label{lem:bochner}
If $M$ is CSL, then
 \begin{align*}
\dfrac12\Delta\abs{\mathbf{H}}^2=&\abs{\nabla\left(J\mathbf{H}\right)}^2+Ric\left(J\mathbf{H},J\mathbf{H}\right).
\end{align*}
\end{lem}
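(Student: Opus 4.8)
The plan is to recognize the asserted identity as the classical Bochner--Weitzenb\"ock formula specialized to the harmonic $1$-form $\mu$, whose harmonicity is exactly the CSL condition supplied by the preceding lemma. Since $J\mathbf{H}$ is, up to sign, the tangent vector field $\mu^{\sharp}$ dual to $\mu$, one has $\abs{J\mathbf{H}}^2=\abs{\mu}^2=\abs{\mathbf{H}}^2$ and, with $\nabla$ the Levi-Civita connection of $(M,g)$, $\abs{\nabla(J\mathbf{H})}^2=\abs{\nabla\mu}^2$; it therefore suffices to establish the formula for $\abs{\mu}^2$ and $\mu^{\sharp}$.

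First I would compute directly in a local orthonormal frame. Writing the components of $\mu$ as $\mu_j$ and using $\Delta=\sum_k\nabla_k\nabla_k$, a one-line expansion gives
\begin{align*}
\tfrac12\Delta\abs{\mu}^2=\abs{\nabla\mu}^2+\sum_{j}\mu_j\Big(\sum_k\mu_{j,kk}\Big),
\end{align*}
so the whole content of the lemma is the evaluation of the rough Laplacian $\sum_k\mu_{j,kk}$. At this point I would feed in the two harmonicity conditions already recorded, namely $\dif\mu=0$ (that is, $\mu_{j,k}=\mu_{k,j}$) and $\delta\mu=0$ (that is, $\sum_k\mu_{k,k}=0$), both of which hold because $M$ is CSL.

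The decisive step is commuting covariant derivatives. Using $\mu_{j,k}=\mu_{k,j}$ to rewrite $\sum_k\mu_{j,kk}=\sum_k\mu_{k,jk}$, and then the Ricci identity to interchange the last two derivatives, one obtains
\begin{align*}
\sum_k\mu_{k,jk}=\nabla_j\Big(\sum_k\mu_{k,k}\Big)+\sum_m\mathrm{Ric}_{jm}\mu_m.
\end{align*}
The first term vanishes by $\delta\mu=0$, leaving $\sum_k\mu_{j,kk}=\sum_m\mathrm{Ric}_{jm}\mu_m$; substituting this back and contracting with $\mu_j$ produces $\tfrac12\Delta\abs{\mu}^2=\abs{\nabla\mu}^2+\mathrm{Ric}(\mu^{\sharp},\mu^{\sharp})$, which is the assertion.

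There is no genuine obstacle: the statement is a special case of the standard Bochner formula for harmonic vector fields. The only points that demand care are bookkeeping ones --- fixing the sign convention for $\Delta$, applying the Ricci identity with the curvature sign dictated by the Gauss-equation convention used above so that the curvature terms assemble into the \emph{intrinsic} Ricci tensor of $(M,g)$, and checking the elementary identifications $\abs{\mathbf{H}}=\abs{J\mathbf{H}}$ and $\abs{\nabla(J\mathbf{H})}=\abs{\nabla\mu}$ that allow passage between the $1$-form and vector-field formulations. Alternatively, one may simply quote the Weitzenb\"ock decomposition $\Delta_H=\nabla^{*}\nabla+\mathrm{Ric}$ on $1$-forms together with $\Delta_H\mu=0$ and the identity $\tfrac12\Delta\abs{\mu}^2=-\hin{\nabla^{*}\nabla\mu}{\mu}+\abs{\nabla\mu}^2$, exactly as in the cited reference.
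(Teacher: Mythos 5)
Your argument is correct and coincides with the paper's approach: the paper simply invokes the Bochner formula for harmonic vector fields (citing Jost) after the preceding lemma identifies the CSL condition with harmonicity of $\mu$, and your computation is just the standard derivation of that formula, with the sign conventions handled correctly so that the rough Laplacian of the closed, co-closed form $\mu$ equals $\mathrm{Ric}(\mu^{\sharp},\cdot)$.
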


From \autoref{lem:bochner} it is easy to see that we have
\begin{lem}\label{lem:Gauss}
If $\Sigma\subset\mathbb{S}^5$ is CSL and non-minimal, then the zero set of $\mathbf{H}$ is isolate and
\begin{align*}
\Delta\log\abs{\mathbf{H}}=\kappa
\end{align*}
provided $\mathbf{H}\neq0$, where $\kappa$ is the Gauss curvature of $\Sigma$.
\end{lem}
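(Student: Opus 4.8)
The plan is to specialize the Bochner identity of \autoref{lem:bochner} to the two-dimensional setting and then turn the resulting expression for $\Delta\abs{\mathbf{H}}^2$ into one for $\Delta\log\abs{\mathbf{H}}$. Since $\dim\Sigma=2$, its Ricci tensor equals $\kappa\,g$, so that $Ric(J\mathbf{H},J\mathbf{H})=\kappa\abs{J\mathbf{H}}^2=\kappa\abs{\mathbf{H}}^2$, where I use that $J$ is an isometry. Writing $\omega\coloneqq J\mathbf{H}$ for the tangent harmonic vector field (metric-dual, up to sign, to the harmonic form $\mu$), \autoref{lem:bochner} becomes
\begin{align*}
\tfrac12\Delta\abs{\mathbf{H}}^2=\abs{\nabla\omega}^2+\kappa\abs{\mathbf{H}}^2.
\end{align*}

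First I would settle the zero set. As $\Sigma$ is connected and non-minimal, $\mu$ is a nontrivial harmonic $1$-form; in a local isothermal coordinate it is the real part of a holomorphic differential, so that $\abs{\mathbf{H}}^2=e^{-2u}\abs{\phi}^2$ for some holomorphic $\phi$ and conformal factor $e^{2u}$. A nontrivial holomorphic function has isolated zeros, and these are exactly the zeros of $\mathbf{H}$; hence the zero set of $\mathbf{H}$ is isolated and $\log\abs{\mathbf{H}}$ is smooth off a discrete set. (Alternatively one may invoke unique continuation for the harmonic form $\mu$.)

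The crux is the pointwise identity
\begin{align*}
\abs{\omega}^2\abs{\nabla\omega}^2=2\abs{\hin{\nabla\omega}{\omega}}^2,
\end{align*}
valid wherever $\omega\neq0$, and I expect this cancellation to be the one genuinely non-formal step. Harmonicity of $\mu$ forces $\nabla\omega$ to be \emph{symmetric} (from $d\mu=0$) and \emph{trace-free} (from $\delta\mu=0$); on a surface such a tensor is represented in an orthonormal frame by $\left(\begin{smallmatrix}p&q\\ q&-p\end{smallmatrix}\right)$. A direct computation then yields $\abs{\nabla\omega}^2=2(p^2+q^2)$ and $\abs{\hin{\nabla\omega}{\omega}}^2=(p^2+q^2)\abs{\omega}^2$, which is precisely the asserted identity.

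Finally I would assemble the pieces. Away from the zero set, the formula $\Delta\log f=\Delta f/f-\abs{\nabla f}^2/f^2$ applied to $f=\abs{\mathbf{H}}^2=\abs{\omega}^2$, together with the specialized Bochner identity and $\nabla\abs{\omega}^2=2\hin{\nabla\omega}{\omega}$, gives
\begin{align*}
\Delta\log\abs{\mathbf{H}}=\frac{\abs{\nabla\omega}^2}{\abs{\omega}^2}+\kappa-\frac{2\abs{\hin{\nabla\omega}{\omega}}^2}{\abs{\omega}^4}.
\end{align*}
The pointwise identity makes the first and third terms cancel, leaving $\Delta\log\abs{\mathbf{H}}=\kappa$, as claimed.
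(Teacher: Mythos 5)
Your proof is correct and follows exactly the route the paper intends: it specializes the Bochner identity of \autoref{lem:bochner} to a surface (where $Ric=\kappa g$) and exploits that $\nabla\left(J\mathbf{H}\right)$ is symmetric and trace-free, which yields the pointwise cancellation $\abs{\omega}^2\abs{\nabla\omega}^2=2\abs{\hin{\nabla\omega}{\omega}}^2$ needed to convert $\Delta\abs{\mathbf{H}}^2$ into $\Delta\log\abs{\mathbf{H}}=\kappa$, together with the standard holomorphicity argument for the isolatedness of the zeros of a nontrivial harmonic $1$-form. The paper merely asserts the lemma as an easy consequence of \autoref{lem:bochner}; your write-up supplies precisely the omitted details, and each step checks out.
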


We will need the following Simons' identity (cf. \cite{Si}, see also \cite{CO, YMI}).
\begin{lem}[Simons' identity]
Assume that $M^n$ is a Legendrian submanifold in $\mathbb{S}^{2n+1}$. Then
\begin{equation}\label{eq:simons}
\begin{split}
\Delta\sigma_{ijk}\coloneqq&\sum_l\sigma_{ijk,ll}\\
=&\mu_{i,jk}-\mu_{i}\delta_{jk}-\mu_{j}\delta_{ik}+\sum_{s,t}\sigma_{ijt}\sigma_{tks}\mu_{s}\\
&+(n+1)\sigma_{ijk}+2\sum_{l,s,t}\sigma_{isl}\sigma_{jlt}\sigma_{kts}-\sum_{l,s,t}\sigma_{tli}\sigma_{tls}\sigma_{jks}
-\sum_{l,s,t}\sigma_{tlj}\sigma_{tls}\sigma_{iks}-\sum_{l,s,t}\sigma_{tlk}\sigma_{tls}\sigma_{ijs}.
\end{split}
\end{equation}
Consequently,
\begin{align*}
\Delta\mu_k\coloneqq\sum_{l}\mu_{k,ll}=&\sum_{i}\mu_{i,ik}+(n-1)\mu_{k}+\sum_{s,t}\sigma_{tsk}\mu_{t}\mu_{s}-\sum_{l,s,t}\sigma_{tlk}\sigma_{tls}\mu_{s}.
\end{align*}
\end{lem}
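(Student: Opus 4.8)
\emph{Proof strategy.} The plan is to derive \eqref{eq:simons} by the classical Bochner--Simons method: commute covariant derivatives of $\sigma$ twice and feed in the Gauss equation. Two structural facts will drive everything. First, combining the total symmetry $\sigma_{ijk}=\sigma_{jik}=\sigma_{ikj}$ with the Codazzi relation $\sigma_{ijk,l}=\sigma_{ijl,k}$ shows that $\sigma_{ijk,l}$ is \emph{fully symmetric} in all four indices, since transpositions of $\{i,j,k\}$ together with the transposition $(k\,l)$ generate the whole symmetric group. Second, using this symmetry together with $\mu_i=\sum_l\sigma_{lli}$ one obtains the trace identity
\[
\sum_l\sigma_{ijl,l}=\mu_{i,j},
\]
because the contracted indices may be reordered freely and differentiation commutes with metric contraction.

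Next I would start from $\Delta\sigma_{ijk}=\sum_l\sigma_{ijk,ll}$ and apply Codazzi (differentiated once more, then traced) to rewrite it as $\sum_l\sigma_{ijl,kl}$. Commuting the last two derivatives by the Ricci identity for the $(0,3)$-tensor $\sigma$, with the curvature convention of the Gauss equation above, gives
\[
\sum_l\sigma_{ijl,kl}=\sum_l\sigma_{ijl,lk}-\sum_{l,m}\bigl(\sigma_{mjl}R_{imkl}+\sigma_{iml}R_{jmkl}+\sigma_{ijm}R_{lmkl}\bigr),
\]
and the trace identity turns the first term on the right into $\mu_{i,jk}$. Thus the entire content of \eqref{eq:simons} reduces to evaluating the three curvature contractions.

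At this point I would substitute the Gauss equation, which in the frame $\{Je_p,J\phi\}$ reads $R_{imkl}=\delta_{ik}\delta_{ml}-\delta_{il}\delta_{mk}+\sum_p(\sigma_{ikp}\sigma_{mlp}-\sigma_{ilp}\sigma_{mkp})$, using that $\mathbf{B}(e_i,e_k)=\sum_p\sigma_{ikp}Je_p$ has no $J\phi$-component. The $\delta$-terms of the first two contractions produce the linear pieces $-\mu_j\delta_{ik}-\mu_i\delta_{jk}+2\sigma_{ijk}$, while the trace $\sum_lR_{lmkl}$ in the third contraction reproduces the Ricci curvature and contributes $(n-1)\sigma_{ijk}$ together with the mixed term $+\sum_{s,t}\sigma_{ijt}\sigma_{tks}\mu_s$; adding the three gives the coefficient $(n+1)$. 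The main obstacle is purely organizational: each of the three contractions also generates several cubic expressions $\sigma\sigma\sigma$ from the $\mathbf{B}\cdot\mathbf{B}$ part of Gauss, and one must repeatedly invoke the total symmetry of $\sigma$ and relabel dummy indices to collect them into exactly the symmetric combination displayed in \eqref{eq:simons}. This bookkeeping, rather than any conceptual difficulty, is the crux of the proof.

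Finally, the displayed consequence follows by setting $j=i$ and summing. The trace identity sends $\mu_{i,jk}$ to $\sum_i\mu_{i,ik}$, each $\delta$-term becomes $-\mu_k$, the mixed term becomes $\sum_{s,t}\sigma_{tsk}\mu_t\mu_s$, the linear term becomes $(n+1)\mu_k$, and the last cubic term becomes $-\sum_{l,s,t}\sigma_{tlk}\sigma_{tls}\mu_s$. It then remains to verify that the two surviving cubic sums cancel: both reduce, after reindexing, to $2\sum_{s,t}Q_{st}\sigma_{kst}$ with $Q_{st}=\sum_{a,l}\sigma_{sal}\sigma_{tal}$ symmetric, so their difference vanishes and the stated formula for $\Delta\mu_k$ emerges.
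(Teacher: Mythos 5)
Your proposal is correct and follows essentially the same route as the paper: rewrite $\sum_l\sigma_{ijk,ll}$ as $\sum_l\sigma_{ijl,kl}$ via Codazzi, commute derivatives with the Ricci identity, substitute the Gauss equation, and collect terms (your curvature terms agree with the paper's after using the antisymmetry $R_{imkl}=-R_{mikl}$). The only additions are presentational: you make explicit the full four-index symmetry of $\nabla\sigma$ and the cancellation of the two cubic sums in the trace formula for $\Delta\mu_k$, which the paper leaves implicit.
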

\begin{proof}
The Ricci identity yields
\begin{align*}
\sigma_{ijk,lm}=\sigma_{ijk,ml}+\sum_{t}\sigma_{tjk}R_{tilm}+\sum_{t}\sigma_{itk}R_{tjlm}+\sum_{t}\sigma_{ijt}R_{tklm}.
\end{align*}
Therefore,
\begin{align*}
\Delta\sigma_{ijk}=&\sum_{l}\sigma_{ijk,ll}\\
=&\sum_{l}\sigma_{ijl,kl}\\
=&\sum_{l}\sigma_{ijl,lk}+\sum_{l,t}\sigma_{tjl}R_{tikl}+\sum_{l,t}\sigma_{itl}R_{tjkl}+\sum_{l,t}\sigma_{ijt}R_{tlkl}\\
=&\mu_{i,jk}+\sum_{l,t}\sigma_{tjl}R_{tikl}+\sum_{l,t}\sigma_{itl}R_{tjkl}+\sum_{l,t}\sigma_{ijt}R_{tlkl}.
\end{align*}
Thus,
\begin{align*}
\Delta\sigma_{ijk}=&\mu_{i,jk}+\sum_{l,t}\sigma_{tjl}\left(\delta_{tk}\delta_{il}-\delta_{tl}\delta_{ik}+\sigma_{tks}\sigma_{ils}-\sigma_{tls}\sigma_{iks}\right)\\
&+\sum_{l,t}\sigma_{til}\left(\delta_{tk}\delta_{jl}-\delta_{tl}\delta_{jk}+\sigma_{tks}\sigma_{jls}-\sigma_{tls}\sigma_{jks}\right)\\
&+\sum_{l,t}\sigma_{ijt}\left((n-1)\delta_{tk}+\sigma_{tks}\sigma_{lls}-\sigma_{tls}\sigma_{lks}\right)\\
=&\mu_{i,jk}+\sigma_{ijk}-\mu_{j}\delta_{ik}+\sum_{l,s,t}\sigma_{tjl}\left(\sigma_{tks}\sigma_{ils}-\sigma_{tls}\sigma_{iks}\right)\\
&+\sigma_{ijk}-\mu_{i}\delta_{jk}+\sum_{l,s,t}\sigma_{til}\left(\sigma_{tks}\sigma_{jls}-\sigma_{tls}\sigma_{jks}\right)\\
&+(n-1)\sigma_{ijk}+\sum_{l,s,t}\sigma_{ijt}\left(\sigma_{tks}\mu_{s}-\sigma_{tls}\sigma_{lks}\right)\\
=&\mu_{i,jk}-\mu_{i}\delta_{jk}-\mu_{j}\delta_{ik}+\sum_{s,t}\sigma_{ijt}\sigma_{tks}\mu_{s}\\
&+(n+1)\sigma_{ijk}+2\sum_{l,s,t}\sigma_{tjl}\sigma_{tks}\sigma_{ils}-\sum_{l,s,t}\sigma_{tjl}\sigma_{tls}\sigma_{iks}
-\sum_{l,s,t}\sigma_{til}\sigma_{tls}\sigma_{jks}-\sum_{l,s,t}\sigma_{tls}\sigma_{lks}\sigma_{ijt}.
\end{align*}
\end{proof}

\section{Rigidity results for closed CSL surfaces in the unit sphere}
In this section, we assume $\Sigma\subset\mathbb{S}^5$ is a closed CSL surface.

\begin{lem}[cf. \cite{Yau}]\label{lem:bochner_B}If $\Sigma$ is minimal and non totally geodesic , then the zero set of $\mathbf{B}$ is isolate and
\begin{align*}
\Delta\log\abs{\mathbf{B}}=3\kappa
\end{align*}
provided $\mathbf{B}\neq0$.
\end{lem}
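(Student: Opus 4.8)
The plan is to realize $\mathbf{B}$ as (the norm of) a \emph{holomorphic cubic differential}, exactly paralleling the way \autoref{lem:Gauss} is extracted from the harmonic one-form $\mu$: there the relevant object is a degree-one differential, which accounts for the factor $1$, whereas here $\mathbf{B}$ is a cubic object and will produce the factor $3$. The starting point is that minimality forces $\mu=0$, so the totally symmetric tensor $\sigma_{ijk}=\hin{\mathbf{B}(e_i,e_j)}{Je_k}$ becomes totally trace-free, and the Codazzi relation $\sigma_{ijk,l}=\sigma_{ijl,k}$ together with the symmetry $\sigma_{ijk}=\sigma_{jik}=\sigma_{ikj}$ promotes $\nabla\sigma$ to a totally symmetric $4$-tensor.

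Concretely, I would fix a local conformal coordinate $z$ with $g=e^{2\varphi}\abs{dz}^2$, set $\sigma_{zzz}\coloneqq\sigma(\partial_z,\partial_z,\partial_z)$, and consider the cubic differential $\Xi\coloneqq\sigma_{zzz}\,dz^{3}$. The heart of the argument, and the step I expect to be the main obstacle, is to prove that $\Xi$ is holomorphic. For this I would combine three ingredients: (i) for a conformal metric the only nonzero Christoffel symbols are $\Gamma^{z}_{zz}=2\partial_z\varphi$ and its conjugate, so that the covariant derivative reduces to the ordinary one, $\sigma_{zzz,\bar z}=\partial_{\bar z}\sigma_{zzz}$; (ii) total trace-freeness forces the mixed component $\sigma_{zz\bar z}$ to vanish identically; and (iii) the Codazzi symmetry gives $\sigma_{zzz,\bar z}=\sigma_{zz\bar z,z}$, whose right-hand side vanishes by (i)--(ii). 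Hence $\partial_{\bar z}\sigma_{zzz}=0$. The delicate part is the careful bookkeeping of the Legendrian identification (the normal directions $Je_k$ being handled through $\bar J$) and of the conformal connection coefficients; once holomorphicity is in hand, everything downstream is routine.

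The remaining steps are then standard. A short algebraic computation with a trace-free symmetric cubic form in two variables gives the pointwise identity $\abs{\Xi}_g^2=\tfrac12\abs{\mathbf{B}}^2$, where $\abs{\Xi}_g^2=8e^{-6\varphi}\abs{\sigma_{zzz}}^2$; in particular $\Xi$ vanishes precisely where $\mathbf{B}$ does. Since $\Sigma$ is not totally geodesic, $\mathbf{B}\not\equiv0$, so $\sigma_{zzz}$ is a nonzero local holomorphic function and its zero set, equivalently the zero set of $\mathbf{B}$, is isolated. Finally, away from these zeros I would write $\log\abs{\mathbf{B}}=\mathrm{const}-3\varphi+\log\abs{\sigma_{zzz}}$ and apply $\Delta=4e^{-2\varphi}\partial_z\partial_{\bar z}$: the term $\log\abs{\sigma_{zzz}}$ is harmonic as the real part of a local holomorphic function, while $\Delta(-3\varphi)=3\kappa$ by the Gauss equation $\Delta\varphi=-\kappa$. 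Adding these yields $\Delta\log\abs{\mathbf{B}}=3\kappa$, as claimed.
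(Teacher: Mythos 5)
Your proof is correct and is essentially the argument the paper has in mind: \autoref{lem:bochner_B} is stated there without proof, only with a reference to Yau, whose method is precisely the observation that for a minimal Legendrian surface the totally symmetric, trace-free, Codazzi tensor $\sigma$ packages into a holomorphic cubic differential $\sigma_{zzz}\,dz^{3}$. Your bookkeeping checks out (in particular $\abs{\Xi}_g^2=\tfrac12\abs{\mathbf{B}}^2$, hence $\log\abs{\mathbf{B}}=\mathrm{const}-3\varphi+\log\abs{\sigma_{zzz}}$), and both the isolatedness of the zeros and the identity $\Delta\log\abs{\mathbf{B}}=3\kappa$ follow exactly as you describe.
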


\begin{cor}If $\Sigma\subset \mathbb{S}^5$ is a closed minimal Legendre surface with constant Gauss curvature, then  $\Sigma$ is either totally geodesic or a Calabi torus stated in the appendix.
\end{cor}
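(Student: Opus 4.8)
The plan is to dispose of the totally geodesic case trivially and then show that a non--totally--geodesic example must be a flat torus with $\abs{\mathbf{B}}^2=2$, which the appendix identifies as a Calabi torus. So suppose $\Sigma$ is minimal, not totally geodesic, and has constant Gauss curvature $\kappa$. The first step is to convert constancy of $\kappa$ into constancy of $\abs{\mathbf{B}}^2$ via the Gauss equation. For a minimal surface in the unit sphere the Gauss equation, together with the trace--free condition $\mathbf{B}(e_1,e_1)+\mathbf{B}(e_2,e_2)=0$, collapses to the scalar relation $\kappa=1-\tfrac12\abs{\mathbf{B}}^2$; hence $\abs{\mathbf{B}}^2\equiv 2(1-\kappa)$ is constant, and since $\Sigma$ is not totally geodesic this constant is strictly positive. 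In particular $\mathbf{B}$ has \emph{no} zeros.

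Second, I would feed this into \autoref{lem:bochner_B}. Because $\mathbf{B}$ is nowhere vanishing, $\log\abs{\mathbf{B}}$ is a genuinely smooth function on the closed surface $\Sigma$ and the identity $\Delta\log\abs{\mathbf{B}}=3\kappa$ holds everywhere, with no singular contributions to track. Integrating over the closed surface and using that the integral of a Laplacian vanishes on a closed manifold gives
\begin{align*}
0=\int_\Sigma\Delta\log\abs{\mathbf{B}}\,\dif\mu=3\kappa\int_\Sigma\dif\mu=3\kappa\,V(\Sigma),
\end{align*}
so $\kappa=0$. This is the crux of the argument: the clean integration is available precisely because constancy of $\kappa$ already forced $\mathbf{B}$ to be nowhere zero, removing the logarithmic singularities that \autoref{lem:bochner_B} would otherwise permit. (If one instead kept the isolated zeros, the Bochner identity integrates to $3\kappa\,V(\Sigma)=-2\pi\sum_i m_i\le 0$, so $\kappa\le 0$; but a negative constant $\kappa$ again makes $\abs{\mathbf{B}}^2$ a positive constant with no zeros, forcing $\kappa=0$ anyway. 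Either route reaches the same conclusion.)

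Finally, with $\kappa=0$ we obtain $\abs{\mathbf{B}}^2=2$ and, by Gauss--Bonnet, $\int_\Sigma\kappa\,\dif\mu=2\pi\chi(\Sigma)=0$, so $\Sigma$ is a flat closed surface. Thus $\Sigma$ is a flat minimal Legendrian torus in $\mathbb{S}^5$, and by the classification of such tori recorded in the appendix (a flat minimal Legendrian torus is a generalized Clifford torus, equivalently a minimal Calabi torus) we conclude that $\Sigma$ is a Calabi torus. The only steps that demand care are the Gauss--equation reduction of the first paragraph and the bookkeeping of possible zeros of $\mathbf{B}$; once constancy of $\abs{\mathbf{B}}^2$ is in hand, the remainder is a one--line integration together with Gauss--Bonnet and the cited classification.
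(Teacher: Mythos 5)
Your proof is correct and follows essentially the same route as the paper: Gauss equation ($2\kappa=2-\abs{\mathbf{B}}^2$ in the minimal case) forces $\abs{\mathbf{B}}^2$ to be constant, then \autoref{lem:bochner_B} forces either $\mathbf{B}\equiv0$ or $\kappa\equiv0$, and the flat case is the Calabi torus. Your extra care about the zero set of $\mathbf{B}$ and the integration step just makes explicit what the paper leaves implicit.
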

\begin{proof}By Gauss equation, $2\kappa=2-\abs{\mathbf{B}}^2$, we know that $\abs{\mathbf{B}}^2$ is a constant. According to \autoref{lem:bochner_B}, we know that either $\mathbf{B}\equiv0$ or $\kappa\equiv0$. Thus $\Sigma$ is either the totally geodesic sphere or a flat minimal Legendrian torus.
\end{proof}

More generally, we have
\begin{prop}\label{prop1}
Assume that $\Sigma\subset\mathbb{S}^5$ is a closed nonminimal Legendre surface with $\nabla\left(J\mathbf{H}\right)=0$. Then $\Sigma$ is a Calabi torus stated in the appendix.
\end{prop}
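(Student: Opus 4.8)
The plan is to convert the hypothesis $\nabla(J\mathbf{H})=0$ first into intrinsic rigidity of $\Sigma$ and then into full rigidity of its second fundamental form. Since $\Sigma$ is Legendrian, $J$ maps $T\Sigma$ isomorphically onto a rank-two part of the normal bundle, and $J\mathbf{H}=-\sum_j\mu_j e_j$ is a \emph{tangent} vector field with $\abs{J\mathbf{H}}=\abs{\mathbf{H}}$. From $\nabla(J\mathbf{H})=0$ one gets $\nabla\abs{\mathbf{H}}^2=2\hin{\nabla(J\mathbf{H})}{J\mathbf{H}}=0$, so $\abs{\mathbf{H}}$ is constant, and it is a positive constant because $\Sigma$ is nonminimal. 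Feeding this into \autoref{lem:bochner} gives $0=\tfrac12\Delta\abs{\mathbf{H}}^2=\abs{\nabla(J\mathbf{H})}^2+Ric(J\mathbf{H},J\mathbf{H})=\kappa\abs{\mathbf{H}}^2$, whence $\kappa\equiv0$. (Equivalently, a nowhere-vanishing parallel vector field kills the curvature operator of a surface.) Thus $\Sigma$ is flat, and being closed it is a torus.

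Next I would choose a global parallel orthonormal frame $\{e_1,e_2\}$ on this flat torus with $e_1$ along $J\mathbf{H}$, so that the connection coefficients vanish, covariant derivatives become directional derivatives, and $\mu_1=\abs{\mathbf{H}}$ is a positive constant while $\mu_2=0$. Note that $\nabla(J\mathbf{H})=0$ forces $\Div(J\mathbf{H})=0$, so $\Sigma$ is automatically CSL and $\mu$ is a parallel, hence harmonic, one-form. Moreover the Gauss equation together with $\kappa\equiv0$ gives $\abs{\mathbf{B}}^2=\abs{\sigma}^2=2+\abs{\mathbf{H}}^2$, again a constant.

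The heart of the matter is to upgrade the parallelism of the trace $\mu$ to parallelism of the whole cubic tensor $\sigma$. By the Codazzi relation $\sigma_{ijk,l}=\sigma_{ijl,k}$ together with the symmetry $\sigma_{ijk}=\sigma_{jik}=\sigma_{ikj}$, the tensor $\nabla\sigma$ is totally symmetric, so its trace-free part represents a cubic differential on the Riemann surface $\Sigma$, and the CSL condition (harmonicity of $\mu$) makes this differential holomorphic. On a flat torus a holomorphic cubic differential has constant coefficients, hence is parallel, and combined with $\nabla\mu=0$ this yields $\nabla\sigma\equiv0$. (Alternatively, integrate Simons' identity \eqref{eq:simons} over the closed $\Sigma$: the term $\mu_{i,jk}$ drops since $\mu$ is parallel, and $\int_\Sigma\abs{\nabla\sigma}^2=-\int_\Sigma\hin{\Delta\sigma}{\sigma}$ collapses to a purely algebraic integrand which, in the adapted frame, forces $\nabla\sigma=0$.) Consequently every component $\sigma_{ijk}$ is a constant, determined by $\abs{\mathbf{H}}$ through $\mu_1=\abs{\mathbf{H}}$, $\mu_2=0$ and $\abs{\sigma}^2=2+\abs{\mathbf{H}}^2$.

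Finally I would match these constant data with the model. Since $\Sigma$ is flat with parallel (constant) second fundamental form, the fundamental theorem of submanifolds makes it rigid: it is congruent to any Legendrian surface in $\mathbb{S}^5$ sharing the same first and second fundamental forms. Computing $\abs{\mathbf{H}}$, $\abs{\mathbf{B}}^2$ and the symmetry type of $\sigma$ for the Calabi torus described in the appendix and checking that they coincide with the values just obtained then identifies $\Sigma$ with the Calabi torus. I expect the two genuine obstacles to be, first, establishing $\nabla\sigma\equiv0$ rigorously—either justifying holomorphicity of the cubic differential in the nonminimal CSL regime, or controlling the sign of the algebraic integrand produced by \eqref{eq:simons}—and, second, the explicit comparison showing that the resulting constant-coefficient Gauss--Codazzi system admits, up to congruence, no flat CSL torus other than the Calabi torus.
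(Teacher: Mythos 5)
Your opening reduction agrees with the paper: $\nabla(J\mathbf{H})=0$ makes $\abs{\mathbf{H}}$ a positive constant, and \autoref{lem:bochner} (or simply the flatness forced by a nowhere-zero parallel field) gives $\kappa\equiv 0$, so $\Sigma$ is a flat torus. Your main route to $\nabla\sigma\equiv0$ --- reading the trace-free part of $\sigma$ as a cubic differential, proving it holomorphic, and invoking Liouville on the torus --- is genuinely different from the paper's, which instead applies Simons' identity \eqref{eq:simons} to the single function $f=3\sigma_{111}-2\mu_1$, shows $\Delta f=0$ hence $f$ is constant, and recovers the remaining components from the Gauss equation. Your route does work, but not for the reason you give: harmonicity of $\mu$ only yields $\Div\mathring{\sigma}=\tfrac12\nabla\mu$, which need not vanish, so the CSL condition alone does not make the cubic differential holomorphic; it is the full parallelism $\nabla\mu=0$ (available here by hypothesis) that kills the $\bar{\partial}$-term. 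You should also drop the parenthetical alternative of integrating \eqref{eq:simons}: once the $\mu_{i,jk}$ terms are removed, the remaining cubic-in-$\sigma$ integrand has no sign without a pinching assumption, so it cannot ``force $\nabla\sigma=0$.''

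The genuine gap is the endgame. The three scalars $\mu_1=\abs{\mathbf{H}}$, $\mu_2=0$ and $\abs{\sigma}^2=2+\abs{\mathbf{H}}^2$ do \emph{not} determine the four constants $\sigma_{111},\sigma_{112},\sigma_{122},\sigma_{222}$: they cut out a one-parameter family (consistently, Calabi tori with prescribed $\abs{\mathbf{H}}$ form a one-parameter family), so ``computing $\abs{\mathbf{H}}$ and $\abs{\mathbf{B}}^2$ for the model and checking they coincide'' cannot identify $\Sigma$, and the uniqueness part of the fundamental theorem of submanifolds requires matching the full second fundamental form, not these traces. What is missing is exactly what the paper supplies: rotate the parallel frame so that $\sigma_{112}=0$ at a point maximizing $\sigma(X,X,X)$ (hence everywhere, since $\sigma$ is parallel), and record the pointwise Gauss relation \eqref{eq:B2}, namely $1+\sigma_{111}\sigma_{122}-\sigma_{122}^2=0$, which is the extra equation your list of invariants lacks. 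Only with that normalization and relation can one solve for $r_1,\dotsc,r_4$, match the explicit second fundamental form of the Calabi torus, and conclude by local uniqueness (\cite[Theorem 1.5]{LW11}) together with closedness. As written, your final matching step does not close.
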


\begin{proof}
Denote $e_1=\frac{J\mathbf{H}}{\abs{\mathbf{H}}}$ and $\set{e_1,e_2}$ be the global orthonormal frame of $T\Sigma$. We consider the function $f\coloneqq 3\sigma_{111}-2\mu_1=\sigma_{111}-2\sigma_{122}$ where $\mu_1=\abs{\mathbf{H}}$ is a positive constant. The Simons' identity \eqref{eq:simons} gives
\begin{align*}
\dfrac13\Delta f=&-2\mu_{1}+\sum_{t}\sigma_{11t}^2\mu_{1}+3\sigma_{111}+2\sum_{l,s,t}\sigma_{1sl}\sigma_{1lt}\sigma_{1ts}-3\sum_{l,s,t}\sigma_{tl1}\sigma_{tls}\sigma_{11s}\\
=&f+\sum_{t}\sigma_{11t}^2\mu_{1}+2\sum_{l,s,t}\sigma_{1sl}\sigma_{1lt}\sigma_{1ts}-3\sum_{l,t}\sigma_{1tl}^2\sigma_{111}
-3\sum_{l,t}\sigma_{1tl}\sigma_{2tl}\sigma_{112}.
\end{align*}
On one hand, notice that $0=\mu_2=\sigma_{112}+\sigma_{222}$, we have
\begin{align*}
\sum_{l,t}\sigma_{1tl}\sigma_{2tl}=&\sigma_{111}\sigma_{211}+2\sigma_{112}\sigma_{212}+\sigma_{122}\sigma_{222}\\
=&\left(\sigma_{111}+\sigma_{122}\right)\sigma_{112}.
\end{align*}
On the other hand, assume the eigenvalues of the symmetric matrix $\left(\sigma_{1tl}\right)_{1\leq t,l\leq2}$ are $c_1, c_2$, then
\begin{align*}
\sum_{s,l,t}\sigma_{1sl}\sigma_{1lt}\sigma_{1ts}=&c_1^3+c_2^3\\
=&\left(c_1+c_2\right)\left(c_1^2+c_2^2-c_1c_2\right)\\
=&\left(\sigma_{111}+\sigma_{122}\right)\left(\sum_{l,t=1}^2\sigma_{1tl}^2-\left(\sigma_{111}\sigma_{122}-\sigma_{112}^2\right)\right).
\end{align*}
Thus,
\begin{align*}
&2\sum_{l,s,t}\sigma_{1sl}\sigma_{1lt}\sigma_{1ts}-3\sum_{l,t}\sigma_{1tl}^2\sigma_{111}-3\sum_{l,t}\sigma_{1tl}\sigma_{2tl}\sigma_{112}\\
=&\left(2\sigma_{122}
-\sigma_{111}\right)\sum_{l,t=1}^2\sigma_{1tl}^2-2\mu_1\sigma_{111}\sigma_{122}-\mu_1\sigma_{112}^2.
\end{align*}
Therefore,
\begin{align*}
\dfrac13\Delta f=&f+\sigma_{111}^2\mu_{1}-f\sum_{t,l=1}^2\sigma_{1tl}^2-2\mu_1\sigma_{111}\sigma_{122}\\
=&f\left[1+\sigma_{111}\mu_1-\sum_{t,l}\sigma_{1tl}^2\right]\\
=&f\Delta\mu_1\\
=&0.
\end{align*}
Consequently, $f$ is a constant. We conclude that $\sigma_{111}, \sigma_{122}$ both are constants. From \autoref{lem:Gauss} we see that $\kappa=0$, which implies from the Gauss equation $2\kappa=2+\abs{\mathbf{H}}^2-\abs{\mathbf{B}}^2$ that $\abs{\mathbf{B}}^2$ is a constant, we get that both $\sigma_{112}=-\sigma_{222}$ are constants. Up to now, we show that $\sigma$ is covariant constant (see also \cite{Yau}).

 We want to show that $\Sigma$ is a Calabi torus defined in the appendix.

At a point $p\in\Sigma$, we choose an orthonormal frame $\{e_1, e_2\}$ on $T_p\Sigma$ such that
$$\sigma(e_1,e_1,e_1)=\max_{\abs{X}=1, X\in T_pM}\sigma(X,X,X).$$ Then since $f(t)\coloneqq\sigma\left(\cos(t)e_1+\sin(t)e_2,\cos(t)e_1+\sin(t)e_2,\cos(t)e_1+\sin(t)e_2\right)$ achieves its maximum value at $t=0$, we see that $f'(0)=0$, which implies that $\sigma_{112}(p)=0$.  Since $\nabla\left(J\mathbf{H}\right)=0$, we see that the unit smooth orthogonal vector field of $J\mathbf{H}$, say $\upsilon$ is also parallel. Remember that we have proved $\sigma$ is a parallel 3-symmetric tensor. Assume that $(e_1,e_2)(p)=D(J\mathbf{H}, \upsilon)(p)$, where $D$ is a constant matrix. Then we extend $\{e_1,e_2\}$ to get a global orthonormal tangent vector frame on $\Sigma$ by $(E_1,E_2)\coloneqq D(J\mathbf{H}, \upsilon)$. Moreover, $E_1$ and $E_2$ are two unit parallel vector fileds on $\Sigma$.

We claim that
\begin{align}\label{eq:B2}
1+\sigma_{111}\sigma_{122}-\sigma_{122}^2=0.
\end{align}
 Assume that $\{\omega_1, w_2\}$ is the dual of $\{E_1,E_2\}$. Then the connection coefficient $\omega_{12}$ of $\Sigma$ equals zero since  $E_1$ is parallel. Then
\begin{align*}
0=&d\omega_{12}
\\=&-\omega_{13}\wedge\omega_{32}-\omega_{14}\wedge\omega_{42}+\omega_1\wedge\omega_2
\\=&\sum_{j,k}\sigma_{11j}\sigma_{12k}\omega_j\wedge\omega_k+\sum_{j,k}\sigma_{21j}\sigma_{22k}\omega_j\wedge\omega_k+\omega_1\wedge\omega_2
\\=&(1+\sigma_{111}\sigma_{122}-\sigma_{122}^2)\omega_1\wedge\omega_2.
\end{align*}

Due to \eqref{eq:B2}, we choose four nonzero constants $r_1, r_2, r_3, r_4$ such that $r_1^2+r_2^2=r_3^2+r_4^2=1$ and
\begin{align*}
\sigma_{111}=\dfrac{r_2}{r_1}-\dfrac{r_1}{r_2}, \quad \sigma_{122}=\dfrac{r_2}{r_1},\quad \sigma_{222}=\dfrac{1}{r_1}\left(\dfrac{r_4}{r_3}-\dfrac{r_3}{r_4}\right).
\end{align*}
Comparing  with the Calabi torus  define by $r_1, r_2, r_3, r_4$ stated in the appendix, we know that $\Sigma$ is locally isometric to the Calabi torus (see also \cite[Theorem 1.5]{LW11}). Since $\Sigma$ is closed, $\Sigma$ coincides with the Calabi torus.
\end{proof}

\begin{proof}[Proof the 2-dimensional case of \autoref{thm:main0}] By Gauss equation and assumption, we have $2\kappa=2+\abs{\mathbf{H}}^2-\abs{\mathbf{B}}^2\geq0.$ According to \autoref{lem:bochner}, since $Ric\left(J\mathbf{H},J\mathbf{H}\right)=\kappa \abs{\mathbf{H}}^2\geq0$, we know that
\begin{align*}
\nabla\left(J\mathbf{H}\right)\equiv0,\quad\kappa\abs{\mathbf{H}}^2\equiv0.
\end{align*}
If $M$ is minimal, then $M$ is either the totally geodesic sphere $\mathbb{S}^2$ or a flat minimal Legendrian torus by \autoref{legendrian mimimal}. If $M$ is not minimal, then the conclusion follows from \autoref{prop1}.
\end{proof}

\section{Rigidity results for closed CSL subamnifolds in the unit sphere}
In this section, we assume $M^n (n\geq3)$ is a closed CSL submanifolds in $\mathbb{S}^{2n+1}$.

Put
\begin{align*}
\sigma_{ijk}\coloneqq\mathring{\sigma}_{ijk}+\dfrac{1}{n+2}\left(\mu_i\delta_{jk}+\mu_j\delta_{ki}+\mu_k\delta_{ij}\right).
\end{align*}
Notice that $(\mathring{\sigma}_{ijk})$ is 3-symmetric and is trace free with any 2 symbols.

\begin{lem}\label{lem:main0}
Assume at some $p\in M$
\begin{align}\label{eq:basic1}
\abs{\mathbf{B}}^2\leq\dfrac{(n-1)(n+2)}{n}+\dfrac{n^2+3n-2}{2n^2}\abs{\mathbf{H}}^2-\dfrac{(n-1)(n-2)\abs{\mathbf{H}}\sqrt{4n+\abs{\mathbf{H}}^2}}{2n^2},
\end{align}
then at $p$ we have $Ric\left(J\mathbf{H},J\mathbf{H}\right)\geq0$. Moreover, if $\mathbf{H}\neq0$ then $Ric\left(J\mathbf{H},J\mathbf{H}\right)=0$ if and only if
\begin{align}\label{eq:basic_limit}
\mathbf{B}\left(J\mathbf{H},J\mathbf{H}\right)=\lambda_1 \abs{\mathbf{H}}\mathbf{H},\quad\mathbf{B}\left(J\mathbf{H},X\right)=\lambda_2\abs{\mathbf{H}} JX,\quad \mathbf{B}\left(X,Y\right)=\dfrac{\lambda_2}{\abs{\mathbf{H}}}\hin{X}{Y}\mathbf{H},\quad\forall X, Y\perp J\mathbf{H},
\end{align}
where $\lambda_1,\lambda_2$ satisfies
\begin{align*}
1+\lambda_1\lambda_2-\lambda_2^2=0.
\end{align*}
\end{lem}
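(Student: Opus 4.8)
The plan is to turn the claim into a purely algebraic statement about the fully symmetric tensor $\sigma$, using the Gauss equation, and then to solve a constrained optimization. Assuming $\mathbf{H}(p)\neq0$ (the case $\mathbf{H}=0$ being immediate), I would first choose an orthonormal frame $\set{e_i}$ at $p$ with $\mu_j=\hin{\mathbf{H}}{Je_j}$ normalized so that $\mu_1=\abs{\mathbf{H}}$ and $\mu_j=0$ for $j\geq2$; then $J\mathbf{H}=-\abs{\mathbf{H}}e_1$ and $\mathbf{H}=\abs{\mathbf{H}}Je_1$. Contracting the Gauss equation in the $e_1$ direction, and using $\sum_i\mathbf{B}(e_i,e_i)=\mathbf{H}$ together with $\mathbf{B}(e_i,e_j)=\sum_k\sigma_{ijk}Je_k$, I expect
\begin{align*}
Ric\left(J\mathbf{H},J\mathbf{H}\right)=\abs{\mathbf{H}}^2\left[(n-1)+\sigma_{111}\abs{\mathbf{H}}-\sum_{i,k}\sigma_{1ik}^2\right].
\end{align*}
Thus the whole lemma is equivalent to the pointwise estimate $\sum_{i,k}\sigma_{1ik}^2-\sigma_{111}\abs{\mathbf{H}}\leq n-1$ under \eqref{eq:basic1}, together with the analysis of its equality case.

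\textbf{Algebraic set-up.}
I would read the bracket through the shape operator in the mean-curvature direction, $A\coloneqq(\sigma_{1ik})_{i,k}$, which is symmetric with $\trace A=\mu_1=\abs{\mathbf{H}}$, $A_{11}=\sigma_{111}$ and $\sum_{i,k}\sigma_{1ik}^2=\trace(A^2)$. Splitting $A$ into the entry $\sigma_{111}$, the row $(\sigma_{11j})_{j\geq2}$, and the block $A'=(\sigma_{1jk})_{j,k\geq2}$, full symmetry of $\sigma$ imposes $\trace A'=\abs{\mathbf{H}}-\sigma_{111}$, while the all-large components $(\sigma_{jkl})_{j,k,l\geq2}$ are tied to the row through the trace-free identity $\abs{\mathbf{B}}^2=\abs{\mathring{\sigma}}^2+\frac{3}{n+2}\abs{\mathbf{H}}^2$. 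The target then becomes: maximize $\trace(A^2)-\sigma_{111}\abs{\mathbf{H}}$ subject to $\abs{\mathbf{B}}^2\leq$ the right-hand side of \eqref{eq:basic1} and these symmetry/trace constraints.

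\textbf{Core estimate and origin of the threshold.}
Two sharp elementary inequalities drive the optimization: the trace Cauchy--Schwarz bound $\abs{A'}^2\geq(\abs{\mathbf{H}}-\sigma_{111})^2/(n-1)$, and the minimal-norm estimate $\sum_{j,k,l\geq2}\sigma_{jkl}^2\geq\frac{3}{n+1}\sum_{j\geq2}\sigma_{11j}^2$ forced by symmetry (the row $\sigma_{11j}$ being minus the trace of the symmetric $3$-tensor $(\sigma_{jkl})_{j,k,l\geq2}$ in $n-1$ variables). Feeding these into the budget $\abs{\mathbf{B}}^2\leq$ (bound), I expect that populating the off-diagonal row $\sigma_{11j}$ or the traceless part of $A'$ can only lower the objective, so that the maximum is attained on the boundary of the feasible set, where $\sigma_{11j}=0$, $\sigma_{jkl}=0$ $(j,k,l\geq2)$ and $A'=\lambda_2 I$. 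There the problem collapses to a one-variable quadratic in $\sigma_{111}$, and demanding that its extremal value not exceed $n-1$ is, after solving the quadratic, precisely \eqref{eq:basic1}; the relevant root is what manufactures the term $\abs{\mathbf{H}}\sqrt{4n+\abs{\mathbf{H}}^2}$. This establishes $Ric\left(J\mathbf{H},J\mathbf{H}\right)\geq0$.

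\textbf{Equality and the main difficulty.}
Running the inequalities backwards in the equality case gives exactly \eqref{eq:basic_limit}: $\sigma_{11j}=0$ yields $\mathbf{B}(J\mathbf{H},X)=\lambda_2\abs{\mathbf{H}}JX$, the pure-trace form $A'=\lambda_2 I$ with the large block vanishing yields $\mathbf{B}(X,Y)=\frac{\lambda_2}{\abs{\mathbf{H}}}\hin{X}{Y}\mathbf{H}$, and $\mathbf{B}(J\mathbf{H},J\mathbf{H})=\lambda_1\abs{\mathbf{H}}\mathbf{H}$ with $\lambda_1=\sigma_{111}$; saturation of the budget forces $\lambda_1+(n-1)\lambda_2=\abs{\mathbf{H}}$, and substituting this into the vanishing of the quadratic collapses it to $1+\lambda_1\lambda_2-\lambda_2^2=0$. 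I expect the genuine obstacle to be the sharp slice inequality of the third step, namely ruling out that spreading curvature into the off-diagonal directions $\mathbf{B}(J\mathbf{H},X)$ could drive the Ricci curvature below zero; this is the only place where the precise constants and the threshold with $\sqrt{4n+\abs{\mathbf{H}}^2}$ are actually needed, the remainder being bookkeeping via the Gauss equation and the trace-free splitting.
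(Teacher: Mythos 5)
Your proposal follows essentially the same route as the paper's proof: the same Gauss-equation formula $Ric_{11}=n-1+\sigma_{111}\abs{\mathbf{H}}-\sum_{j,k}\sigma_{1jk}^2$, the same trace decomposition of $\sigma$, the same two sharp inequalities (Cauchy--Schwarz on the diagonal of the lower block and the trace estimate $\sum_{j,k,l\geq 2}\sigma_{jkl}^2\geq\tfrac{3}{n+1}\sum_{j\geq2}\sigma_{11j}^2$), the same reduction to a quadratic whose root produces $\sqrt{4n+\abs{\mathbf{H}}^2}$, and the same equality analysis yielding \eqref{eq:basic_limit} and $1+\lambda_1\lambda_2-\lambda_2^2=0$. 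The only difference is presentational: you phrase the argument as a constrained optimization, while the paper chains the corresponding inequalities on $\mathring{\sigma}$ directly.
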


\begin{proof}
Without loss of generality, assume $\mathbf{H}\neq0$ at $p$. Moreover, assume $\mu_1=\abs{\mathbf{H}}>0$ and $\mu_j=0$ for all $j>1$.
A direct calculation yields
\begin{align*}
Ric_{11}=&n-1+\sum_{j}\sigma_{11j}\mu_j-\sum_{j,k}\sigma_{1jk}^2\\
=&n-1+\dfrac{n-2}{n+2}\mathring{\sigma}_{111}\mu_1+\dfrac{2(n-1)}{(n+2)^2}\mu_1^2-\sum_{j,k}\mathring{\sigma}_{1jk}^2\\
\geq&n-1-\dfrac{n-2}{n+2}\abs{\mathring{\sigma}_{111}}\abs{\mu_1}+\dfrac{2(n-1)}{(n+2)^2}\mu_1^2-\sum_{j,k}\mathring{\sigma}_{1jk}^2.
\end{align*}
The equality holds if and only if
\begin{align}\label{eq:eq1}
\mathring{\sigma}_{111}\leq0.
\end{align}

Notice that
\begin{align*}
\sum_{i,j,k}\mathring{\sigma}_{ijk}^2=&\mathring{\sigma}_{111}^2+3\sum_{j=2}^n\mathring{\sigma}_{11j}^2+3\sum_{j=2}^n\mathring{\sigma}_{1jj}^2+6\sum_{2\leq j<k\leq n}\mathring{\sigma}_{1jk}^2+\sum_{i=2}^n\mathring{\sigma}_{iii}^2+3\sum_{2\leq i\neq j\leq n}\mathring{\sigma}_{ijj}^2+6\sum_{2\leq i<j<k\leq n}\mathring{\sigma}_{ijk}^2\\
\geq&\mathring{\sigma}_{111}^2+3\sum_{j=2}^n\mathring{\sigma}_{11j}^2+3\sum_{j=2}^n\mathring{\sigma}_{1jj}^2+6\sum_{2\leq j<k\leq n}\mathring{\sigma}_{1jk}^2+\dfrac{3}{n+1}\sum_{i=2}^n\left(\sum_{j=2}^n\mathring{\sigma}_{ijj}\right)^2+6\sum_{2\leq i<j<k\leq n}\mathring{\sigma}_{ijk}^2\\
=&\mathring{\sigma}_{111}^2+3\sum_{j=2}^n\mathring{\sigma}_{1jj}^2+\dfrac{3(n+2)}{n+1}\sum_{j=2}^n\mathring{\sigma}_{11j}^2+6\sum_{2\leq j<k\leq n}\mathring{\sigma}_{1jk}^2+6\sum_{2\leq i<j<k\leq n}\mathring{\sigma}_{ijk}^2\\
\geq&\dfrac{n+2}{n}\left(\mathring{\sigma}_{111}^2+\sum_{j=2}^n\mathring{\sigma}_{1jj}^2\right)+\dfrac{3(n+2)}{n+1}\sum_{j=2}^n\mathring{\sigma}_{11j}^2+6\sum_{2\leq j<k\leq n}\mathring{\sigma}_{1jk}^2+6\sum_{2\leq i<j<k\leq n}\mathring{\sigma}_{ijk}^2\\
\geq&\dfrac{n+2}{n}\sum_{j,k}\mathring{\sigma}_{1jk}^2\\
\geq&\dfrac{n+2}{n-1}\abs{\mathring{\sigma}_{111}}^2.
\end{align*}
And the equality holds if and only if (the assumption $n\geq3$ here is essential)
\begin{equation}\label{eq:eq2}
\begin{split}
\mathring{\sigma}_{11j}=0,\quad\mathring{\sigma}_{1jk}=0,\quad\mathring{\sigma}_{1jj}=\mathring{\sigma}_{1kk},\quad\forall 2\leq j<k\leq n,\\
\mathring{\sigma}_{ijk}=0,\quad\forall 2\leq i, j, k\leq n.
\end{split}
\end{equation}

Therefore, we obtain
\begin{align*}
Ric_{11}\geq&n-1-\dfrac{n-2}{n+2}\sqrt{\dfrac{n-1}{n+2}\sum_{i,j,k}\mathring{\sigma}_{ijk}^2}\abs{\mu_1}+\dfrac{2(n-1)}{(n+2)^2}\mu_1^2
-\dfrac{n}{n+2}\sum_{i,j,k}\mathring{\sigma}_{ijk}^2.
\end{align*}
Since the assumption \eqref{eq:basic1} is equivalent to
\begin{align*}
n-1-\dfrac{n-2}{n+2}\sqrt{\dfrac{n-1}{n+2}\sum_{i,j,k}\mathring{\sigma}_{ijk}^2}\abs{\mu_1}+\dfrac{2(n-1)}{(n+2)^2}\mu_1^2
-\dfrac{n}{n+2}\sum_{i,j,k}\mathring{\sigma}_{ijk}^2\geq0,
\end{align*}
we complete the proof of the first part.

 If $Ric\left(J\mathbf{H},J\mathbf{H}\right)=0$, then
 \eqref{eq:eq2} is equivalent to \eqref{eq:basic_limit} while \eqref{eq:eq1} is equivalent to
 \begin{align*}
 \lambda_1\leq\dfrac{3}{n+2}\abs{\mathbf{H}}.
 \end{align*}
 We conclude that
 \begin{align*}
 \lambda_2=\dfrac{1}{n-1}\left(\abs{\mathbf{H}}-\lambda_1\right)\geq\dfrac{1}{n+2}\abs{\mathbf{H}}\geq\dfrac{1}{3}\lambda_1.
 \end{align*}
According to \eqref{eq:basic_limit},
 \begin{align*}
 \abs{\mathbf{B}}^2=\lambda_1^2+3(n-1)\lambda_2^2,\quad\abs{\mathbf{H}}^2=\left(\lambda_1+(n-1)\lambda_2\right)^2.
 \end{align*}
 Thus,
 \begin{align*}
 \sum_{i,j,k}\mathring{\sigma}_{ijk}^2=\abs{\mathbf{B}}^2-\dfrac{3}{n+2}\abs{\mathbf{H}}^2=\dfrac{n-1}{n+2}\left(3\lambda_2-\lambda_1\right)^2,\quad\mu_1=\abs{\mathbf{H}}=\lambda_1+(n-1)\lambda_2.
 \end{align*}
 Now the equality in \eqref{eq:basic1} is equivalent to
\begin{align*}
&n-1-\dfrac{n-2}{n+2}\sqrt{\dfrac{n-1}{n+2}\sum_{i,j,k}\mathring{\sigma}_{ijk}^2}\abs{\mu_1}+\dfrac{2(n-1)}{(n+2)^2}\mu_1^2-\dfrac{n}{n+2}\sum_{i,j,k}\mathring{\sigma}_{ijk}^2=0,\\
\Longleftrightarrow\quad&\left(\sqrt{\dfrac{n+2}{n-1}\sum_{i,j,k}\mathring{\sigma}_{ijk}^2}+\dfrac{n-2}{2n}\abs{\mu_1}\right)^2=\dfrac{(n+2)^2}{4n^2}\left(\mu_1^2+4n\right),\\
\Longleftrightarrow\quad&\left(\left(3\lambda_2-\lambda_1\right)+\dfrac{n-2}{2n}\left(\lambda_1+(n-1)\lambda_2\right)\right)^2=\dfrac{(n+2)^2}{4n^2}\left(\left(\lambda_1+(n-1)\lambda_2\right)^2+4n\right),\\
\Longleftrightarrow\quad&\left((n+1)\lambda_2-\lambda_1\right)^2=\left(\lambda_1+(n-1)\lambda_2\right)^2+4n,
\end{align*}
which is equivalent to
\begin{align*}
1+\lambda_1\lambda_2-\lambda_2^2=0.
\end{align*}

\end{proof}

\begin{proof}[Proof of \autoref{thm:main0} when $n\geq3$]
From \autoref{lem:main0} we have $Ric\left(J\mathbf{H},J\mathbf{H}\right)\geq0$ and \autoref{lem:bochner} implies that $\nabla\left(J\mathbf{H}\right)=0$. Therefore we have either $\mathbf{H}=0$ or $\mathbf{H}\neq0$ and the second fundamental form of $M$ in $\mathbf{S}^{2n+1}$ is given by \eqref{eq:basic_limit} where $\lambda_1$ and $\lambda_2$ are two smooth functions satisfying
\begin{align*}
1+\lambda_1\lambda_2-\lambda_2^2=0.
\end{align*}

Now assume $\mathbf{H}\neq0$. Let $e_1=\frac{J\mathbf{H}}{\abs{\mathbf{H}}}$, then
\begin{align*}
\sigma_{111}=\lambda_1,\quad\sigma_{11j}=0,\quad\sigma_{1jk}=\lambda_2\delta_{jk},\quad\sigma_{ijk}=0,\quad\forall 2\leq i, j, k\leq n.
\end{align*}
According to Simons' identity \eqref{eq:simons}, we have
\begin{align*}
\Delta\lambda_1=&-2\mu_{1}+\sum_{t}\sigma_{11t}^2\mu_{1}+(n+1)\sigma_{111}+2\sum_{l,s,t}\sigma_{1sl}\sigma_{1lt}\sigma_{1ts}
-3\sum_{l,s,t}\sigma_{tl1}\sigma_{tls}\sigma_{11s}\\
=&-2\mu_{1}+\lambda_1^2\mu_{1}+(n+1)\lambda_1+2\left(\lambda_1^3+(n-1)\lambda_2^3\right)-3\left(\lambda_1^2+(n-1)\lambda_2^2\right)\lambda_1\\
=&-2(n-1)\lambda_2+(n-1)\lambda_1^2\lambda_2+(n-1)\lambda_1+2(n-1)\lambda_2^3-3(n-1)\lambda_2^2\lambda_1\\
=&(n-1)\left(\lambda_1-2\lambda_2\right)\left(1+\lambda_1\lambda_2-\lambda_2^2\right)\\
=&0.
\end{align*}
Hence $\lambda_1$ and $\lambda_2$ are constants. Therefore there exists two constants $r_1, r_2$ such that $r_1^2+r_2^2=1$ and
\begin{align*}
\lambda_1=\dfrac{r_2}{r_1}-\dfrac{r_1}{r_2},\quad\lambda_2=\dfrac{r_2}{r_1}.
\end{align*}

Comparing with the Calabi product Legendrian immersion of a totally geodesic Legendrian immersion and a point determined by $r_1, r_2$ stated in the appendix,
we may conclude that $M$ is locally isometric to the Calabi product Legendrian immersion of a totally geodesic Legendrian immersion and a point (see also \cite[Theorem 1.5]{LW11}).  Since $M$ is closed, we conclude that $M$ must be a Calabi product Legendrian immersion of a totally geodesic Legendrian immersion and a point. This completes the proof of \autoref{thm:main0}.
\end{proof}

As an application of \autoref{thm:main0}, we give a proof of \autoref{thm:main}. Firstly, we prove
\begin{theorem}\label{thm:epsilon}
Suppose $M^n(n\geq3)$ be a closed CSL submanifold of $\mathbb{S}^{2n+1}$ and for some $\varepsilon>0$ we have
\begin{align*}
\abs{\mathbf{B}}^2\leq&\dfrac{(n-1)(n+2)}{n}-\dfrac{(n-1)(n-2)\varepsilon}{n}+\left(\dfrac{n^2+3n-2}{2n^2}-\dfrac{(n-1)(n-2)}{4n^2}\left(\varepsilon+\dfrac{1}{\varepsilon}\right)\right)\abs{\mathbf{H}}^2.
\end{align*}

\begin{enumerate}
\item If $\varepsilon\geq1$, then $M$ is a minimal Legendrian immersion.
\item If $0<\varepsilon<1$, then $M$ is either a minimal Legendrian immersion or the Calabi product Legendrian immersion of the totally geodesic $\psi:\mathbb{S}^{n-1}\To\mathbb{S}^{2n-1}$ and a point.
\end{enumerate}
\end{theorem}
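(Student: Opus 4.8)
The plan is to show that, for every $\varepsilon>0$, the pinching hypothesis of \autoref{thm:epsilon} is no weaker than the hypothesis \eqref{eq:basic} of \autoref{thm:main0}, so that the full classification of \autoref{thm:main0} becomes available at once; the role of $\varepsilon$ is then confined entirely to the equality case, and it is precisely there that the dichotomy between $\varepsilon\geq1$ and $0<\varepsilon<1$ arises.

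First I would compare the two pinching functions pointwise. Writing $h\coloneqq\abs{\mathbf{H}}$, the right-hand side of \eqref{eq:basic} minus the right-hand side of the present hypothesis equals $\dfrac{(n-1)(n-2)}{4n^2}$ times
\begin{align*}
\varepsilon\left(4n+h^2\right)+\dfrac{h^2}{\varepsilon}-2h\sqrt{4n+h^2}.
\end{align*}
Since $n\geq3$ the prefactor is strictly positive, and the AM--GM inequality yields $\varepsilon(4n+h^2)+\frac{h^2}{\varepsilon}\geq2\sqrt{(4n+h^2)h^2}=2h\sqrt{4n+h^2}$, so this bracket is nonnegative. Hence the $\varepsilon$-bound never exceeds the bound in \eqref{eq:basic}, and the hypothesis of \autoref{thm:epsilon} forces \eqref{eq:basic} at every point of $M$. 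By \autoref{thm:main0} it follows that $M$ is either a minimal Legendrian immersion or a Calabi product Legendrian immersion of a totally geodesic Legendrian immersion, which (being an $(n-1)$-dimensional totally geodesic Legendrian immersion in $\mathbb{S}^{2n-1}$) is necessarily $\psi\colon\mathbb{S}^{n-1}\To\mathbb{S}^{2n-1}$, and a point.

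It then remains to determine when the Calabi product can actually occur. In the non-minimal branch of \autoref{thm:main0} one has $Ric\left(J\mathbf{H},J\mathbf{H}\right)\equiv0$, and the equality discussion inside \autoref{lem:main0} shows that $Ric_{11}=0$ forces equality in \eqref{eq:basic1}; thus such an $M$ attains equality in \eqref{eq:basic} identically, with $h=\abs{\mathbf{H}}$ a positive constant. If this $M$ also satisfied the $\varepsilon$-bound, then $\abs{\mathbf{B}}^2$ would equal the bound of \eqref{eq:basic}, which is $\geq$ the $\varepsilon$-bound, which is in turn $\geq\abs{\mathbf{B}}^2$; all three therefore coincide, forcing equality in the AM--GM step above, that is $\varepsilon=h/\sqrt{4n+h^2}$. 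Because $h>0$ this ratio is strictly less than $1$. Consequently, when $\varepsilon\geq1$ the Calabi product is excluded and $M$ must be minimal, while when $0<\varepsilon<1$ both possibilities survive and $M$ is either minimal or the stated Calabi product of $\psi\colon\mathbb{S}^{n-1}\To\mathbb{S}^{2n-1}$ and a point.

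The substantive analytic and geometric work is already packaged in \autoref{thm:main0} and \autoref{lem:main0}; the only genuinely new ingredient is the sharp two-variable comparison of the two pinching functions. The step that I expect to need the most care is verifying that equality in AM--GM is the \emph{exact} compatibility condition $\varepsilon=h/\sqrt{4n+h^2}$ linking $\varepsilon$ to the mean curvature of the limiting example, so that the threshold $\varepsilon=1$ is not an artifact but precisely records the supremum of $h/\sqrt{4n+h^2}$ over admissible Calabi products. I would therefore present the factorization and the AM--GM estimate in full and keep the remainder as a short appeal to the equality analysis of \autoref{lem:main0}.
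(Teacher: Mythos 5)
Your proposal is correct and follows essentially the same route as the paper: the AM--GM (Young) comparison $\varepsilon(4n+h^2)+h^2/\varepsilon\geq 2h\sqrt{4n+h^2}$ shows the $\varepsilon$-hypothesis implies \eqref{eq:basic}, after which \autoref{thm:main0} gives the dichotomy and the equality analysis of \autoref{lem:main0} pins down when the Calabi product can occur. Your only addition is to spell out explicitly that the non-minimal branch forces equality throughout and hence $\varepsilon=h/\sqrt{4n+h^2}<1$, a step the paper compresses into the remark that the inequality is strict when $\varepsilon\geq1$.
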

\begin{proof}By Young's  inequality, for every $\varepsilon>0$, we have
\begin{align*}
2\sqrt{\mathbf{H}}\sqrt{4n+\abs{\mathbf{H}}^2}\leq\dfrac{\abs{\mathbf{H}}^2}{\varepsilon}+\varepsilon\left(4n+\abs{\mathbf{H}}^2\right).
\end{align*}
The equality holds iff
\begin{align*}
\dfrac{\abs{\mathbf{H}}^2}{\varepsilon}=\varepsilon\left(4n+\abs{\mathbf{H}}^2\right).
\end{align*}
Therefore, under the assumption, we have \eqref{eq:basic}. Moreover, when $\varepsilon\geq1$, we have the strictly inequality.
\begin{enumerate}
\item When $\varepsilon\geq1$, according to \autoref{thm:main0}, we know that $M$ is minimal.
\item When $0<\varepsilon<1$, according to \autoref{thm:main0}, we know that $M$ is either a minimal Legendrian immersion or the Calabi product Legendrian immersion of the totally geodesic $\psi:\mathbb{S}^{n-1}\To\mathbb{S}^{2n-1}$ and a point.
\end{enumerate}
\end{proof}

\begin{theorem}\label{thm:main1}
If $M^n(n\geq3)$ be a closed CSL submanifold of $\mathbb{S}^{2n+1}$ and
\begin{align*}
\abs{\mathbf{B}}^2\leq&\dfrac{2(n+1)}{3}-\dfrac{n-17}{3(n+3)}\abs{\mathbf{H}}^2.
\end{align*}

\begin{enumerate}
\item If $n=3$, then $M$ is the totally geodesic Legendrian immersion.
\item If $n\geq4$, then $M$ is either the totally geodesic Legendrian immersion or is a nonminimal Calabi product Legendrian immersion of the totally geodesic $\psi:\mathbb{S}^{n-1}\To\mathbb{S}^{2n-1}$ and a point.
\end{enumerate}
\end{theorem}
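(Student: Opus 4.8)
The plan is to reduce the theorem to \autoref{thm:epsilon} by tuning its free parameter $\varepsilon$ so that the pinching hypothesis there becomes \emph{exactly} the hypothesis stated here. Matching the constant (the $\abs{\mathbf{H}}^0$) terms of the two bounds amounts to solving
\begin{align*}
\dfrac{(n-1)(n+2)}{n}-\dfrac{(n-1)(n-2)\varepsilon}{n}=\dfrac{2(n+1)}{3},
\end{align*}
whose unique solution, since $n\geq3$ forces $n-2\neq0$, is $\varepsilon=\dfrac{n+3}{3(n-1)}$. What must then be checked is that this \emph{same} value of $\varepsilon$ also matches the $\abs{\mathbf{H}}^2$-coefficients of the two bounds: a direct computation reduces the $\abs{\mathbf{H}}^2$-coefficient of \autoref{thm:epsilon} to precisely $-\dfrac{n-17}{3(n+3)}$. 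Hence the hypothesis here is literally that of \autoref{thm:epsilon} for $\varepsilon=\tfrac{n+3}{3(n-1)}$, and the nontrivial content is entirely in that theorem together with the minimal case below.

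Since $\dfrac{n+3}{3(n-1)}\geq1$ if and only if $n\leq3$, the choice of $\varepsilon$ cleanly separates the two cases of the theorem. For $n=3$ we get $\varepsilon=1$, and \autoref{thm:epsilon}(1) forces $M$ to be minimal. For $n\geq4$ we have $0<\varepsilon<1$, and \autoref{thm:epsilon}(2) leaves only two possibilities: $M$ is minimal, or $M$ is the nonminimal Calabi product Legendrian immersion of the totally geodesic $\psi:\mathbb{S}^{n-1}\To\mathbb{S}^{2n-1}$ and a point. In the latter case we are already at an asserted conclusion, so in every case it remains to show that a \emph{minimal} $M$ is totally geodesic; for such $M$ the pinching specializes (at $\abs{\mathbf{H}}=0$) to $\abs{\mathbf{B}}^2\leq\dfrac{2(n+1)}{3}$.

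This last step is the genuinely new ingredient: a higher-dimensional gap theorem for minimal Legendrian submanifolds generalizing \autoref{legendrian mimimal}. For minimal $M$ we have $\mu\equiv0$, so $\sigma=\mathring{\sigma}$ is a fully symmetric, trace-free $3$-tensor, and the Simons identity \eqref{eq:simons} collapses to
\begin{align*}
\dfrac12\Delta\abs{\sigma}^2=\abs{\nabla\sigma}^2+(n+1)\abs{\sigma}^2+2P-3Q,
\end{align*}
where $P=\sum\sigma_{ijk}\sigma_{isl}\sigma_{jlt}\sigma_{kts}$ and $Q=\sum\sigma_{ijk}\sigma_{jks}\sigma_{tli}\sigma_{tls}$. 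Integrating over the closed manifold annihilates the left-hand side, so it suffices to establish the sharp pointwise inequality $3Q-2P\leq\tfrac32\abs{\sigma}^4$ for fully symmetric trace-free cubic forms. Granting it, under $\abs{\sigma}^2\leq\tfrac{2(n+1)}{3}$ the integrand is bounded below by $\abs{\nabla\sigma}^2+\abs{\sigma}^2\bigl(n+1-\tfrac32\abs{\sigma}^2\bigr)\geq0$, whence $\nabla\sigma\equiv0$ and $\abs{\sigma}^2\bigl(n+1-\tfrac32\abs{\sigma}^2\bigr)\equiv0$; thus $\sigma$ is parallel and either $\abs{\sigma}^2\equiv0$ or $\abs{\sigma}^2\equiv\tfrac{2(n+1)}{3}$.

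Two points carry the real weight. The first is the sharp algebraic inequality $3Q-2P\leq\tfrac32\abs{\sigma}^4$: writing $Q=\trace(N^2)$ with $N_{is}=\trace(S_iS_s)$ for the symmetric matrices $S_i=(\sigma_{ijk})_{jk}$ (so $N\succeq0$ and $\trace N=\abs{\sigma}^2$) and expanding $P$ through quadruple traces of the $S_i$, I expect the trace-free conditions $\trace S_i=0$ to be exactly what pins the constant to the dimension-free value $\tfrac32$; the inequality is sharp, being an equality ($2P-3Q=-\tfrac32\abs{\sigma}^4$) for the parallel cubic form of a flat minimal Legendrian torus, the $n=2$ model in \autoref{legendrian mimimal}. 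The second is the exclusion, for $n\geq3$, of the boundary case in which $\sigma$ is parallel with $\abs{\sigma}^2\equiv\tfrac{2(n+1)}{3}$: here one invokes the classification of minimal Legendrian submanifolds with parallel second fundamental form, together with the Gauss equation, which rigidly ties $\abs{\sigma}^2$ to the intrinsic geometry and rules out the value $\tfrac{2(n+1)}{3}$ unless $\sigma\equiv0$ when $n\geq3$ --- this being precisely where the flat torus survives only at $n=2$. The sharp cubic-form inequality, together with this boundary rigidity for $n\geq3$, is the main obstacle; everything else is the bookkeeping reduction of the first two paragraphs.
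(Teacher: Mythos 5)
Your proposal follows the paper's proof essentially step for step: the same choice $\varepsilon=\frac{n+3}{3(n-1)}$ in \autoref{thm:epsilon} (your coefficient computations, giving $\frac{2(n+1)}{3}$ and $-\frac{n-17}{3(n+3)}$, are exactly right), the same reduction to a minimal $M$ with $\abs{\mathbf{B}}^2\leq\frac{2(n+1)}{3}$, and the same Simons-identity integration. The one step you defer as the ``main obstacle'' --- the pointwise bound $3Q-2P\leq\frac32\abs{\sigma}^4$ --- is, after the standard rewriting $2P-3Q=-\sum_{i,j}\abs{[A_i,A_j]}^2-\sum_{i,j}\hin{A_i}{A_j}^2$ with $A_i=(\sigma_{ijk})_{jk}$, precisely the Li--Li/Chen--Xu matrix inequality, which holds for \emph{arbitrary} symmetric matrices (the trace-free structure plays no role in pinning the constant $\frac32$, contrary to your expectation) and which the paper simply cites; likewise the exclusion of the boundary value $\abs{\sigma}^2\equiv\frac{2(n+1)}{3}$ for $n\geq3$ is done in the paper via the equality analysis of that same matrix inequality (which forces the Veronese surface and hence $n=2$), rather than via a classification of parallel second fundamental forms, though your alternative route should also work.
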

\begin{proof}Choose $\varepsilon=\frac{n+3}{3(n-1)}(n\geq3)$ as in \autoref{thm:epsilon}. Since $\varepsilon\geq1$ iff $n=3$, when $n=3$, $M$ is minimal with $\abs{\mathbf{B}}^2\leq\frac{2(n+1)}{3}$.   It remains to consider the case that $n\geq3$,  $M$ is minimal and
\begin{align*}
\abs{\mathbf{B}}^2\leq\dfrac{2(n+1)}{3}.
\end{align*}

According to Simons' identity \eqref{eq:simons}, we have
\begin{align*}
\dfrac12\Delta\abs{\sigma}^2=&\abs{\nabla\sigma}^2+(n+1)\abs{\sigma}^2+2\sum_{i,j,k,l,s,t}\sigma_{isl}\sigma_{jlt}\sigma_{kts}\sigma_{ijk}-
3\sum_{i,j,k,l,s,t}\sigma_{tli}\sigma_{tls}\sigma_{jks}\sigma_{ijk}.
\end{align*}
Define
\begin{align*}
A_{i}=\left(\sigma_{ijk}\right)_{1\leq j, k\leq n},\quad 1\leq i\leq n,
\end{align*}
then (see \cite{YMI})
\begin{align*}
\dfrac12\Delta\abs{\sigma}^2=&\abs{\nabla\sigma}^2+(n+1)\abs{\sigma}^2-\sum_{i,j}\abs{[A_i,A_j]}^2-\sum_{i,j}\hin{A_i}{A_j}^2\\
\geq&\abs{\nabla\sigma}^2+(n+1)\abs{\sigma}^2-\dfrac{3}{2}\left(\sum_{i}\abs{A_i}^2\right)^2\\
=&\abs{\nabla\sigma}^2+(n+1)\abs{\sigma}^2-\dfrac{3}{2}\abs{\sigma}^4.
\end{align*}
By assumption, we have
\begin{align*}
\dfrac12\Delta\abs{\sigma}^2\geq\abs{\nabla\sigma}^2.
\end{align*}
Thus $\sigma\equiv0$ or $\abs{\sigma}\equiv\frac{2(n+1)}{3}$. Following the same argument of \cite{CX} or \cite{LiLi}, when $\abs{\sigma}\equiv\frac{2(n+1)}{3}$, we must have $n=2$, hence the last case can not happen. Therefore $M$ is totally geodesic.

\end{proof}

\begin{proof}[Proof of \autoref{thm:main}]
The condition \eqref{eq:main} in \autoref{thm:main} is just as the case $\varepsilon=1$ as in \autoref{thm:epsilon}. We conclude that $M$ is minimal and $\abs{\mathbf{B}}^2\leq\frac{4(n-1)}{n}$. Since $\frac{4(n-1)}{n}\leq\frac{2(n+1)}{3}$ always holds when $n\geq3$, by \autoref{thm:main1}, we finish the proof.

\end{proof}

\section{More results and discussions}
In this section we will get more results from \autoref{thm:epsilon}.

\begin{theorem}\label{thm:main3}
Suppose $M^n(n\geq3)$ be a closed CSL submanifold of $\mathbb{S}^{2n+1}$ and
\begin{align*}
\abs{\mathbf{B}}^2\leq
\begin{cases}
\frac{2(n+1)}{3},&3\leq n\leq 16,\\
2\left(\sqrt{3n-2}-1\right)& n\geq17.
\end{cases}
\end{align*}
\begin{enumerate}
\item If $3\leq n\leq 16$, then $M$ is the totally geodesic Legendrian immersion.
\item If $n\geq17$, then $M$ is either the totally geodesic Legendrian immersion or the Calabi product Legendrian immersion of the totally geodesic $\psi:\mathbb{S}^{n-1}\To\mathbb{S}^{2n-1}$ and a point.
\end{enumerate}
\end{theorem}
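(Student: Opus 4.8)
The plan is to deduce everything from the two rigidity results already in hand, \autoref{thm:epsilon} and \autoref{thm:main1}, once the second fundamental form of the Calabi product examples is understood. The key preliminary step is to compute the smallest value of $\abs{\mathbf{B}}^2$ attained by a Calabi product Legendrian immersion of the totally geodesic $\psi:\mathbb{S}^{n-1}\To\mathbb{S}^{2n-1}$ and a point. By the description of the limiting second fundamental form in \autoref{lem:main0}, such an example is governed by constants $\lambda_1,\lambda_2$ with $1+\lambda_1\lambda_2-\lambda_2^2=0$ and $\abs{\mathbf{B}}^2=\lambda_1^2+3(n-1)\lambda_2^2$. Substituting $\lambda_1=\lambda_2-\lambda_2^{-1}$ and setting $t=\lambda_2^2>0$ turns this into $\abs{\mathbf{B}}^2=(3n-2)t+t^{-1}-2$, a strictly convex function of $t$ minimized at $t=(3n-2)^{-1/2}$, where its value is exactly $2(\sqrt{3n-2}-1)$. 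Thus every such Calabi product satisfies $\abs{\mathbf{B}}^2\geq2(\sqrt{3n-2}-1)$. A short comparison shows that $2(\sqrt{3n-2}-1)-\frac{2(n+1)}{3}$ has the sign of $-(n-17)(n-2)$ after clearing the radical, so that $2(\sqrt{3n-2}-1)>\frac{2(n+1)}{3}$ for $3\leq n\leq16$ and $2(\sqrt{3n-2}-1)\leq\frac{2(n+1)}{3}$ for $n\geq17$; this dichotomy is precisely what separates the two cases of the theorem.

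For $3\leq n\leq16$ the coefficient $-\frac{n-17}{3(n+3)}$ is nonnegative, so the hypothesis $\abs{\mathbf{B}}^2\leq\frac{2(n+1)}{3}$ already implies the bound required in \autoref{thm:main1}. That theorem forces $M$ to be totally geodesic or, when $n\geq4$, a nonminimal Calabi product immersion. The latter is impossible here, because any such immersion would have $\abs{\mathbf{B}}^2\geq2(\sqrt{3n-2}-1)>\frac{2(n+1)}{3}$ by the comparison above, contradicting the hypothesis. Hence $M$ is totally geodesic.

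For $n\geq17$ I would apply \autoref{thm:epsilon} with the value of $\varepsilon$ that annihilates the coefficient of $\abs{\mathbf{H}}^2$, namely the smaller root $\varepsilon_-$ of $\varepsilon+\varepsilon^{-1}=\frac{2(n^2+3n-2)}{(n-1)(n-2)}$. Since the right-hand side exceeds $2$, one has $\varepsilon_-<1$, putting us in the second alternative of \autoref{thm:epsilon}. The constant term of the pinching bound then collapses to a clean value: using the factorization $(n^2+3n-2)^2-(n-1)^2(n-2)^2=4n^2(3n-2)$ one gets $\sqrt{(\varepsilon_-+\varepsilon_-^{-1})^2-4}=\frac{4n\sqrt{3n-2}}{(n-1)(n-2)}$, whence $\varepsilon_-=\frac{(n^2+3n-2)-2n\sqrt{3n-2}}{(n-1)(n-2)}$ and the constant term $\frac{(n-1)(n+2)}{n}-\frac{(n-1)(n-2)}{n}\varepsilon_-$ simplifies to $2(\sqrt{3n-2}-1)$. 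Therefore the hypothesis $\abs{\mathbf{B}}^2\leq2(\sqrt{3n-2}-1)$ matches exactly the condition of \autoref{thm:epsilon}, which yields that $M$ is either minimal or the stated Calabi product. In the minimal case $\abs{\mathbf{B}}^2\leq2(\sqrt{3n-2}-1)\leq\frac{2(n+1)}{3}$ and $\abs{\mathbf{H}}=0$, so \autoref{thm:main1} applies and forces $M$ to be totally geodesic; together with the Calabi product alternative this gives the claimed dichotomy.

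The main obstacle is the algebraic coincidence underlying the last case: one must verify that the single choice $\varepsilon=\varepsilon_-$ simultaneously makes the $\abs{\mathbf{H}}^2$-coefficient vanish and drives the constant term down to precisely $2(\sqrt{3n-2}-1)$, which is exactly the minimal $\abs{\mathbf{B}}^2$ of the Calabi products computed in the first step. This matching is what makes the pinching sharp, but it rests on the factorization identity above and must be carried out carefully; everything else is a bookkeeping reduction to \autoref{thm:epsilon} and \autoref{thm:main1}, together with the exclusion of the Calabi product by its lower bound on $\abs{\mathbf{B}}^2$ when $3\leq n\leq16$.
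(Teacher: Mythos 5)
Your proof is correct and rests on the same two pillars as the paper's: \autoref{thm:epsilon} with the distinguished value $\varepsilon_-=\frac{(n-\sqrt{3n-2})^2}{(n-1)(n-2)}$ (which is exactly the $\varepsilon$ the paper takes; your identity $(n^2+3n-2)^2-(n-1)^2(n-2)^2=4n^2(3n-2)$ confirms that this is the smaller root annihilating the $\abs{\mathbf{H}}^2$-coefficient and that the constant term collapses to $2(\sqrt{3n-2}-1)$), together with the reduction of the minimal case to \autoref{thm:main1}. For $n\geq17$ the two arguments coincide. For $3\leq n\leq16$ you take a slightly different route: the paper feeds the hypothesis into \autoref{thm:epsilon}, using $\frac{2(n+1)}{3}<2(\sqrt{3n-2}-1)$ in that range, to conclude minimality first, whereas you apply \autoref{thm:main1} directly (legitimate, since the coefficient $-\frac{n-17}{3(n+3)}$ is nonnegative there) and then rule out the nonminimal Calabi product alternative by your computation that every such example has $\abs{\mathbf{B}}^2=(3n-2)t+t^{-1}-2\geq2(\sqrt{3n-2}-1)>\frac{2(n+1)}{3}$. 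That explicit minimization of $\abs{\mathbf{B}}^2$ over the Calabi products is not written out in the paper, which only asserts the comparison of the two constants; making it explicit both justifies the exclusion of the Calabi product and explains why the bound $2(\sqrt{3n-2}-1)$ is sharp for $n\geq17$. Both routes yield the same theorem; yours is marginally more self-contained in the first case.
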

\begin{proof}Take $\varepsilon=\frac{\left(n-\sqrt{3n-2}\right)^2}{(n-1)(n-2)}$ as in \autoref{thm:epsilon}. Notice that $\varepsilon>0$ since $n\geq3$ and $\frac{2(n+1)}{3}<2\left(\sqrt{3n-2}-1\right)$ when $3\leq n\leq16$ we see that $\abs{\mathbf{B}}^2\leq \frac{2(n+1)}{3}<2\left(\sqrt{3n-2}-1\right)$, which implies that $M$ is minimal and hence totally geodesic by the same argument with \autoref{thm:main1}. When $n\geq17$, if $M$ is minimal, then $\abs{\mathbf{B}}^2\leq2\left(\sqrt{3n-2}-1\right)\leq\frac{2(n+1)}{3}$ we get $M$ is totally geodesic by the same argument with \autoref{thm:main1} again. Therefore we complete the proof.
\end{proof}

\begin{theorem}If $M^n(n\geq3)$ be a closed CSL submanifold of $\mathbb{S}^{2n+1}$ and
\begin{align*}
\abs{\mathbf{B}}^2\leq2+\dfrac{3}{n+1}\abs{\mathbf{H}}^2.
\end{align*}
Then $M$ is the totally geodesic Legendrian immersion.
\end{theorem}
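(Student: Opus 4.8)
The plan is to obtain this theorem as an immediate corollary of \autoref{thm:epsilon} by making the unique optimal choice of the free parameter $\varepsilon$, and then finishing with the minimal Simons argument already employed in the proof of \autoref{thm:main1}. In other words, no new machinery is needed; the whole point is to identify the value of $\varepsilon$ for which the hypothesis of \autoref{thm:epsilon} reproduces, on the nose, the assumed bound $\abs{\mathbf{B}}^2\leq 2+\frac{3}{n+1}\abs{\mathbf{H}}^2$, and to check that this value lies in the regime $\varepsilon\geq1$.

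First I would match the coefficient of $\abs{\mathbf{H}}^2$. Setting $\dfrac{(n-1)(n-2)}{4n^2}\left(\varepsilon+\varepsilon^{-1}\right)=\dfrac{n^2+3n-2}{2n^2}-\dfrac{3}{n+1}$ and simplifying the right-hand side to $\dfrac{(n-2)(n^2+1)}{2n^2(n+1)}$, one is led to the single condition $\varepsilon+\varepsilon^{-1}=\dfrac{2(n^2+1)}{n^2-1}$, whose two (reciprocal) roots are $\varepsilon=\dfrac{n+1}{n-1}$ and $\varepsilon=\dfrac{n-1}{n+1}$. I would take $\varepsilon=\dfrac{n+1}{n-1}$ and then verify the constant term: $\dfrac{(n-1)(n+2)}{n}-\dfrac{(n-1)(n-2)}{n}\cdot\dfrac{n+1}{n-1}=\dfrac{n-1}{n}\bigl[(n+2)-(n-2)\tfrac{n+1}{n-1}\bigr]=\dfrac{n-1}{n}\cdot\dfrac{2n}{n-1}=2$. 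Thus, with this $\varepsilon$, the hypothesis of \autoref{thm:epsilon} is exactly $\abs{\mathbf{B}}^2\leq 2+\frac{3}{n+1}\abs{\mathbf{H}}^2$, i.e.\ precisely our assumption.

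Since $n\geq3$ forces $\varepsilon=\dfrac{n+1}{n-1}>1$, part (1) of \autoref{thm:epsilon} applies and shows that $M$ must be a minimal Legendrian immersion (the strictness built into Young's inequality for $\varepsilon\geq1$ is what excludes the Calabi product and yields minimality outright). In the minimal case the assumption becomes $\abs{\mathbf{B}}^2=\abs{\sigma}^2\leq 2$, and because $2<\frac{2(n+1)}{3}$ for every $n\geq3$, I would conclude exactly as in \autoref{thm:main1}: the minimal Simons inequality $\frac12\Delta\abs{\sigma}^2\geq\abs{\nabla\sigma}^2+\abs{\sigma}^2\bigl((n+1)-\frac32\abs{\sigma}^2\bigr)$ together with $\abs{\sigma}^2\leq2$ gives $\frac12\Delta\abs{\sigma}^2\geq\abs{\nabla\sigma}^2+(n-2)\abs{\sigma}^2\geq0$. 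Integrating over the closed manifold $M$ and using $n-2>0$ forces $\int_M\abs{\sigma}^2=0$, hence $\sigma\equiv0$ and $M$ is totally geodesic.

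The only real content is the arithmetic verification that the single choice $\varepsilon=\frac{n+1}{n-1}$ simultaneously matches both the constant and the $\abs{\mathbf{H}}^2$-coefficient; I expect no genuine obstacle beyond confirming these two identities. The one subtlety worth flagging is that one must land in the range $\varepsilon\geq1$, so that \autoref{thm:epsilon}(1) delivers minimality directly rather than leaving open the Calabi-product alternative of the $0<\varepsilon<1$ case; fortunately $\frac{n+1}{n-1}>1$ for all $n\geq3$, and the residual minimal pinching $\abs{\sigma}^2\leq 2$ is strictly below the threshold $\frac{2(n+1)}{3}$, so the final step produces totally geodesic directly with room to spare.
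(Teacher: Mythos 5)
Your proposal is correct and follows essentially the same route as the paper: take $\varepsilon=\frac{n+1}{n-1}>1$ in \autoref{thm:epsilon} to get minimality with $\abs{\mathbf{B}}^2\leq 2\leq\frac{2(n+1)}{3}$, then conclude via the minimal Simons inequality as in \autoref{thm:main1}. Your final step is in fact a slight streamlining — since $\abs{\sigma}^2\leq 2$ is strictly below $\frac{2(n+1)}{3}$, integrating $\frac12\Delta\abs{\sigma}^2\geq\abs{\nabla\sigma}^2+(n-2)\abs{\sigma}^2$ kills $\sigma$ directly without appealing to the borderline classification of Chen--Xu/Li--Li — but this is the same argument in substance.
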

\begin{proof}Take $\varepsilon=\frac{n+1}{n-1}$ as in \autoref{thm:epsilon}. Notice that $\varepsilon>1$ and hence $M$ is minimal. Therefore $\abs{\mathbf{B}}^2\leq 2\leq\frac{2(n+1)}{3}$. Then by a similar argument to \autoref{thm:main1} we complete the proof.
\end{proof}

\begin{rem}Under the assumption $n\geq4$ and
\begin{align*}
\abs{\mathbf{B}}^2<2+\dfrac{3}{n+3/2}\abs{\mathbf{H}}^2,
\end{align*}
Li-Wang (\cite{LW}) proved that the closed simply connected Legendrian submanifold $M^n$ in $\mathbb{S}^{2n+1}$ must be diffemorphic to $\mathbb{S}^n$. Under the assumption
\begin{align*}
\abs{\mathbf{B}}^2<\begin{cases}
6+\frac{3}{n+2/3}\abs{\mathbf{H}}^2,&n\geq5,\\
6+\frac{3}{4}\abs{\mathbf{H}}^2,&n=4,
\end{cases}
\end{align*}
Sun-Sun (\cite{SS}) proved that closed simply connected Legendrian submanifold $M^n$ in $\mathbb{S}^{2n+1}$ must be a topological sphere.
\end{rem}

At the end of this paper, we list some conjectures.
\begin{conj}Let $M^n (n\geq2)$ be a closed minimal Legendrian submanifold in $\mathbb{S}^{2n+1}$ and
\begin{align*}
\abs{\mathbf{B}}^2\leq\dfrac{(n-1)(n+2)}{n},
\end{align*}
then  $M$ is either totally geodesic or a minimal Calabi product Legendrian immersion of a totally geodesic Legendrian immersion and a point.
\end{conj}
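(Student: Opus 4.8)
The plan is to attack the conjecture directly in the minimal setting, since the device used in \autoref{lem:main0} and \autoref{thm:main0}---singling out the preferred direction $e_1=\frac{J\mathbf{H}}{\abs{\mathbf{H}}}$ and controlling only $Ric\left(J\mathbf{H},J\mathbf{H}\right)$---degenerates when $\mathbf{H}=0$. For minimal $M$ the tensor $\sigma_{ijk}=\hin{\mathbf{B}(e_i,e_j)}{Je_k}$ is fully symmetric and trace free, since $\sigma_{ijk}=\sigma_{jik}=\sigma_{ikj}$ and $\mu_j=\sum_i\sigma_{iij}=0$. Writing $A_i\coloneqq(\sigma_{ijk})_{1\leq j,k\leq n}$ and specializing the Simons' identity \eqref{eq:simons} to $\mu=0$ exactly as in the proof of \autoref{thm:main1}, one obtains the Bochner formula
\begin{align*}
\dfrac12\Delta\abs{\sigma}^2=\abs{\nabla\sigma}^2+(n+1)\abs{\sigma}^2-\sum_{i,j}\abs{[A_i,A_j]}^2-\sum_{i,j}\hin{A_i}{A_j}^2.
\end{align*}
So far this is identical to the minimal step already carried out, and the only remaining input is a pointwise control of the quartic curvature term $Q\coloneqq\sum_{i,j}\abs{[A_i,A_j]}^2+\sum_{i,j}\hin{A_i}{A_j}^2$.

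The crux is to upgrade the crude estimate $Q\leq\frac{3}{2}\abs{\sigma}^4$ (which only yields the subcritical gap $\frac{2(n+1)}{3}$) to the \emph{sharp} inequality
\begin{align*}
\sum_{i,j}\abs{[A_i,A_j]}^2+\sum_{i,j}\hin{A_i}{A_j}^2\leq\dfrac{n(n+1)}{(n-1)(n+2)}\abs{\sigma}^4,
\end{align*}
valid for every fully symmetric trace-free $3$-tensor $\sigma$ on $\mathbb{R}^n$, with equality attained precisely by the normal form of the Calabi product. One checks that the constant $\frac{n(n+1)}{(n-1)(n+2)}$ is forced: it is exactly the value for which $\frac{n+1}{c}$ equals the conjectural gap $\frac{(n-1)(n+2)}{n}$, and it is achieved by the appendix example (where $|\sigma|^2$ is constant and $\nabla\sigma=0$, so $Q=(n+1)\abs{\sigma}^2$). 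Granting this inequality, the Bochner formula becomes
\begin{align*}
\dfrac12\Delta\abs{\sigma}^2\geq\abs{\nabla\sigma}^2+(n+1)\abs{\sigma}^2\left(1-\dfrac{n}{(n-1)(n+2)}\abs{\sigma}^2\right),
\end{align*}
so under the pinching hypothesis the second summand is nonnegative; integrating over the closed manifold $M$ kills the left side and forces both $\nabla\sigma\equiv0$ and the dichotomy $\abs{\sigma}^2\equiv0$ or $\abs{\sigma}^2\equiv\frac{(n-1)(n+2)}{n}$.

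In the nontrivial branch one then classifies the extremal configuration. Here $\sigma$ is parallel, and I would diagonalize at a point where $\sigma_{111}$ is maximal among unit directions; the first-order condition gives $\sigma_{11j}=0$ for $j\geq2$, and the rigidity in the sharp algebraic inequality should pin $\sigma$ to the Calabi normal form (the $\mathbf{H}=0$ analogue of \eqref{eq:basic_limit}), after which comparison with the appendix example and closedness of $M$ finish the identification, in the manner of the concluding argument of \autoref{thm:main0} for $n\geq3$. The genuine obstacle is the sharp algebraic inequality together with its equality characterization: unlike the mean-curvature case there is no distinguished direction, so the trace-free full symmetry of $\sigma$ must be exploited globally, and establishing the optimal constant $\frac{n(n+1)}{(n-1)(n+2)}$ while excluding spurious equality configurations is exactly the hard step---this is why the statement remains conjectural rather than a corollary of the methods developed above.
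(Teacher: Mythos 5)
The statement you are trying to prove is stated in the paper only as a conjecture: the authors give no proof for general $n$ and merely observe (after the conjecture) that it follows from \autoref{thm:main0} when $n=2$. So there is no proof in the paper to compare against, and the question is whether your sketch closes the gap. It does not, and you partly acknowledge this yourself; but the problem is worse than "the hard step is left open" --- the key inequality you propose is false.

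Your strategy reduces everything to the pointwise algebraic bound
\begin{align*}
Q\coloneqq\sum_{i,j}\abs{[A_i,A_j]}^2+\sum_{i,j}\hin{A_i}{A_j}^2\leq\dfrac{n(n+1)}{(n-1)(n+2)}\abs{\sigma}^4
\end{align*}
for every fully symmetric trace-free $3$-tensor $\sigma$ on $\mathbb{R}^n$. Checking consistency against the Calabi product normal form does not establish this, and here is a counterexample for every $n\geq3$: take any nonzero trace-free fully symmetric $3$-tensor supported on a $2$-dimensional coordinate subspace, i.e.\ $\sigma_{111}=a=-\sigma_{122}$, $\sigma_{222}=b=-\sigma_{112}$ and all components with an index $\geq3$ equal to zero. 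This tensor is still fully symmetric and trace-free in $\mathbb{R}^n$, and a direct computation (it is exactly the $n=2$ extremal configuration of the flat minimal Legendrian torus) gives $\sum_{i,j}\abs{[A_i,A_j]}^2=\abs{\sigma}^4$ and $\sum_{i,j}\hin{A_i}{A_j}^2=\tfrac12\abs{\sigma}^4$, hence $Q=\tfrac32\abs{\sigma}^4$, whereas your claimed constant $\tfrac{n(n+1)}{(n-1)(n+2)}$ is strictly less than $\tfrac32$ for all $n\geq3$ (e.g.\ $\tfrac65$ for $n=3$). So no pointwise inequality with your constant can hold on the full space of trace-free symmetric $3$-tensors, and the Bochner argument as you set it up cannot yield the conjectural gap $\tfrac{(n-1)(n+2)}{n}$; it only yields bounds at the level of $\tfrac{2(n+1)}{3}$, which is what \autoref{thm:main1} already achieves. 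Any genuine proof would have to rule out such "low-dimensional" configurations by other means --- exploiting the $\abs{\nabla\sigma}^2$ term, integral estimates, or structural information about which tensors actually arise as second fundamental forms --- which is precisely why the statement remains open. The remainder of your sketch (integration, parallelism of $\sigma$, classification of the extremal case) is conditional on this false lemma and therefore does not constitute a proof.
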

This conjecture is equivalent to
\begin{conj}
If $M^n (n\geq2)$ is a closed contact stationary Legendrian  submanifold of $\mathbb{S}^{2n+1}$ and
\begin{align*}
\abs{\mathbf{B}}^2\leq\dfrac{(n-1)(n+2)}{n}+\dfrac{n^2+3n-2}{2n^2}\abs{\mathbf{H}}^2-\dfrac{(n-1)(n-2)\abs{\mathbf{H}}\sqrt{4n+\abs{\mathbf{H}}^2}}{2n^2}.
\end{align*}
Then $M$ is either totally geodesic or a Calabi product Legendrian immersion of a totally geodesic Legendrian immersion and a point.
\end{conj}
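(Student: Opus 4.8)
The plan is to reduce the conjecture, by means of the machinery already assembled, to a single pinching problem for \emph{minimal} Legendrian submanifolds, and then to attack that problem with a sharp Simons-type integral estimate. First I would note that the hypothesis on $\abs{\mathbf{B}}^2$ is precisely the pointwise bound \eqref{eq:basic1}, so \autoref{lem:main0} applies at every point and gives $Ric\left(J\mathbf{H},J\mathbf{H}\right)\geq0$ throughout $M$. Integrating the Bochner formula of \autoref{lem:bochner} over the closed manifold annihilates the left-hand side, whence $\abs{\nabla\left(J\mathbf{H}\right)}\equiv0$ and $Ric\left(J\mathbf{H},J\mathbf{H}\right)\equiv0$. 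In particular $\abs{\mathbf{H}}$ is constant, so either $\mathbf{H}\equiv0$ or $\mathbf{H}\neq0$ everywhere. In the latter case the equality clause of \autoref{lem:main0} pins the second fundamental form down to the normal form \eqref{eq:basic_limit}, and the Simons computation carried out in the proof of \autoref{thm:main0} forces $\lambda_1,\lambda_2$ to be constant with $1+\lambda_1\lambda_2-\lambda_2^2=0$, so $M$ is a Calabi product. Thus the conjecture is equivalent to \autoref{thm:main0} supplemented by a resolution of the minimal case; for $n=2$ this minimal case is already settled by \autoref{legendrian mimimal}, so the genuinely open content is the minimal case in dimension $n\geq3$.

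Next I would isolate that open statement (the equivalent first conjecture): a closed \emph{minimal} Legendrian submanifold $M^n$ $(n\geq3)$ with $\abs{\mathbf{B}}^2\leq\frac{(n-1)(n+2)}{n}$ must be totally geodesic or a minimal Calabi product. For this I would invoke the Simons identity \eqref{eq:simons} with $\mu\equiv0$, which after tracing yields
\begin{align*}
\dfrac12\Delta\abs{\sigma}^2=\abs{\nabla\sigma}^2+(n+1)\abs{\sigma}^2-\sum_{i,j}\abs{[A_i,A_j]}^2-\sum_{i,j}\hin{A_i}{A_j}^2,
\end{align*}
where the matrices $A_i=\left(\sigma_{ijk}\right)_{j,k}$ are now symmetric, trace free, and carry the extra \emph{full} symmetry $(A_i)_{jk}=(A_j)_{ik}=(A_k)_{ij}$ inherited from $\sigma_{ijk}=\sigma_{jik}=\sigma_{ikj}$. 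Integrating over the closed $M$ kills the left-hand side, so the whole problem reduces to proving the sharp quartic inequality
\begin{align*}
\sum_{i,j}\abs{[A_i,A_j]}^2+\sum_{i,j}\hin{A_i}{A_j}^2\leq(n+1)\abs{\sigma}^2\qquad\text{whenever}\qquad\abs{\sigma}^2\leq\dfrac{(n-1)(n+2)}{n},
\end{align*}
together with an analysis of its equality case.

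The hard part is exactly this quartic estimate. The threshold $\frac{(n-1)(n+2)}{n}=n+1-\frac2n$ grows linearly in $n$, whereas the only bound on the quartic term used so far, namely $\sum_{i,j}\abs{[A_i,A_j]}^2+\sum_{i,j}\hin{A_i}{A_j}^2\leq\frac32\abs{\sigma}^4$ exploited in \autoref{thm:main1} and \autoref{thm:main3}, only delivers rigidity up to $\abs{\sigma}^2\leq\frac{2(n+1)}{3}$. For large $n$ the conjectured constant therefore lies far beyond the reach of the naive commutator estimate, and the extremal example---the minimal Calabi product, for which $\mathbf{H}=0$, $n\lambda_2^2=1$, and hence $\abs{\mathbf{B}}^2=\frac{(n-1)(n+2)}{n}$ exactly---shows the constant cannot be improved. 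What is needed is a genuinely sharper pointwise estimate of the quartic form that uses the total symmetry and trace-freeness of $\sigma$ (a constraint strictly stronger than symmetry of each $A_i$ separately), with the minimal Calabi configuration as its unique extremizer.

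I would attempt this by diagonalizing at a point along the direction that maximizes the cubic form $\sigma(X,X,X)$ over the unit sphere of $T_pM$, as in the surface computation of \autoref{prop1}, thereby recasting the quartic estimate as an algebraic optimization over totally symmetric trace-free $3$-tensors; the equality case should then reproduce the splitting \eqref{eq:basic_limit} in the minimal limit $\mathbf{H}=0$ and, after the moving-frame classification used in the proof of \autoref{thm:main0}, identify $M$ with the minimal Calabi product. Forcing this algebraic optimization to yield precisely the constant $n+1-\frac2n$ with the correct rigidity characterization is the step I expect to be the genuine obstacle, and its absence is exactly why the statement is left as a conjecture here.
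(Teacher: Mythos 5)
The statement you are addressing is not proved in the paper at all: it appears there as an open conjecture, accompanied only by the remark that it is equivalent to the minimal-case conjecture preceding it and that it holds for $n=2$ as a consequence of \autoref{thm:main0}. Your reduction is faithful to that remark and is correct as far as it goes: the hypothesis is exactly \eqref{eq:basic1}, so \autoref{lem:main0} gives $Ric\left(J\mathbf{H},J\mathbf{H}\right)\geq0$, integrating \autoref{lem:bochner} yields $\nabla\left(J\mathbf{H}\right)\equiv0$ and $Ric\left(J\mathbf{H},J\mathbf{H}\right)\equiv0$, and in the non-minimal case the equality analysis of \eqref{eq:basic_limit} together with the Simons computation from the proof of \autoref{thm:main0} forces $\lambda_1,\lambda_2$ to be constants with $1+\lambda_1\lambda_2-\lambda_2^2=0$, identifying $M$ as a Calabi product. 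You have also correctly located the extremal minimal Calabi product with $\abs{\mathbf{B}}^2=\frac{(n-1)(n+2)}{n}$, confirming the constant is sharp.

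The genuine gap is the one you yourself flag: the minimal case for $n\geq3$. After the reduction, what remains is to show that a closed minimal Legendrian $M^n$ with $\abs{\sigma}^2\leq\frac{(n-1)(n+2)}{n}=n+1-\frac2n$ is totally geodesic or a minimal Calabi product, and for this the only tool available in the paper is the estimate $\sum_{i,j}\abs{[A_i,A_j]}^2+\sum_{i,j}\hin{A_i}{A_j}^2\leq\frac32\abs{\sigma}^4$, which only reaches the threshold $\frac{2(n+1)}{3}$ used in \autoref{thm:main1} and \autoref{thm:main3}. Your proposed remedy --- an algebraic optimization over totally symmetric trace-free $3$-tensors, diagonalizing along the maximizer of the cubic form $\sigma(X,X,X)$ as in \autoref{prop1} --- is a plausible strategy but is only a program: no inequality is actually established, no equality case is analyzed, and the linear-in-$n$ growth of the target constant versus the quadratic growth of the naive bound means the missing estimate must exploit the full symmetry of $\sigma$ in an essential and currently unavailable way. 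So your submission does not prove the statement; it correctly re-derives the paper's own reduction and then stops exactly where the paper stops, at the open minimal pinching problem.
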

From \autoref{thm:main0}, we know that this conjecture is true for $n=2$.

For the first gap of the length of fundamental form of CSL submanifolds in the unit sphere, motived by \autoref{thm:main3}, we list a conjecture.
\begin{conj}If $M^n (n\geq2)$ be a closed  contact stationary Legendrian  submanifold of $\mathbb{S}^{2n+1}$ and
\begin{align*}
\abs{\mathbf{B}}^2\leq 2\left(\sqrt{3n-2}-1\right),
\end{align*}
then $M$ is either totally geodesic or a Calabi product Legendrian immersion of a totally geodesic Legendrian immersion and a point.
\end{conj}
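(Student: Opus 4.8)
The plan is to route the constant pinching of the conjecture through \autoref{thm:epsilon}, thereby reducing the whole statement to a gap theorem for \emph{minimal} Legendrian submanifolds, and then either to invoke or to sharpen the Simons estimate in the minimal regime. The reduction itself is the one already used for \autoref{thm:main3}, but it is worth isolating why it produces the exact constant $2(\sqrt{3n-2}-1)$.

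For $n\geq3$ I apply \autoref{thm:epsilon} with the distinguished value
\begin{align*}
\varepsilon=\dfrac{n-\sqrt{3n-2}}{n+\sqrt{3n-2}}\in(0,1).
\end{align*}
Writing the right-hand side of the hypothesis of \autoref{thm:epsilon} as $C_0(\varepsilon)+C_1(\varepsilon)\abs{\mathbf{H}}^2$ and using $(n-\sqrt{3n-2})(n+\sqrt{3n-2})=(n-1)(n-2)$, one gets $\varepsilon+\varepsilon^{-1}=\frac{2(n^2+3n-2)}{(n-1)(n-2)}$, whence a short computation gives
\begin{align*}
C_0(\varepsilon)=2\left(\sqrt{3n-2}-1\right),\qquad C_1(\varepsilon)=0.
\end{align*}
Thus the hypothesis of \autoref{thm:epsilon} is \emph{exactly} $\abs{\mathbf{B}}^2\leq2(\sqrt{3n-2}-1)$, with the $\abs{\mathbf{H}}^2$ term dropping out, and since $0<\varepsilon<1$, part (2) of \autoref{thm:epsilon} gives that $M$ is either minimal or the Calabi product Legendrian immersion of the totally geodesic $\psi:\mathbb{S}^{n-1}\To\mathbb{S}^{2n-1}$ and a point. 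The latter is precisely the exceptional immersion allowed in the conclusion: along the one-parameter Calabi family one has $\abs{\mathbf{B}}^2=(3n-2)t^2+t^{-2}-2$, minimized exactly at $2(\sqrt{3n-2}-1)$, so the pinching forces the saturating member.

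It remains to show that a closed \emph{minimal} Legendrian $M^n$ with $\abs{\mathbf{B}}^2\leq2(\sqrt{3n-2}-1)$ is totally geodesic. For $n=2$ the bound reads $\abs{\mathbf{B}}^2\leq2$ and the claim is \autoref{thm:Luo}. For $n\geq17$ one has $2(\sqrt{3n-2}-1)\leq\frac{2(n+1)}{3}$, so the minimal Simons inequality used for \autoref{thm:main1},
\begin{align*}
\dfrac{1}{2}\Delta\abs{\sigma}^2\geq\abs{\nabla\sigma}^2+\abs{\sigma}^2\left((n+1)-\dfrac{3}{2}\abs{\sigma}^2\right),
\end{align*}
integrates over the closed manifold to force $\sigma\equiv0$; this is \autoref{thm:main3}. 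The genuinely open range is $3\leq n\leq16$, where
\begin{align*}
\dfrac{2(n+1)}{3}<2\left(\sqrt{3n-2}-1\right)<\dfrac{(n-1)(n+2)}{n},
\end{align*}
the last inequality reducing, after clearing the root, to $((n-1)(n-2))^2>0$. In this window the crude quartic bound $\frac{3}{2}\abs{\sigma}^4$ is too lossy, since it only closes the argument up to $\frac{2(n+1)}{3}$.

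The main obstacle is therefore exactly this minimal pinching for $3\leq n\leq16$. I would attack it by replacing $\frac{3}{2}\abs{\sigma}^4$ with a sharper inequality that exploits the \emph{total} symmetry $\sigma_{ijk}=\sigma_{jik}=\sigma_{ikj}$ special to Legendrian submanifolds: the matrices $A_i=(\sigma_{ijk})_{j,k}$ are then simultaneously symmetric and tightly coupled, so that $\sum_{i,j}\abs{[A_i,A_j]}^2+\sum_{i,j}\hin{A_i}{A_j}^2$ should admit a bound strictly better than $\frac{3}{2}\abs{\sigma}^4$, pushing the admissible threshold up to $2(\sqrt{3n-2}-1)$ and ideally to $\frac{(n-1)(n+2)}{n}$. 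This is nothing but the minimal-Legendrian analogue of the Chern gap problem recorded as the first conjecture above, and the difficulty is that the optimal pinching inequality for symmetric $3$-tensors in this intermediate regime is simply not known; a proof here would settle both conjectures together. A complementary, more rigid route for fixed small $n$ would be to use \autoref{lem:bochner}, the Simons identity \eqref{eq:simons}, and the pointwise structure \eqref{eq:basic_limit} from \autoref{lem:main0} to show that the pinching forces $\nabla\sigma\equiv0$ and then to classify the resulting parallel models, but making such an argument uniform over $3\leq n\leq16$ is where the real work lies.
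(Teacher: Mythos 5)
You have not proved the statement, and you say so yourself; this is the honest outcome, because the statement is recorded in the paper as a \emph{conjecture}, not a theorem: the paper establishes it only for $n=2$ (via \autoref{thm:Luo}) and for $n\geq17$ (via \autoref{thm:main3}), and explicitly leaves $3\leq n\leq16$ open. Your reduction is exactly the paper's. Your choice $\varepsilon=\frac{n-\sqrt{3n-2}}{n+\sqrt{3n-2}}$ coincides with the paper's $\varepsilon=\frac{(n-\sqrt{3n-2})^2}{(n-1)(n-2)}$ because $(n-\sqrt{3n-2})(n+\sqrt{3n-2})=(n-1)(n-2)$; your computation that the coefficient of $\abs{\mathbf{H}}^2$ vanishes while the constant term becomes $2(\sqrt{3n-2}-1)$ is correct; and the identification of this constant as the minimum of $\abs{\mathbf{\tilde B}}^2=(3n-2)t^2+t^{-2}-2$ over the Calabi product family is a correct and illuminating consistency check. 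The treatment of the nonminimal branch via part (2) of \autoref{thm:epsilon}, of $n=2$ via \autoref{thm:Luo}, and of $n\geq17$ via the Simons inequality all match the paper.

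The genuine gap is the one you name: for a closed \emph{minimal} Legendrian $M^n$ with $3\leq n\leq16$, the bound $2(\sqrt{3n-2}-1)$ strictly exceeds $\frac{2(n+1)}{3}$, so the estimate $\sum_{i,j}\abs{[A_i,A_j]}^2+\sum_{i,j}\hin{A_i}{A_j}^2\leq\frac{3}{2}\abs{\sigma}^4$ no longer closes the Simons argument, and your proposed remedy --- a sharper algebraic inequality exploiting the total symmetry $\sigma_{ijk}=\sigma_{jik}=\sigma_{ikj}$ --- is a program rather than an estimate: no such inequality is stated, let alone proved, and this intermediate range is precisely the content of the conjecture beyond what \autoref{thm:main0} and \autoref{thm:main3} already give. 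The proposal should therefore be read as a correct reassembly of the paper's partial results together with an accurate diagnosis of what remains open, not as a proof of the conjecture.
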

\autoref{thm:Luo} and \autoref{thm:main3} claims that this conjecture is true for $n=2$ and $n\geq17$ respectively.

\appendix
\section{Examples}
\subsection{Calabi tori}
For every four nonzero real numbers $r_1, r_2, r_3, r_4$ with $r_1^2+r_2^2=r_3^2+r_4^2=1$, a Calabi torus is CSL surface in $\mathbf{S}^5$ defined as follows.
\begin{align*}
F:\Sigma\coloneqq&\mathbb{S}^1\times\mathbb{S}^{1}\To\mathbb{S}^5,\\
(t,s)\mapsto&\left(r_1r_3\exp\left(\sqrt{-1}\left(\dfrac{r_2}{r_1}t
+\dfrac{r_4}{r_3}s\right)\right),r_1r_4\exp\left(\sqrt{-1}\left(\dfrac{r_2}{r_1}t-\dfrac{r_3}{r_4}s\right)\right),
r_2\exp\left(-\sqrt{-1}\dfrac{r_1}{r_2}t\right)\right).
\end{align*}

Denote
\begin{align*}
\phi_1=\exp\left(\sqrt{-1}\left(\dfrac{r_2}{r_1}t+\dfrac{r_4}{r_3}s\right)\right),\quad\phi_2
=\exp\left(\sqrt{-1}\left(\dfrac{r_2}{r_1}t-\dfrac{r_3}{r_4}s\right)\right),\quad\phi_3=\exp\left(-\sqrt{-1}\dfrac{r_1}{r_2}t\right),
\end{align*}
then $F(t,s)=\left(r_1r_3\phi_1, r_1r_4\phi_2, r_2\phi_3\right)$. Since
\begin{align*}
\dfrac{\partial F}{\partial t}=\left(\sqrt{-1}r_2r_3\phi_1, \sqrt{-1}r_2r_4\phi_2, -\sqrt{-1}r_1\phi_3\right),\\
\dfrac{\partial F}{\partial s}=\left(\sqrt{-1}r_1r_4\phi_1, -\sqrt{-1}r_1r_3\phi_2, 0\right),
\end{align*}
the induced metric in $\Sigma$ is given by
\begin{align*}
g=\dif t^2+r_1^2\dif s^2.
\end{align*}
Let $E_1=\frac{\partial F}{\partial t}, E_2=\frac{1}{r_1}\frac{\partial F}{\partial s}$, then $\set{E_1, E_2, \nu_1=\sqrt{-1}E_1, \nu_2=\sqrt{-1}E_2, \mathbf{R}=\sqrt{-1}F}$ is a local orthonormal frames of $\mathbb{S}^5$ such that $\set{E_1,E_2}$ is a local orthonormal tangent frames and $\mathbf{R}$ is the Reeb field. A direct calculation yields
\begin{align*}
\dfrac{\partial\nu_1}{\partial t}=&\left(-\sqrt{-1}\dfrac{r_2^2r_3}{r_1}\phi_1, -\sqrt{-1}\dfrac{r_2^2r_4}{r_1}\phi_2, -\sqrt{-1}\dfrac{r_1^2}{r_2}\phi_3\right),\\
\dfrac{\partial\nu_1}{\partial s}=&\left(-\sqrt{-1}\dfrac{r_2r_3^2}{r_4}\phi_1, \sqrt{-1}\dfrac{r_2r_4^2}{r_3}\phi_2,0\right),\\
\dfrac{\partial\nu_2}{\partial t}=&\left(-\sqrt{-1}\dfrac{r_2r_4}{r_1}\phi_1, \sqrt{-1}\dfrac{r_2r_3}{r_1}\phi_2, 0\right),\\
\dfrac{\partial\nu_2}{\partial s}=&\left(-\sqrt{-1}\dfrac{r_4^2}{r_3}\phi_1,-\sqrt{-1}\dfrac{r_3^2}{r_4}\phi_2, 0\right),\\
\dfrac{\partial\mathbf{R}}{\partial t}=&\left(-r_2r_3\phi_1, -r_2r_4\phi_2, r_1\phi_3\right),\\
\dfrac{\partial\mathbf{R}}{\partial s}=&\left(-r_1r_4\phi_1, r_1r_3\phi_2,0\right).
\end{align*}
Hence,
\begin{align*}
\mathbf{A}^{\nu_1}=&-\Re\hin{\dif F}{\dif \nu_1}=\left(\dfrac{r_2}{r_1}-\dfrac{r_1}{r_2}\right)\dif t^2+r_1r_2\dif s^2,\\
\mathbf{A}^{\nu_2}=&-\Re\hin{\dif F}{\dif \nu_2}=2r_2\dif t\dif s+r_1\left(\dfrac{r_4}{r_3}-\dfrac{r_3}{r_4}\right)\dif s^2,\\
\mathbf{A}^{\mathbf{R}}=&0.
\end{align*}
Thus
\begin{align*}
\mathbf{H}=&\left(\dfrac{2r_2}{r_1}-\dfrac{r_1}{r_2}\right)\nu_1+\dfrac{1}{r_1}\left(\dfrac{r_4}{r_3}-\dfrac{r_3}{r_4}\right)\nu_2.
\end{align*}
Moreover $E_1$ and $E_2$ are two parallel tangent vector field. Under the orthonormal frames $\set{E_1,E_2}$, the second fundament form can be written as follows:
\begin{align*}
\mathbf{A}^{\nu_1}=\begin{pmatrix}\dfrac{r_2}{r_1}-\dfrac{r_1}{r_2}&0\\
0&\dfrac{r_2}{r_1}
\end{pmatrix},\quad \mathbf{A}^{\nu_2}=\begin{pmatrix}0&\dfrac{r_2}{r_1}\\
\dfrac{r_2}{r_1}&\dfrac{1}{r_1}\left(\dfrac{r_4}{r_3}-\dfrac{r_3}{r_4}\right)
\end{pmatrix},\quad\mathbf{A^{R}}=0.
\end{align*}

A direct calculation shows that
\begin{align*}
\kappa=2+\abs{\mathbf{H}}^2-\abs{\mathbf{B}}^2=0.
\end{align*}
It is obvious that $J\mathbf{H}$ is parallel. In particular, $\Sigma$ is CSL.  Moreover, $F$ is a minimal Legendrian surface iff $r_1=\pm\frac{\sqrt{6}}{3}, r_2=\pm\frac{\sqrt{3}}{3}, r_3=r_4=\pm\frac{\sqrt{2}}{2}$. In this case $\abs{\mathbf{B}}^2=2$ and the Gauss curvature of $F$ is 0, i.e. $F$ is a flat minimal Legendrian torus.
\subsection{Calabi product Legendrian immersions}

Let $F=\left(F^1,F^2,\dotsc, F^{n+1}\right):M^n\To\mathbb{S}^{2n+1}\subset\mathbb{C}^{n+1}$ be an isometric immersion. Then $F$ is a Legendrian immersion if and only if
\begin{align*}
\sum_{\alpha}F^{\alpha}_{i}\bar F^{\alpha}=0,\quad\sum_{\alpha}F^{\alpha}_{i}\bar F^{\alpha}_{j}=\sum_{\alpha}F^{\alpha}_{j}\bar F^{\alpha}_{i},\quad\forall i, j.
\end{align*}

Let $\gamma=(\gamma_1,\gamma_2):\mathbb{S}^1\To\mathbb{S}^{3}, t\mapsto\left(r_1\exp\left(\sqrt{-1}\frac{r_2}{r_1}t\right),r_2\exp\left(-\sqrt{-1}\frac{r_1}{r_2}t\right)\right)$ be a Legendre curve where $r_1,r_2$ are two nonzero constants satisfying $r_1^2+r_2^2=1$. Let $F=\left(F^1,F^2,\dotsc,F^n\right):M^{n-1}\To\mathbb{S}^{2n-1}$ be a Legendrian immersion. Then $\tilde F\coloneqq\left(\gamma^1F,\gamma^2\right):\tilde M\coloneqq\mathbb{S}^1\times M\To\mathbb{S}^{2n+1}$ is a Legendrian immersion. We call $\tilde F$ a \emph{Calabi product Legendrian immersion} of $F$ and a point.

The induced metric on $\tilde M$ is given by
\begin{align*}
\tilde g=\dif t^2+r_1^2g,
\end{align*}
where $g$ is the induced metric on $M$. Denote
\begin{align*}
E_1=&\left(\sqrt{-1}r_2\exp\left(\sqrt{-1}\dfrac{r_2}{r_1}t\right)F,-\sqrt{-1}r_1\exp\left(-\sqrt{-1}\dfrac{r_1}{r_2}t\right)\right)=\dif\tilde F\left(\dfrac{\partial}{\partial t}\right),\\
E_j=&\left(\exp\left(\sqrt{-1}\dfrac{r_2}{r_1}t\right)\dif F\left(e_j\right),0\right)=\dfrac{1}{r_1}\dif\tilde F\left(e_j\right),\quad j=2,\dotsc, n,
\end{align*}
where $\set{\dif F(e_j)}_{j=2}^n$ is a local orthonormal frames of  $TM$. We obtain a local orthonormal frames $\set{E_j}_{j=1}^n$ of $T\tilde M$. Then $\set{\nu_j\coloneqq\sqrt{-1}E_j, \sqrt{-1}\tilde F}$ is a local orthonormal frames of the normal bundle $T^{\bot}\tilde M$. A direct calculation yields
\begin{align*}
\mathbf{\tilde A}^{\nu_1}=&-\Re\set{\hin{\dif\tilde F}{\dif\nu_1}}=\left(\dfrac{r_2}{r_1}-\dfrac{r_1}{r_2}\right)\dif t^2+r_1r_2g,\\
\mathbf{\tilde A}^{\nu_j}=&-\Re\set{\hin{\dif\tilde F}{\dif\nu_j}}=r_1\mathbf{A}^{\sqrt{-1}\dif F(e_j)}+\dfrac{r_2}{r_1}\dif t\otimes\left(E_j\right)^{\sharp}+\dfrac{r_2}{r_1}\left(E_j\right)^{\sharp}\otimes\dif t,\quad j=2,\dotsc, n,\\
\mathbf{\tilde A}^{\sqrt{-1}\tilde F}=&0.
\end{align*}
We obtain that
\begin{itemize}
\item $\tilde F$ is CSL iff $F$ is CSL.
\item $\sqrt{-1}\mathbf{\tilde H}$ is parallel iff $\sqrt{-1}\mathbf{H}$ is parallel.
\item $\tilde F$ is minimal iff $F$ is minimal and $\abs{r_1}=\sqrt{\frac{n}{n+1}}$.
\end{itemize}

The second fundamental form can be written  by the matrix form as follows:
\begin{align*}
\mathbf{\tilde A}^{\nu_1}=\begin{pmatrix}\dfrac{r_2}{r_1}-\dfrac{r_1}{r_2}&0\\
0&\dfrac{r_2}{r_1}\mathrm{Id}_{(n-1)\times(n-1)}
\end{pmatrix},\quad\mathbf{\tilde A}^{\nu_j}=\begin{pmatrix}0&\dfrac{r_2}{r_1}\alpha_j^T\\
\dfrac{r_2}{r_1}\alpha_j&\dfrac{1}{r_1}\mathbf{A}^{\sqrt{-1}\dif F(e_j)}
\end{pmatrix},\quad \mathbf{\tilde A^{\sqrt{-1}\tilde F}}=0,\quad j=2,\dotsc, n,
\end{align*}
where
\begin{align*}
\begin{pmatrix}\alpha_2&\alpha_3&\dotsc&\alpha_{n}
\end{pmatrix}=\mathrm{Id}_{(n-1)\times(n-1)}.
\end{align*}
Hence,
\begin{align*}
\mathbf{\tilde{H}}_{\tilde g}=&\left(\dfrac{nr_2}{r_1}-\dfrac{r_1}{r_2}\right)\otimes\nu_1+\dfrac{1}{r_1^2}\mathbf{H},\\
\abs{\mathbf{\tilde{H}}}^2_{\tilde g}=&\left(\dfrac{nr_2}{r_1}-\dfrac{r_1}{r_2}\right)^2+\dfrac{1}{r_1^2}\abs{\mathbf{H}}_{g}^2,\\
\abs{\mathbf{\tilde B}}^2_{\tilde g}=&\left(\dfrac{r_2}{r_1}-\dfrac{r_1}{r_2}\right)^2+(n-1)\left(\dfrac{r_2}{r_1}\right)^2+2(n-1)\left(\dfrac{r_2}{r_1}\right)^2+\dfrac{1}{r_1^2}\abs{\mathbf{B}}_{g}^2.
\end{align*}

When $M$ is totally geodesic, a direct calculation yields
\begin{align*}
\abs{\mathbf{\tilde B}}_{\tilde g}^2\geq\dfrac{(n-1)(n+2)}{n}+\dfrac{n^2+3n-2}{2n^2}\abs{\mathbf{\tilde H}}_{\tilde g}^2-\dfrac{(n-1)(n-2)\abs{\mathbf{\tilde H}_{\tilde g}}\sqrt{4n+\abs{\mathbf{\tilde H}_{\tilde g}}^2}}{2n^2}.
\end{align*}
The equality holds if and only if $\abs{r_1}\leq\sqrt{\frac{n}{n+1}}$ or equivalently $\abs{r_2}\geq\sqrt{\frac{1}{n+1}}$. We also have
\begin{align*}
\abs{\mathbf{\tilde B}}^2_{\tilde g}-\dfrac{3n-2}{n^2}\abs{\mathbf{\tilde H}}_{\tilde g}^2-\dfrac{4(n-1)}{n}=\dfrac{(n-1)(n-2)}{n^2}\dfrac{r_1^2}{r_2^2}.
\end{align*}


\end{document}